\documentclass[10pt]{amsart}
\usepackage[colorlinks]{hyperref}
\usepackage{lineno}
\usepackage{esint}
\usepackage[margin=1in]{geometry}
\usepackage[latin1]{inputenc}
\usepackage{amsfonts}
\usepackage{enumitem}
\usepackage{color}
\usepackage{bm}
\usepackage{textcomp}
\usepackage [spanish,english]{babel}
\usepackage{graphicx,dsfont}
\usepackage{amssymb,amsmath,amsthm}
\usepackage{lineno}

\newtheorem{theorem}{Theorem}[section]
\newtheorem{definition}{Definition}[section]
\newtheorem{proposition}{Proposition}[section]
\newtheorem{lemma}{Lemma}[section]
\newtheorem{corollary}{Corollary}[section]

\newtheorem{remark}{Remark}[section]
\theoremstyle{plain} 

\catcode`\@=11
\makeatletter

\@addtoreset{equation}{section}

\begin{document}


\title[Double-phase evolution problem]
{Global Calder\'on-Zygmund estimates for irregular double-phase evolution problem with non-divergence data}

\author{Rakesh Arora}
\address{Department of Mathematical Sciences, Indian Institute of Technology (IIT-BHU), Varanasi 221005, India}
\email{rakesh.mat@iitbhu.ac.in}

\author{Sergey Shmarev}
\address{Department of Mathematics, University of Oviedo, c/Federico Garc\'{i}a Lorca 18, 33007, Oviedo, Spain}
\email{shmarev@uniovi.es}
\thanks{The first author acknowledges the financial support from the Anusandhan National Research Foundation (ANRF), India, under Grant No. ANRF/ARGM/2025/000272/MTR}

\keywords{Nonlinear parabolic equation, Double-phase, Global Gradient estimates, Second-order regularity}
\subjclass[2020]{35K65, 35K67, 35B65,  35K55, 35K99}

\begin{abstract}
We study irregular double-phase parabolic equations with variable exponents and non-divergence data,
\[
u_t-\operatorname{div} \left(\mathcal{F}(z,\nabla u)\nabla u \right)=f(z),\quad z=(x,t)\in Q_T:=\Omega\times (0,T),
\]
 under the homogeneous Dirichlet boundary conditions. Here,
$\Omega \subset \mathbb{R}^N$, $N \geq 2$, is a bounded domain, $T>0$,
\[
\mathcal{F}(z,\nabla u)=a(z)|\nabla u|^{p(z)-2}  + b(z) |\nabla u |^{q(z)-2}
\]
with given Lipschitz-continuous exponents $p,q$ that satisfy a suitable balance condition. The nonnegative coefficients $a(z), b(z)$ satisfy the inequality
$a(z)+b(z)>0$ in $Q_T$, the space and time derivatives of $a$ and $b$ belong to $L^d(Q_T)$ with some $d$ depending on the data. If
\[
f\in L^\sigma(Q_T) \quad \text{for} \ \sigma \in (2, N+2] \quad \text{and} \quad \mathcal{F}((\cdot,0),\nabla u_0)\,|\nabla u_0|^{r+2}\in L^1(\Omega),
\]
where \(0\le r\le K(N,\sigma,p,q)\) if \(\sigma<N+2\), while \(r\ge0\) is arbitrary if \(\sigma=N+2\), then the problem has a unique strong solution, for which we prove the global transfer of integrability from the initial data and the forcing term to the double-phase flux in the spirit of Calder\'on-Zygmund theory, higher integrability of the gradient, and the second-order space regularity:
\[
\begin{split}
& \text{$\mathcal{F}((\cdot,t),\nabla u(\cdot,t))|\nabla u(\cdot,t)|^{r+2}\in L^1(\Omega)$ for a.e. $t\in (0,T)$},
\\
& \text{$|\nabla u|^{2(\min\{p(z),q(z)\}-1)+r+s}\in L^1(Q_T)$
 for every $s\in\left(0,\frac{4}{N+2}\right)$},
\\
&
\mathcal{F}(z,\nabla u)|\nabla u|^{\frac{r+2}{2}}
\in L^2(0,T;W^{1,2}(\Omega)).
\end{split}
\]
The results improve and complement the results in \cite{Arora-Shmarev-JGA-2026} and extend them to the full range $r \geq 0$.
\end{abstract}

\maketitle

\tableofcontents

\section{Introduction}
In this work, we study the irregular double-phase parabolic problem with non-divergence data
\begin{equation}
\label{eq:main}
\begin{split}
& u_t-\operatorname{div}\left(\mathcal{F}(z,\nabla u)\nabla u\right)= f(z), \quad z=(x,t)\in Q_T=\Omega\times (0,T),
\\
& \text{$u=0$ on $\partial\Omega\times (0,T)$},\qquad \text{$u(x,0)=u_0(x)$ in $\Omega$},
\end{split}
\end{equation}
where $\Omega \subset \mathbb{R}^N$ is a bounded domain with $\partial\Omega\in C^{2+\gamma}$, $\gamma\in (0,1)$, $N \geq 2$, $T>0$, and the flux function $\mathcal{F}$ has the form
\begin{equation}
\label{eq:flux-start}
\mathcal{F}(z,\xi)=a(z) |\xi|^{p(z)-2}+b(z) |\xi|^{q(z)-2}
\end{equation}
with given nonnegative modulating coefficients $a(z)$, $b(z)$ and variable exponents $p(z)$, $q(z)$. It is assumed that
\begin{equation}
\label{eq:coeff}
\begin{split}
&
\text{$\alpha\leq a(z)+b(z)$ in $\overline{Q}_T$ for some constant $\alpha>0$}.
\end{split}
\end{equation}
Equation \eqref{eq:main} belongs to the class of double-phase problems, characterized by the presence of multiple growth regimes governed by the exponents $p(z)$ and $q(z)$, with $z=(x,t)\in Q_T$. The growth of the flux is not uniform throughout the domain but varies according to the modulating coefficients $a(z)$ and $b(z)$, leading to a heterogeneous behavior. In regions where both coefficients are positive, the flux exhibits a mixed growth structure involving both exponents, whereas in regions where either coefficient vanishes, the problem reduces to a single-phase model. Thus, the problem exhibits transitions between mixed and single growth regimes across the interfaces determined by the supports of $a(z)$ and $b(z)$. Such equations naturally arise in nonlinear elasticity \cite{Zhikov-1986, Zhikov-1995}, where the coefficients $a$ and $b$ together with the exponents $p$ and $q$ describe materials with different hardening properties.

A typical example is given by $a=1$, $\operatorname{supp} b \Subset \Omega$, and $p<q$, reflecting the coexistence of distinct material responses and implying that solutions need not belong to $W^{1,q}(\Omega)$. Consequently, the natural functional framework for their study is provided by Musielak-Orlicz-Sobolev spaces; see Subsection \ref{subsec:spaces}.

The interplay between spatially-temporal varying coefficients and nonstandard growth conditions creates substantial analytical difficulties related to existence, regularity estimates, compactness methods and variational convergence. To address these difficulties, it is necessary to impose suitable gap conditions between the exponents $p(z)$ and $q(z)$ (see \cite{DeFilippis-2020, RACSAM-2023}), together with appropriate regularity assumptions on the modulating coefficients $a(z)$ and $b(z)$ (see \cite{Wontae-Kim-JMAA-2025, Wontae-Kim-NoDeA-2024}). Owing to these features, double-phase problems, both in stationary and evolutionary settings, have attracted considerable attention in recent years. We refer to \cite{Mingione-Radulescu-2021} for an overview of recent developments in nonstandard growth problems and nonuniformly elliptic equations and to \cite{Chl-Gw-Gw-Wr-2021} for the functional analytic foundations of Musielak-Orlicz spaces.




A central theme in the regularity theory of nonlinear partial differential equations is the Calder\'on-Zygmund principle, which seeks to transfer the integrability of the data to the gradient of solutions. In the elliptic setting, this theory is by now well developed for equations of $p$-Laplace type. For equations with divergence-form right-hand sides, fundamental contributions were obtained by Iwaniec \cite{Iwaniec-1983} and DiBenedetto and Manfredi \cite{DiBenedetto-Manfredi-AMJ-1993}. For elliptic equations, the case of non-divergence form data can be reduced to the divergence setting through the Bogovski\u{\i}  regularity results. However, such a technique is not applicable for parabolic equations (see \cite{Byun-Kim-2022, {Andrade-Bogelein-Duzaar-Moring-2026}}). We refer the reader to \cite{Kinnunen-Zhou-2001, Byun-Ok-Ryu-2013, Colombo-Mingione-2016, De-Filippis-Mingione-2020} for further developments and refinements of the elliptic Calder\'on-Zygmund theory.

For the model parabolic $p$-Laplace equation
\begin{equation}
\label{main:p-const}
\begin{split}
& u_t-\operatorname{div}\left(|\nabla u|^{p-2} \nabla u\right)
= \operatorname{div}\left(|F|^{p-2} F\right) + f
\quad \text{in} \ Q_T,
\\
& \text{$u=0$ on $\partial\Omega\times (0,T)$},
\qquad
\text{$u(x,0)=u_0(x)$ in $\Omega$},
\end{split}
\end{equation}
local and global Calder\'on-Zygmund type estimates have been the subject of intensive research over the last two decades. In the homogeneous case \(f=0\), Acerbi and Mingione \cite{Acerbi-Mingione-2007}, employing intrinsic scaling techniques and covering arguments, established the local Calder\'on-Zygmund theory for weak solutions to \eqref{main:p-const}. In particular, they proved that
\[
F \in L^q_{\mathrm{loc}}(Q_T)
\quad \Longrightarrow \quad
\nabla u \in L^q_{\mathrm{loc}}(Q_T),
\qquad q \ge p.
\]

Recently, these estimates have been extended to the global setting on the whole cylinder $Q_T$ for the case $f \neq 0$ by Byun and Kim \cite{Byun-Kim-2022} for parabolic equations and subsequently by Andrade, B\"ogelein, Duzaar, and Moring \cite{Andrade-Bogelein-Duzaar-Moring-2026} for parabolic systems. More precisely, they proved that
\[
|\nabla u_0|^q \in L^1(\Omega), \quad |F|^q, |f|^{\frac{(p^\ast)'q}{p}} \in L^1(Q_T)
\quad \Longrightarrow \quad
\nabla u \in L^q(Q_T),
\qquad q \geq p.
\]
where $p^\ast = \frac{p(N+2)}{N}$ is the parabolic conjugate of $p$ and $(p^\ast)'$ is the H\"older conjugate of $p^\ast.$ The proofs are based on comparison estimates, stopping-time arguments, and suitable applications of the Gagliardo-Nirenberg inequality. For further developments of the Calder\'on-Zygmund theory, we refer the reader to Acerbi, Mingione, and Seregin \cite{Seregin-Acerbi-Mignione-2004} for parabolic systems with polynomial growth, Crispo, Maremonti, and R\r{u}\u zi\u cka \cite{Crispo-Maremonti-Ruzicka-2019}, Baroni and B\"ogelein \cite{Baroni-Bogelin-2014} for problems with nonstandard growth, and Byun, Oh, and Wang \cite{Byun-Oh-Wang-2015} for asymptotically regular parabolic equations.

Very recently, Kim \cite{Kim-2025,Wontae-Kim-JMAA-2025} established Calder\'on-Zygmund type estimates for the double-phase problem
\begin{equation}
\label{main:p-q-const}
\begin{split}
& u_t-\operatorname{div}
\left(
|\nabla u|^{p-2}\nabla u
+a(z)|\nabla u|^{q-2}\nabla u
\right) =
\operatorname{div}
\left(
|F|^{p-2}F
+a(z)|F|^{q-2}F
\right)
\qquad \text{in } Q_T,
\\
& \text{$u=0$ on $\partial\Omega\times (0,T)$},
\qquad
\text{$u(x,0)=u_0(x)$ in $\Omega$}.
\end{split}
\end{equation}
Under suitable gap conditions on the exponents $p$ and $q$, together with appropriate regularity assumptions on the coefficient $a$, Kim \cite{Kim-2025,Wontae-Kim-JMAA-2025} exploited the intrinsic geometry associated with the double-phase structure and suitable comparison estimates to prove that
\[
|F|^{p-2}F+a(z)|F|^{q-2}F
\in L^\sigma(Q_T)
\Longrightarrow
|\nabla u|^{p-2}\nabla u
+a(z)|\nabla u|^{q-2}\nabla u
\in L^\sigma(Q_T), \quad \text{for every} \ \sigma\in(0,\infty).
\]
To the best of our knowledge, no analogous Calder\'on-Zygmund type results are currently available for double-phase parabolic equations with non-divergence form data. The present paper addresses this issue in the more general setting of variable exponents and irregular modulating coefficients.

Another major theme of nonlinear parabolic regularity concerns self-improving properties of weak solutions. For equation of the form \eqref{main:p-const}, a fundamental question is whether the natural energy estimate
\[
\nabla u \in L^{p}(Q_T)
\]
can be improved automatically to
\[
\nabla u \in
L^{p + \delta}_{\mathrm{loc}}(Q_T)
\]
for some $\delta>0$. Such higher integrability phenomena play a crucial role in obtaining compactness, Calder\'on-Zygmund estimates and strong convergence results. For variable-growth problems this theory goes back to the pioneering works of Antontsev and Zhikov \cite{Ant-Zhikov-2005} and Zhikov and Pastukhova \cite{Zhikov-Past-2010}, while recent developments include the works of H\"asto and Ok \cite{Hasto-OK-2021}, Kim and S\"arki\"o \cite{Wontae-Kim-NoDeA-2024}, Sen \cite{Sen-2025}, and Arora and Shmarev \cite{Ar-Sh-2021, Ar-Sh-2024, Arora-Shmarev-JMAA-2025}.

For nonlinear parabolic equations, an application of the self-improving properties are second-order estimates of the form
\[
|\nabla u|^{\lambda} \nabla u \in L^2(0,T;W^{1,2}(\Omega))^N,
\]
with $\lambda$ depending on $p$ and $N$, as demonstrated in~\cite{Seregin-Acerbi-Mignione-2004, Duzaar-Mignione-Steffen-2011, Feng-Parviainen-Sarsa-2023, DeFilippis-2020}, while global results are discussed in~\cite{Berselli-Ruzicka-2022}, see also references therein. The global second-order regularity of weak solutions to the Dirichlet problem for $p(z)$-Laplace evolution equation with different choices of the source terms and regularity of initial data was studied by Cianchi-Maz'ya \cite{Cianchi-Maz'ya-2019-1} and Arora-Shmarev \cite{Ar-Sh-2021,Ar-Sh-2024}. These type of estimates are closely connected with the existence theory of strong solutions and constitute one of the cornerstones of modern regularity theory.

In~\cite{Cianchi-Maz'ya-2019-1}, optimal second-order regularity is shown for approximable solutions of the homogeneous Dirichlet problem associated with the constant exponent $p$-Laplace evolution equation, proving that
\[
u_t \in L^2(Q_T), \quad |\nabla u|^{p-2} \nabla u \in L^2(0,T;W^{1,2}(\Omega))^N.
\]
The variable exponent analogue is addressed in~\cite{Ar-Sh-2021}, where Galerkin approximations are employed for $f\in L^2(0,T;W^{1,2}_0(\Omega))$, and $u_0\in W^{1,q(\cdot)}_0(\Omega)$ with $q(x)=\max\{2,p(x,0)\}$, yielding
\[
u_t \in L^2(Q_T), \quad D_{x_i}\left(|\nabla u|^{\frac{p(z)-2}{2}} D_{x_j} u\right) \in L^2(Q_T).
\]
A more flexible regularization framework was developed in~\cite{Ar-Sh-2024}, involving smooth approximations of both the data and the nonlinear terms. Classical solutions to these regularized equations are shown to converge to a weak solution of the original problem. Under the assumptions $p(z) > \frac{2(N+1)}{N+2}$, $u_0 \in W^{1,p(\cdot,0)}_0(\Omega)$, and $f \in L^2(Q_T)$, the limiting solution satisfies
\begin{equation}
\label{eq:approx-L-2}
\begin{split}
& |\nabla u|^{2(p(z)-1)+\delta} \in L^1(Q_T), \quad \text{for any } \delta \in \left(0,\frac{4}{N+2}\right), \\
& |\nabla u|^{p(z)-2} \nabla u \in L^2(0,T;W^{1,2}(\Omega)).
\end{split}
\end{equation}
These properties are derived from uniform estimates for the approximating classical solutions and the pointwise convergence of their gradients. Thus, the results of~\cite{Cianchi-Maz'ya-2019-1} are naturally extended to the variable exponent framework. In \cite{Arora-Shmarev-JMAA-2025}, it is shown that if $a(z) \equiv b(z) \equiv const$ and the data satisfy conditions \eqref{eq:coeff} with $p\equiv q$, then the gradient of the solution to problem \eqref{eq:main} with the nonlinear source $f(z)+F(z,u,\nabla u)$ maintains the initial order of integrability $r\geq \max\{2,p^+\}$, achieves higher integrability, and the solution acquires the second-order regularity:
\begin{equation}\label{eq:prelim-res}
\begin{split}
& \nabla u \in L^{\infty}(0,T;L^r(\Omega)),\quad |\nabla u|^{p(z)+r+\rho-2} \in L^1(Q_T), \ \text{for any} \ \rho \in \left(0,\frac{4}{N+2}\right)
\\
& |\nabla u|^{\frac{p(z)+r}{2}-2}\nabla u\in L^2(0,T;W^{1,2}(\Omega))^N.
\end{split}
\end{equation}
Extending the approach proposed in \cite{Arora-Shmarev-JMAA-2025} to double phase problem \eqref{eq:main}, Arora and Shmarev \cite{Arora-Shmarev-JGA-2026} proved the existence of a strong solution whose gradient maintains the initial order of integrability $r \geq \max\{2, p^+, q^+\}$ and achieves higher integrability, whereas the solution acquires the second-order regularity:
\begin{equation}\label{eq:prelim-res-double-phase}
\begin{split}
& \nabla u \in L^{\infty}(0,T;L^r(\Omega)),\quad |\nabla u|^{\min\{p(z),q(z)\}+r+\rho-2} \in L^1(Q_T),
\\
& |\nabla u|^{\frac{p(z)+r}{2}-2}\nabla u\in L^2(0,T;W^{1,2}(\Omega))^N,
\end{split}
\end{equation}
The aim of this paper is to extend the results of \cite{Arora-Shmarev-JGA-2026} to the full range \(r\ge 0\) and to strengthen several of the regularity conclusions obtained therein. To this end, we work under the following assumptions. We accept the notation
\[
\overline{s}(z)=\max\{p(z),q(z)\}, \qquad \underline{s}(z)=\min\{p(z),q(z)\}, \qquad D_i v=\frac{\partial v}{\partial x_i}, \quad v_t=\frac{\partial v}{\partial t},
\]
and agree to denote $w^+=\max w$, $w^-=\min w$ for the function $w$ on its domain of definition.

We assume that, for a given parameter $\beta \in [0,1]$, the variable exponents $p(z)$, $q(z)$ satisfy the regularity and gap conditions
\begin{equation}
\label{eq:balance-p-q}
\begin{split}
& \text{$p,q\in C^{0,1}(\overline{Q}_T)$ with the Lipschitz constant $L_{p,q}$}, \\
&\frac{2(N+1)}{N+2} < \underline{s}^- \quad \text{and} \quad \begin{cases}
\sup_{Q_T} |p(z)-q(z)| \leq \frac{2\beta}{N+2} \quad \text{if} \ \beta \in [0,1),\\
\sup_{Q_T} |p(z)-q(z)| < \frac{2}{N+2} \quad \text{if} \ \beta =1.
\end{cases}
\end{split}
\end{equation}
We distinguish between the cases $\beta=1$ and $\beta\in (0,1)$ because their study  requires different techniques. On the free term, we assume that
\begin{equation}\label{freeterm:cond}
    \text{$f \in L^\sigma(Q_T)$ for $\sigma > 2$}.
\end{equation}
Furthermore, for a given $r \geq 0$, we assume that 
\begin{equation}\label{initial:inte:cond}
    \int_{\Omega} \left(a(x,0) |\nabla u_0|^{p(x,0) + r} + b(x,0) |\nabla u_0|^{q(x,0) + r}\right) ~dx < +\infty.
\end{equation}
and the modulating coefficient $a(z)$, $b(z)$ satisfy one of the following conditions:
\begin{equation}
\label{eq:reg-data}
\begin{split}
{\rm (a)} \quad & \beta=1,\quad \text{$a$, $b$ are Lipschitz
continuous in $Q_T$ with the Lipschitz constant $L_{a,b}$},
\\
{\rm (b)} \quad & \beta <1, \quad D_i a, \,D_ib,\,a_t,\, b_t\in L^{d}(Q_T) \quad \text{with} \quad d> 2 + \frac{N+2}{2(1-\beta)} \left(\max\{\overline{s}^+, 2(\overline{s}^+-1)\} +r\right)
\end{split}
\end{equation}
In this work, we investigate the existence and global regularity of strong solutions to problem \eqref{eq:main} that inherit the initial data's integrability, in the spirit of Calder\'on-Zygmund regularity theory. In particular, we analyze how the gap condition \eqref{eq:balance-p-q} on the variable exponents $p(z)$ and $q(z)$, together with the integrability assumptions on the datum $f$ in \eqref{freeterm:cond}, governs the propagation of the integrability condition \eqref{initial:inte:cond} from the initial datum $u_0$ to the corresponding strong solution of the irregular double-phase evolution problem \eqref{eq:main}. In addition, we establish self-improving regularity properties of the constructed solution, including higher integrability and enhanced second-order regularity estimates.

The purpose of the present paper is to address these issues. Our analysis combines a regularization procedure with global higher integrability estimates and global second-order regularity estimates. Firstly, we construct a family of classical solutions to suitably regularized problems. The bulk of the work consists of establishing estimates independent of the regularization parameters, which yield global higher integrability of the gradients, transfer of integrability from the initial datum to the solution, and global second-order regularity. These estimates allow one to pass to the limit and obtain a strong solution to the original problem that exhibits the same properties. The approach to these issues follows a strategy similar to that of the paper \cite{Arora-Shmarev-JGA-2026}, but its technical implementation differs due to the choice of the main function space.

\section{Assumptions and results}
In this section, we recall the definition of the function spaces, list the assumptions on the data, and formulate the main results of the work. For a detailed presentation of the function spaces used throughout the text, we refer to \cite{DHHR-2011,HH-2019}.

\subsection{Function spaces}
\label{subsec:spaces}  The solution to problem \eqref{eq:main} will be sought as an element of the variable
Lebesgue and Sobolev spaces. Whereas the generalized Musielak-Orlicz spaces provide a natural analytical framework for double-phase problems, the approach based on the regularization of the equation and the data allows one to reduce the study of the regularized problem with smooth data to dealing with elements
of the variable Lebesgue and Sobolev spaces.
\vspace{0.1cm}\\
\textbf{1) Variable Lebesgue and Sobolev spaces:} Given a function $p:\Omega\mapsto (1,\infty)$, define the function $\rho_p(u)$ (the modular)
\[
\rho_p(u)=\int_{\Omega}|u|^{p(x)}\,dx
\]
and denote by $L^{p(\cdot)}(u)$ the collection of all functions from $L^1(\Omega)$ with $\rho_{p(\cdot)}(u)<\infty$. The set $L^{p(\cdot)}(\Omega)$ equipped with the norm
\[
\|u\|_{p(\cdot),\Omega}=\inf\left\{\lambda>0:\;\rho_{p(\cdot)}\left(\dfrac{u}{\lambda}\right)\leq 1\right\}
\]
is a Banach space. The variable Sobolev space $W^{1,p(\cdot)}_0(\Omega)$ is defined as the set
\[
W^{1,p(\cdot)}_0(\Omega):=\left\{u\in L^{p(\cdot)}(\Omega)\cap W^{1,1}_0(\Omega):\,|\nabla u|\in L^{p(\cdot)}(\Omega)\right\},\quad \|u\|_{W^{1,p(\cdot)}_0(\Omega)}=\|u\|_{p(\cdot)(\Omega)}+\|\nabla u\|_{p(\cdot)(\Omega)}.
\]
Since the functions from $W^{1,p(\cdot)}_0(\Omega)$ with Lipschitz-continuous $p(\cdot)$ satisfy the Poincar\'e inequality,
\[
\|u\|_{p(\cdot)(\Omega)}\leq C\|\nabla u\|_{p(\cdot)(\Omega)},\qquad C=C(p,N,\Omega),
\]
then $\|\nabla u\|_{p(\cdot)(\Omega)}$ is an equivalent norm of $W^{1,p(\cdot)}_0(\Omega)$. The spaces $L^{p(\cdot)}(\Omega)$ and $W^{1,p(\cdot)}(\Omega)$ are reflexive Banach spaces. For the functions defined on the cylinder $Q_T=\Omega\times (0,T)$, we introduce the space
\[
\mathbb{W}_{p(\cdot,\cdot)}(Q_T)=\left\{u: \,u\in L^2(Q_T), |\nabla u|^{p(x,t)}\in L^1(Q_T),\; \text{$u(\cdot,t)=0$ on $\partial\Omega$ for a.e $t\in (0,T)$}\right\},
\]
with the norm
\[
\|u\|_{\mathbb{W}_p}=\|u\|_{2,Q_T}+\|\nabla u\|_{p(\cdot,\cdot),Q_T}.
\]

\textbf{2)  Musielak-Orlicz-Sobolev spaces:} Given functions $a, b:\Omega\mapsto [0,\infty)$, and $p, q:\Omega\mapsto (1,\infty)$, we introduce the $\mathcal{N}$-function $\psi(x,|\xi|)$ and the modular $\mathcal{H}_{p,q}$:
\begin{equation}
\label{eq:psi}
\psi(x,|\xi|)=a(x)|\xi|^{p(x)}+b(x)|\xi|^{q(x)},\qquad \mathcal{H}_{p,q}(u)=\int_{\Omega}\psi(x,|u(x)|)\,dx. 
\end{equation}
Then, the set
\[
L^\psi(\Omega)=\left\{u\in L^1(\Omega):\,\mathcal{H}_{p,q}(u)<\infty\right\}
\]
is the Musielak-Orlicz space. The set $L^\psi(\Omega)$ equipped with the norm
\[
\|u\|_{\psi,\Omega}=\inf\left\{\lambda>0: \mathcal{H}_{p,q}\left(\dfrac{u}{\lambda}\right)\leq 1\right\}
\]
is a Banach space. The Musielak-Orlicz-Sobolev space $\mathcal{W}_0(\Omega)$ is defined as the closure of $C_c^{\infty}(\Omega)$ (smooth functions with compact support in $\Omega$) with respect to the norm
\[
\|u\|_{\mathcal{W}_0(\Omega)}=\|\nabla u\|_{\psi,\Omega}.
\]
%
By $\mathcal{W}_r(\Omega)$, $r\geq 0$, we denote the space generated by the $\mathcal{N}$-function
\[
\mathcal{F}((x,0),|\xi|)|\xi|^{r+2}\equiv a(x,0)|\xi|^{p(x,0)+r}+b(x,0)|\xi|^{q(x,0)+r}.
\]
We will consider problem \eqref{eq:main} with the initial data in the space $\mathcal{W}_r(\Omega)$.
By definition, for every $u_0\in \mathcal{W}_r(\Omega)$ there exists a sequence $\{u_{0m}\}$ such that $u_{0m}\in C_c^{\infty}(\Omega)$ and $u_{0m}\to u_0$ in $\mathcal{W}_r(\Omega)$.

Let $a,b,p,q$ be the coefficients and exponents defined on the cylinder $Q_T$. Assume that $a,b$ satisfy \eqref{eq:coeff}, $p,q$ satisfy \eqref{eq:balance-p-q}, and let $r\geq 0$ be a given number. Define
\[
\mathcal{V}_r(Q_T)=L^\infty(0,T;L^2(\Omega))\cap \left\{u\in L^1(0,T;W^{1,1}_0(\Omega)):\;a(z)|\nabla u|^{p(z)+r}+b(z)|\nabla u|^{q(z)+r}\in L^1(Q_T)\right\}.
\]
By Young's inequality and \eqref{eq:coeff}, for all $z\in \overline{Q}_T$ and $\xi\in \mathbb{R}^N$
\begin{equation}
\label{eq:alpha}
\alpha |\xi|^{\underline{s}(z)}\leq  \left(a(z)+b(z)\right)|\xi|^{\underline{s}(z)}\leq a(z)|\xi|^{p(z)}+b(z)|\xi|^{q(z)}+(a(z)+b(z)),
\end{equation}
therefore
\[
\mathcal{W}_0(\Omega)\subset W^{1,\underline{s}(\cdot,0)}_0(\Omega),\qquad
\text{$\mathcal{V}_r(Q_T)\subset\mathbb{W}_{\underline{s}(\cdot)+r}(Q_T)$ for $r\geq 0$}.
\]

\subsection{Main results}
We begin by introducing the notion of strong solution.
\begin{definition}
\label{def:sol}
A function $u$ is called a strong solution of problem \eqref{eq:main} if
\begin{itemize}
\item[{\rm (i)}] $u\in \mathcal{V}_0(Q_T)$, $u_t\in L^2(\Omega)$,
\item[{\rm (ii)}] for every $\phi\in \mathcal{V}_0(Q_T)$
\begin{equation}
\label{eq:def-1}
\int_{Q_T}\left(u_t\phi+\mathcal{F}(z,\nabla u)\nabla u\cdot \nabla \phi\right)\,dz= \int_{Q_T}f \phi\,dz ,
\end{equation}
\item[{\rm (iii)}] for every $\phi\in L^2(\Omega)$
\[
\int_{\Omega}(u(x,t)-u_0(x))\phi(x)\,dx\to 0\quad \text{as $t\to 0$}.
\]
\end{itemize}
\end{definition}

Denote
\[
K(N, \sigma, p, q):= (\sigma-2) \max\left\{\frac{N}{N+2-\sigma} \left(\underline{s}^--1) + \frac{2}{N(N+2)}\right), \underline{s}^- -1 + \frac{2}{N+2}\right\}
\]
\begin{theorem}
\label{th:1}
Assume that the exponents $p$, $q$, and the coefficients $a$, $b$ satisfy conditions \eqref{eq:coeff}, \eqref{eq:balance-p-q}, \eqref{initial:inte:cond}, and  \eqref{eq:reg-data}. For every $f\in L^{\sigma}(Q_T)$ with $\sigma> 2$ and $u_0$ satisfying \eqref{initial:inte:cond} for
\begin{equation}\label{bound-r-sigma}
\begin{cases}
    0 \leq r \leq K(N, \sigma, p, q) & \text{when} \ \sigma \in (2, N+2),\\
    0 \leq r < +\infty & \text{when} \ \sigma = N+2.
\end{cases}
\end{equation}
problem \eqref{eq:main} has a unique strong solution $u\in \mathcal{V}_r(Q_T)$.
\end{theorem}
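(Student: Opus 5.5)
The plan is to follow the regularization scheme of \cite{Arora-Shmarev-JGA-2026}: construct classical solutions of regularized problems, derive estimates independent of the regularization parameters, and pass to the limit.

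\textbf{Regularization.} For $\epsilon>0$, and for $m$ indexing the data approximation, I would replace:
\begin{itemize}
\item the flux by the nondegenerate $\mathcal{F}_\epsilon(z,\xi)=a_\epsilon(z)(\epsilon^2+|\xi|^2)^{\frac{p(z)-2}{2}}+b_\epsilon(z)(\epsilon^2+|\xi|^2)^{\frac{q(z)-2}{2}}$, with $a_\epsilon,b_\epsilon$ smooth mollifications that preserve \eqref{eq:coeff} and converge in the norms of \eqref{eq:reg-data};
\item $p,q$ by smooth exponents;
\item $f$ by $f_m\in C^\infty_c(Q_T)$ with $f_m\to f$ in $L^\sigma(Q_T)$;
\item $u_0$ by $u_{0m}\in C_c^\infty(\Omega)$ converging in $\mathcal{W}_r(\Omega)$.
\end{itemize}
The regularized problems are uniformly parabolic with smooth data on a $C^{2+\gamma}$ domain, so classical theory gives a unique solution $u_{\epsilon m}\in C^{2+\gamma,1+\gamma/2}(\overline{Q}_T)$.

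\textbf{Uniform estimates.} This step contains the heart of the argument and the main obstacle. It has three parts.
\begin{enumerate}
\item \emph{Energy estimates.} Testing with $u$ and with $u_t$ gives $\sup_t\|u\|_{2,\Omega}$, the $\mathcal{V}_0$-modular bound, and $\|u_t\|_{2,Q_T}\le C$. The estimate for $u_t$ uses $\mathcal{F}(z,\nabla u_0)|\nabla u_0|^2\in L^1$ (the case $r=0$). The time derivatives of $a,b,p,q$ produce terms such as $|a_t|\,|\nabla u|^{p}\log|\nabla u|$. These are absorbed using either the Lipschitz bounds in case (a), or H\"older's inequality with exponent $d$ from \eqref{eq:reg-data}(b) combined with the higher integrability below.
\item \emph{Higher integrability.} A global lemma (of Gagliardo--Nirenberg type, as in \cite{Ar-Sh-2024,Arora-Shmarev-JGA-2026}) bounds $\int_{Q_T}|\nabla u|^{2(\underline{s}-1)+r+s}$, for $s<\frac{4}{N+2}$, by the $L^\infty(L^2)$ norm of $u$ and $\int_{Q_T}\mathcal{F}|\nabla u|^{r}|D^2u|^2$ (second-order term). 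The gap condition \eqref{eq:balance-p-q} lets one phase dominate the other, which replaces $\overline{s}$ by $\underline{s}$ up to the margin $2\beta/(N+2)$.
\item \emph{Estimate of order $r$.} Differentiate the equation in $x_k$ and test with $D_k\big(u\,\mathcal{F}_\epsilon(z,\nabla u)^{\cdot}\big)$, more precisely with the multiplier $-\operatorname{div}\big((\epsilon^2+|\nabla u|^2)^{r/2}\nabla u\big)$. This yields
\[
\frac{d}{dt}\int_\Omega \mathcal{F}_\epsilon|\nabla u|^{r+2}\,dx+c\int_\Omega \mathcal{F}_\epsilon|\nabla u|^{r}|D^2u|^2\,dx\le \text{(boundary)}+\text{(lower order)}+\int_\Omega|f|\,|\nabla u|^{r}|D^2u|\,dx .
\]
The boundary integral involves the curvature of $\partial\Omega$. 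It is handled by the standard trace and interpolation argument, which uses $\partial\Omega\in C^{2+\gamma}$.
\end{enumerate}

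\textbf{The $f$-term.} This is the main obstacle. By Young's inequality it reduces to $\int_{Q_T}|f|^2\mathcal{F}^{-1}|\nabla u|^{r}$. By H\"older's inequality with exponents $\sigma/2$ and $\sigma/(\sigma-2)$ it is then bounded by $\|f\|_\sigma^2\big(\int|\nabla u|^{\frac{\sigma}{\sigma-2}(r+2-\underline{s})}\big)^{(\sigma-2)/\sigma}$. The power on the right must be controlled by the higher-integrability exponent $2(\underline{s}-1)+r+s$ with a power strictly less than one, so that the term can be absorbed.

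I would try this first for $\sigma=N+2$, where the restriction on $r$ should disappear. For $\sigma<N+2$, comparing exponents and using parabolic embedding $L^\infty(L^2)\cap L^{\underline s}(W^{1,\underline s})$ should produce precisely the bound $r\le K(N,\sigma,p,q)$, with the two entries of the max in $K$ corresponding to two interpolation regimes. If the exponents do not close, I would iterate in $r$, starting from $r=0$ and increasing it in steps.

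\textbf{Limit and uniqueness.} Gronwall's inequality gives the uniform bounds. Compactness follows from the Aubin--Lions lemma together with the uniform $W^{1,2}$ bounds on $\mathcal{F}|\nabla u|^{r/2}\nabla u$, which give a.e. convergence of the gradients. Passing to the limit in \eqref{eq:def-1} gives a strong solution, and Fatou's lemma gives $u\in\mathcal{V}_r(Q_T)$. Uniqueness follows by testing the difference of two solutions with the difference itself and using the monotonicity of $\xi\mapsto\mathcal{F}(z,\xi)\xi$.
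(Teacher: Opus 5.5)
There is a genuine gap in the order-$r$ estimate, and it comes from your choice of multiplier. With $-\operatorname{div}\bigl(w_\epsilon^{r/2}\nabla u\bigr)$, $w_\epsilon=\epsilon^2+|\nabla u|^2$, the time term is $\frac{1}{r+2}\frac{d}{dt}\int_\Omega w_\epsilon^{\frac{r+2}{2}}\,dx$, not $\frac{d}{dt}\int_\Omega \mathcal{F}_\epsilon|\nabla u|^{r+2}\,dx$, and the dissipation is $\int_\Omega w_\epsilon^{r/2}\mathcal{F}_\epsilon|u_{xx}|^2\,dx$. Your displayed inequality therefore mixes the time term of the multiplier $\operatorname{div}(w_\epsilon^{r/2}\mathcal{F}_\epsilon\nabla u)$ with the dissipation of $\operatorname{div}(w_\epsilon^{r/2}\nabla u)$. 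Moreover, $\int_\Omega|\nabla u_0|^{r+2}\,dx$ is not controlled by \eqref{initial:inte:cond} when $\underline{s}<2$.

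The decisive failure is the $f$-term. Against your dissipation it can only be absorbed at the cost of $\int f^2\mathcal{F}_\epsilon^{-1}w_\epsilon^{r/2}$. Since $\mathcal{F}_\epsilon\le C w_\epsilon^{(\underline{s}-2)/2}$ for $|\nabla u|\le 1$, the weight there satisfies $\mathcal{F}_\epsilon^{-1}w_\epsilon^{r/2}\ge c\, w_\epsilon^{(r+2-\underline{s})/2}$. This equals $c\,\epsilon^{r+2-\underline{s}}\to\infty$ on $\{\nabla u_\epsilon=0\}$ whenever $\underline{s}>2+r$. For the degenerate $p$-Laplacian with $p>2$ this already happens at $r=0$, the level you need for the $u_t$-bound. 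None of your estimates controls a negative power of $|\nabla u|$ weighted by $f^2$; your bound $\mathcal{F}^{-1}\le C|\nabla u|^{2-\underline{s}}$ holds only for $|\nabla u|\ge 1$. Your step 2 is also dimensionally off: a dissipation $\int\mathcal{F}|\nabla u|^r|D^2u|^2$ yields the exponent $\underline{s}+r+s$, not $2(\underline{s}-1)+r+s$.

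The missing idea is to put the flux into the multiplier. The paper multiplies the regularized equation by $\operatorname{div}\bigl(w_\epsilon^{r/2}\mathcal{F}_\epsilon(z,\nabla u)\nabla u\bigr)$, and this has three consequences.
\begin{itemize}
\item The time term becomes $\frac{d}{dt}\int_\Omega\bigl(\frac{a_\epsilon}{p+r}w_\epsilon^{\frac{p+r}{2}}+\frac{b_\epsilon}{q+r}w_\epsilon^{\frac{q+r}{2}}\bigr)\,dx$, which matches \eqref{initial:inte:cond}.
\item The Green formula of Lemma \ref{le:Green-1} with $\vec\alpha=\mathcal{F}_\epsilon\nabla u$, $\vec\beta=w_\epsilon^{r/2}\vec\alpha$, together with Lemma \ref{le:pointwise-1} and the boundary estimate of Lemma \ref{le:trace-main}, gives the dissipation $\theta\int_\Omega w_\epsilon^{r/2}\mathcal{F}_\epsilon^2|u_{xx}|^2\,dx$.
\item $|\operatorname{div}(w_\epsilon^{r/2}\mathcal{F}_\epsilon\nabla u)|$ is bounded by $C w_\epsilon^{r/2}\mathcal{F}_\epsilon|u_{xx}|$ plus terms with $\nabla a,\nabla b,\nabla p,\nabla q$. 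Hence the $f$-term costs only $C\int f^2w_\epsilon^{r/2}$, with no negative power of $\mathcal{F}_\epsilon$.
\end{itemize}
It is this $\mathcal{F}^2$-dissipation that produces the exponent $2(\underline{s}-1)+r+s$, via Lemma \ref{le:d-r-mod} (interpolation with $p,q$ replaced by $2(p-1),2(q-1)$).

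Even then, $\int_{Q_T}f^2w_\epsilon^{r/2}\,dz$ is not closed by a single exponent count. For $\sigma\ge N+2$, and for the first entry of $K$, the paper applies $W^{1,\frac{2N}{N+2}}(\Omega)\subset L^2(\Omega)$ to $v_\epsilon^{1/2}$ (and to its analogue $s_\epsilon^{1/2}$ in Case II). This gives $\int_\Omega v_\epsilon\,dx\le C\,\Pi_1(t)\Pi_2^{2/N}(t)$, where $\Pi_2$ is the level-$r$ energy $1+\int_\Omega w_\epsilon^{r/2}\mathcal{F}_\epsilon|\nabla u|^2\,dx$ and $\Pi_1$ contains the dissipation. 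After integrating in $t$ and applying Young's inequality with exponents $\frac{N+2}{2},\frac{N+2}{N}$, both pieces are absorbed with small factors. The second entry of $K$ comes instead from $f^2w_\epsilon^{r/2}\le|f|^\sigma+w_\epsilon^{\frac{\sigma r}{2(\sigma-2)}}$ combined with Corollary \ref{cor:d-1-cor-mod}.

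Your limit passage and uniqueness argument are acceptable in outline. Note, however, that Aubin--Lions does not apply to $\mathcal{F}|\nabla u|^{r/2}\nabla u$, because you have no bound on its time derivative. The paper obtains a.e.\ convergence of the gradients from the monotonicity of the flux, testing with $u_\epsilon-u_\mu$.
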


\begin{theorem}
\label{th:2}
Under the conditions of Theorem \ref{th:1}
\[
u\in L^\infty(0,T;\mathcal{W}_r(\Omega)),\qquad \text{$|\nabla u|\in L^{\underline{s}(\cdot)+r+s}(Q_T)$ with every $s\in \left(0,\frac{4}{N+2}\right)$}
\]
and
\begin{equation}
\label{eq:th-2-est}
\begin{split}
\operatorname{ess}\sup_{(0,T)} & \|u(t)\|_{2,\Omega}^2 + \|u_{ t}\|_{2,Q_T}^2+\frac{1}{r+\overline{s}^+}\operatorname{ess}\sup_{(0,T)} \int_{\Omega}|\nabla u|^{r+2} \mathcal{F}(z,\nabla u)\,dx
\\
&
+
\int_{Q_T}|\nabla u|^{2(\underline{s}(z)-1)+r+s}\,dz
\leq C+C'\int_{\Omega}|\nabla u_{0}|^{r+2} \mathcal{F}((x,0),\nabla u_{0})\,dx+\|f\|_{\sigma,Q_T}^\sigma
\end{split}
\end{equation}
with constants $C$, $C'$ depending only on the \textbf{data}.
\end{theorem}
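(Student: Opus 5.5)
The plan is to derive all assertions of Theorem~\ref{th:2} from uniform a priori estimates for classical solutions $u_\epsilon$ of regularized problems, and then pass to the limit exactly as in the construction behind Theorem~\ref{th:1}. The regularized problems use smooth data $u_{0\epsilon}\in C_c^\infty(\Omega)$ and $f_\epsilon\in C^\infty$. The coefficients $a_\epsilon,b_\epsilon$ are mollified and bounded below by $\epsilon$. The flux is replaced by the nondegenerate version $\mathcal{F}_\epsilon(z,\xi)=a_\epsilon(\epsilon^2+|\xi|^2)^{\frac{p-2}{2}}+b_\epsilon(\epsilon^2+|\xi|^2)^{\frac{q-2}{2}}$.

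\textbf{Step 1: basic energy and $u_t$ estimates.} Testing with $u_\epsilon$ gives $\sup_t\|u_\epsilon\|_{2,\Omega}^2$ and the $\mathcal{V}_0$ energy. Testing with $u_{\epsilon t}$ gives $\|u_{\epsilon t}\|_{2,Q_T}^2+\sup_t\int_\Omega\Phi_\epsilon(z,\nabla u_\epsilon)\,dx$, where $\Phi_\epsilon$ is the primitive of the flux. The time derivative of the modular produces terms containing $a_t,b_t$ and $p_t,q_t$. The $p_t,q_t$ terms carry $\log|\nabla u|$ factors, which are absorbed by a slightly higher power of $|\nabla u|$. The $a_t,b_t$ terms are controlled by H\"older's inequality with $a_t,b_t\in L^d$ from \eqref{eq:reg-data}.

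\textbf{Step 2: $r$-integrability of the flux.} I would differentiate the equation in $x_k$, multiply by $D_k u_\epsilon\,|\nabla u_\epsilon|^{r}$ and sum over $k$. Equivalently, I test with $-\operatorname{div}(|\nabla u|^r\nabla u)$. The time term gives $\frac{1}{r+2}\frac{d}{dt}\int_\Omega|\nabla u|^{r+2}$. I prefer to work directly with the weighted quantity $\int_\Omega\mathcal{F}|\nabla u|^{r+2}$, obtained by testing with $-\operatorname{div}(\mathcal{F}|\nabla u|^r\nabla u)$ combined with $u_t$; this yields the factor $\frac{1}{r+\overline{s}^+}$ in \eqref{eq:th-2-est}.

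The elliptic part produces the coercive second-order quantity $\int_{Q_T}\mathcal{F}|\nabla u|^{r}|D^2u|^2$, which is equivalent to $\int|\nabla(\mathcal{F}|\nabla u|^{\frac{r}{2}}\nabla u)|^2$ up to lower-order terms. It also produces error terms involving $\nabla a,\nabla b,\nabla p,\nabla q$. The boundary integrals are handled by the standard Grisvard-type identity on $C^{2+\gamma}$ domains: the trace term involves the curvature times $\mathcal{F}|\nabla u|^{r+2}$ on $\partial\Omega$, and is absorbed through the trace interpolation inequality.

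The source term is $\int f\operatorname{div}(\mathcal{F}|\nabla u|^r\nabla u)$. I bound it by Young's inequality as $\tfrac12\int\mathcal{F}|\nabla u|^r|D^2u|^2+C\int f^2\mathcal{F}|\nabla u|^{r}$. I then split $f^2\mathcal{F}|\nabla u|^r$ with H\"older's inequality using $f\in L^\sigma$.

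\textbf{Step 3: parabolic higher integrability.} I would prove the key lemma, a parabolic Gagliardo--Nirenberg interpolation of the form
\[
\int_{Q_T}|\nabla u|^{2(\underline{s}(z)-1)+r+s}\,dz\le \delta\int_{Q_T}\mathcal{F}|\nabla u|^r|D^2u|^2\,dz + C\Bigl(1+\sup_t\int_\Omega\mathcal{F}|\nabla u|^{r+2}\,dx\Bigr)^{\kappa}
\]
for $s<\frac{4}{N+2}$, obtained from the embedding of $L^\infty L^{r+2}\cap L^2W^{1,2}$ applied to $v=|\nabla u|^{\frac{\underline{s}-2+r}{2}}\nabla u$. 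This single inequality must absorb three kinds of terms:
\begin{itemize}
\item the $f$-term from Step~2, which uses exactly the restriction $r\le K(N,\sigma,p,q)$ (or any $r$ when $\sigma=N+2$);
\item the gap terms, which use $|p-q|<\frac{2\beta}{N+2}$ to dominate $|\nabla u|^{\overline{s}}$ by $|\nabla u|^{\underline{s}+s}$;
\item the coefficient-derivative terms, estimated with $L^d$ and the lower bound on $d$.
\end{itemize}
A Gronwall argument then closes the estimate and gives \eqref{eq:th-2-est} for $u_\epsilon$ uniformly in $\epsilon$.

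\textbf{Step 4: passage to the limit.} Uniform bounds give weak convergence of $u_{\epsilon t}$ and $\mathcal{F}|\nabla u_\epsilon|^{\frac{r}{2}}\nabla u_\epsilon$ in $L^2W^{1,2}$. Combined with the $u_t$ bound, the Aubin--Lions lemma yields a.e. convergence of $\nabla u_\epsilon$. Fatou's lemma and weak lower semicontinuity then transfer every bound to the strong solution $u$ of Theorem~\ref{th:1}, which is unique, so no subsequence issues arise.

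\textbf{Main obstacle.} I expect the hardest step to be the exponent bookkeeping in Step~3, where the $f$-term and the $\nabla a,a_t$ terms must be absorbed simultaneously for small $r$. In particular, when $r<\overline{s}^+-\underline{s}^-$, the weight $\mathcal{F}|\nabla u|^r$ degenerates in the phase where only the smaller exponent is active. I would first try interpolating separately on the sets $\{|\nabla u|\le1\}$ and $\{|\nabla u|>1\}$, with the case $\beta=1$ using Lipschitz coefficients instead of $L^d$.
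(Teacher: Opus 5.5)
\textbf{Verdict: genuine gap.} Your overall architecture matches the paper's: regularization, testing with $\operatorname{div}(w_\epsilon^{r/2}\mathcal{F}_\epsilon\nabla u)$, Grisvard-type control of the boundary integral, and passage to the limit. However, the estimate does not close as you set it up.

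First, a bookkeeping error. Multiplying $\operatorname{div}(\mathcal{F}\nabla u)$ by $\operatorname{div}(\mathcal{F}|\nabla u|^r\nabla u)$ produces the coercive quantity $\int_\Omega w_\epsilon^{r/2}\mathcal{F}_\epsilon^2|u_{xx}|^2\,dx$, with $\mathcal{F}^2$ rather than $\mathcal{F}$. This is consistent with your own remark that it controls $|\nabla(\mathcal{F}|\nabla u|^{r/2}\nabla u)|^2$. Consequently, Young's inequality leaves $\int f^2w_\epsilon^{r/2}$, not $\int f^2\mathcal{F}|\nabla u|^r$, and the boundary term carries $\mathcal{F}^2|\nabla u|^{r+2}$. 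With your weights, the computation that yields the exponent $2(\underline{s}-1)+r+s$ and the bound $K$ does not come out.

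More seriously, your key lemma interpolates against $\sup_t\int_\Omega\mathcal{F}|\nabla u|^{r+2}\,dx$, and this forces a superlinear power $\kappa>1$ in the degenerate range. Already for $p=q=2$, $r=0$, $N=3$, Gagliardo--Nirenberg between $\|\nabla u\|_{2,\Omega}$ and $\|D^2u\|_{2,\Omega}$ gives $\int_\Omega|\nabla u|^{2+s}\,dx\le\delta\|D^2u\|_{2,\Omega}^2+C\|\nabla u\|_{2,\Omega}^{2\kappa}$ with $\kappa=\frac{4-s}{4-3s}>1$, and scaling shows this power cannot be lowered. Set $Y(t)=\int_\Omega\mathcal{F}|\nabla u|^{r+2}\,dx$. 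The resulting inequality $Y'+c\int_\Omega w_\epsilon^{r/2}\mathcal{F}_\epsilon^2|u_{xx}|^2\,dx\le C(1+Y)^\kappa$ gives only local-in-time bounds. So Gronwall cannot produce \eqref{eq:th-2-est} on an arbitrary interval $(0,T)$ with arbitrary data.

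The missing idea is to interpolate against $u$ itself, not against the flux modular. Lemma \ref{le:d-r-mod} and Corollary \ref{cor:d-1-cor-mod} give, at each fixed $t$, $\int_\Omega w_\epsilon^{\frac{2(\underline{s}-1)+r+s}{2}}\,dx\le\delta\int_\Omega w_\epsilon^{\frac r2}\mathcal{F}_\epsilon^2|u_{xx}|^2\,dx+C(\delta,\|u(t)\|_{2,\Omega})$. The Gagliardo--Nirenberg exponent on the weighted Hessian is below one exactly when $s<\frac{4}{N+2}$, which is where that threshold comes from. Since $\sup_t\|u(t)\|_{2,\Omega}$ is already bounded by the basic energy estimate, the gap, logarithmic, $\nabla a$, $a_t$ and boundary terms are all absorbed linearly, with no Gronwall argument.

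The sup of the flux modular enters only through the source term, and there homogeneity is what makes absorption possible. After H\"older in $f$, the embedding $W^{1,\frac{2N}{N+2}}(\Omega)\subset L^2(\Omega)$ applied to $\sqrt{v_\epsilon}$ gives $\int_{Q_T}v_\epsilon\,dz\le C(\sup_t\Pi_2)^{\frac2N}\int_0^T\Pi_1\,dt$. After raising to the H\"older power $\frac{N}{N+2}$ (respectively $\frac{\sigma-2}{\sigma}$), the total degree in the unknowns is exactly one for $\sigma=N+2$ and below one for $\sigma<N+2$. Young's inequality with a small parameter then absorbs it into the left-hand side. The restriction $r\le K$ is precisely what lets $w_\epsilon^{\frac{r\sigma}{2(\sigma-2)}}$ be dominated by the function to which the embedding is applied, or, for small $r$, directly by the interpolation inequality.

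Two smaller points:
\begin{itemize}
\item Your Step 1 bound on $u_t$ needs this higher integrability to control the $a_t$ and $p_t$ terms, so it must come after the second-order estimate with $r=0$.
\item Aubin--Lions yields compactness of $u_\epsilon$, not a.e.\ convergence of $\nabla u_\epsilon$. The paper obtains the latter by testing the difference of two regularized equations with $u_\epsilon-u_\mu$ and using the monotonicity of the double-phase flux.
\end{itemize}
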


\begin{theorem}
\label{th:3}
Let the conditions of Theorem \ref{th:1} be fulfilled and $u(z)$ be the strong solution of problem \eqref{eq:main}. Then
\begin{equation}
\label{eq:reg-nonsmooth-data}
\begin{split}
{\rm (i)} \quad & a(z)|\nabla u|^{p(z)-1+\frac{r}{2}}, \,b(z)|\nabla u|^{q(z)-1+\frac{r}{2}}\in L^2(0,T;W^{1,2}(\Omega)),
\\
{\rm (ii)} \quad & |\nabla u|^{\frac{r}{2}}\mathcal{F}(z,\nabla u)\nabla u\in L^{2}(0,T;W^{1,2}(\Omega))^N,
\end{split}
\end{equation}
and the corresponding norms are bounded by a constant depending only on the \textbf{data}.
\end{theorem}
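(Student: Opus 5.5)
We argue by regularization, following the scheme behind Theorems \ref{th:1} and \ref{th:2}. Let $u_\epsilon$ be the classical solutions of the regularized problems: $\mathcal{F}$ is replaced by $\mathcal{F}_\epsilon(z,\xi)=a_\epsilon(z)(\epsilon^2+|\xi|^2)^{\frac{p_\epsilon(z)-2}{2}}+b_\epsilon(z)(\epsilon^2+|\xi|^2)^{\frac{q_\epsilon(z)-2}{2}}$ with mollified coefficients and exponents, and $f_\epsilon$, $u_{0\epsilon}$ are smooth approximations of $f$ and $u_0$. The goal is a bound on the second-order quantities of $u_\epsilon$ that does not depend on $\epsilon$. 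Two facts carry this over to $u$: the pointwise convergence $\nabla u_\epsilon\to\nabla u$ a.e. in $Q_T$, and the weak lower semicontinuity of the $L^2(0,T;W^{1,2}(\Omega))$ norm. So the whole task is the uniform estimate.

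\textbf{Step 1: the second-order energy identity.} Differentiate the regularized equation in $x_k$, multiply by $D_k u_\epsilon\,(\epsilon^2+|\nabla u_\epsilon|^2)^{r/2}$, sum over $k$, and integrate over $\Omega\times(0,\tau)$. This is the same computation that gives the $\operatorname{ess}\sup$ term in \eqref{eq:th-2-est}; here we keep the dissipative term it produces. Using the structure of $\mathcal{F}_\epsilon$, the main part bounds from above the quantity
\[
\int_{Q_T}\left(a_\epsilon|\nabla u_\epsilon|^{p_\epsilon-2+r}+b_\epsilon|\nabla u_\epsilon|^{q_\epsilon-2+r}\right)|D^2u_\epsilon|^2\,dz .
\]
\textbf{Boundary integral.} This comes from integrating by parts in $x$. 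We control it in the standard way, by the curvature of $\partial\Omega\in C^{2+\gamma}$ together with a trace interpolation inequality, as in \cite{Arora-Shmarev-JGA-2026}.

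\textbf{Lower-order terms.}
\begin{itemize}
\item Terms with $\nabla a_\epsilon$, $\nabla b_\epsilon$ and $\nabla p_\epsilon$, $\nabla q_\epsilon$ carry the factors $|\nabla u|^{\overline{s}-1+r}$, possibly with a logarithm. We absorb them by Young's inequality into the dissipative term, leaving integrals of $|\nabla a|^2|\nabla u|^{\dots}$.
\item In case (a) of \eqref{eq:reg-data} these remainders are bounded directly by the higher integrability from Theorem \ref{th:2}: the gap $<\frac{2}{N+2}$ lets us trade $\overline{s}$ for $\underline{s}+s$.
\item In case (b) we use Hölder's inequality with $d$. The threshold on $d$ in \eqref{eq:reg-data} is exactly what makes the conjugate power of $|\nabla u|$ fall in the range covered by Theorem \ref{th:2}.
\item The source term $\int f\,\operatorname{div}(|\nabla u|^r\nabla u)$ is handled by Hölder's inequality in $L^\sigma$. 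The restriction $r\le K$ is what makes the resulting gradient power admissible.
\end{itemize}
Altogether this gives
\[
\int_{Q_T}\mathcal{F}_\epsilon(z,\nabla u_\epsilon)|\nabla u_\epsilon|^{r}|D^2u_\epsilon|^2\,dz\le C(\text{data}).
\]

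\textbf{Step 2: from the energy bound to the claimed estimates.} A direct computation gives, for the spatial derivative,
\[
\bigl|D_j\bigl(a|\nabla u|^{p-1+\frac r2}\bigr)\bigr|^2\le C\Bigl(a^2|\nabla u|^{2p-4+r}|D^2u|^2+|\nabla a|^2|\nabla u|^{2p-2+r}+a^2|\nabla p|^2|\nabla u|^{2p-2+r}\ln^2|\nabla u|\Bigr).
\]
\begin{itemize}
\item The first term is at most $a\,\mathcal{F}|\nabla u|^{r}|D^2u|^2$, since $a\le C$ and $a|\nabla u|^{p-2}\le\mathcal{F}$, so it is bounded by Step 1.
\item The remaining terms are bounded by Theorem \ref{th:2} in case (a), and by Hölder's inequality with exponent $d$ in case (b). The condition $d>2+\frac{N+2}{2(1-\beta)}(\max\{\overline{s}^+,2(\overline{s}^+-1)\}+r)$ is designed so that $2(\overline{s}^+-1)+r$ combined with the conjugate exponent stays below $\underline{s}+r+\frac{4}{N+2}$, after using $|p-q|\le\frac{2\beta}{N+2}$.
\end{itemize}
The $L^2$ norm of $a|\nabla u|^{p-1+r/2}$ itself is bounded the same way. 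This gives (i). Part (ii) then follows: by the product rule, $|\nabla u|^{r/2}\mathcal{F}\nabla u$ is a combination of the two functions from (i) multiplied by $\nabla u/|\nabla u|$, and its derivatives are again controlled by $\mathcal{F}|\nabla u|^{r}|D^2u|^2$.

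\textbf{Main obstacle.} The hardest part will be the Hölder bookkeeping in case (b). There the terms with $|\nabla a|^2|\nabla u|^{2\overline{s}-2+r}$ must be absorbed using only $L^d$ regularity and the gap condition. The logarithmic factors coming from the variable exponents also need care; I would first absorb them into an arbitrarily small loss in the exponent $s<\frac{4}{N+2}$.
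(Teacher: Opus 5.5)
There is a genuine gap at the passage from Step 1 to Step 2: your energy estimate is one power of $\mathcal{F}$ short. Differentiating in $x_k$ and testing with $w_\epsilon^{r/2}D_ku_\epsilon$ amounts to multiplying the equation by $-\operatorname{div}(w_\epsilon^{r/2}\nabla u_\epsilon)$. As you write, the dissipation this produces is $\int_{Q_T}\mathcal{F}_\epsilon(z,\nabla u_\epsilon)w_\epsilon^{r/2}|D^2u_\epsilon|^2\,dz$.

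But $|D_j(a|\nabla u|^{p-1+\frac r2})|^2$ behaves like $a^2|\nabla u|^{2p-4+r}|D^2u|^2$. From $a\le C$ and $a|\nabla u|^{p-2}\le\mathcal{F}$ you only get $a^2|\nabla u|^{2p-4+r}|D^2u|^2\le C\mathcal{F}|\nabla u|^{p-2+r}|D^2u|^2$, not $C\,a\mathcal{F}|\nabla u|^{r}|D^2u|^2$. The missing factor $|\nabla u|^{p-2}$ is unbounded as $|\nabla u|\to\infty$ if $p>2$, and as $|\nabla u|\to 0$ if $p<2$. So your inequality fails unless $p\equiv 2$. The same problem affects (ii): the derivatives of $|\nabla u|^{r/2}\mathcal{F}\nabla u$ are of size $\mathcal{F}^2|\nabla u|^{r}|D^2u|^2$. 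What Step 1 can deliver is regularity of $|\nabla u|^{\frac{p-2+r}{2}}\nabla u$-type quantities (cf. \eqref{eq:prelim-res}), not the flux regularity claimed in the theorem.

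The wrong multiplier shows up elsewhere too:
\begin{enumerate}
\item Its time part is $\frac{1}{r+2}\frac{d}{dt}\int_\Omega w_\epsilon^{\frac{r+2}{2}}\,dx$. This needs $|\nabla u_0|^{r+2}\in L^1(\Omega)$, which is not implied by \eqref{initial:inte:cond} when $\underline{s}<2$.
\item Young's inequality on the source term leaves $f^2w_\epsilon^{r/2}\mathcal{F}_\epsilon^{-1}$. This is not uniformly controlled where $\mathcal{F}_\epsilon$ degenerates, e.g. $p,q>2$ and $\nabla u\approx 0$.
\item The hypothesis $a_t,b_t\in L^d(Q_T)$ is never used.
\end{enumerate}

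The missing idea is to multiply the regularized equation by the divergence of the weighted flux itself, $\operatorname{div}\bigl(w_\epsilon^{r/2}\mathcal{F}_\epsilon(z,\nabla u)\nabla u\bigr)$. Each term then behaves correctly:
\begin{enumerate}
\item The time part becomes $\frac{d}{dt}\int_\Omega\bigl(\frac{a_\epsilon}{p+r}w_\epsilon^{\frac{p+r}{2}}+\frac{b_\epsilon}{q+r}w_\epsilon^{\frac{q+r}{2}}\bigr)\,dx$ plus terms with $a_t,b_t,p_t,q_t$. This matches \eqref{initial:inte:cond}.
\item The elliptic part $\int_\Omega\operatorname{div}(\mathcal{F}_\epsilon\nabla u)\operatorname{div}(w_\epsilon^{r/2}\mathcal{F}_\epsilon\nabla u)\,dx$ is rewritten by the Green formula of Lemma \ref{le:Green-1}; both fields are normal on $\partial\Omega$ because $u=0$ there. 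It is then bounded below via Lemma \ref{le:pointwise-1} by $(1-\theta)w_\epsilon^{r/2}\mathcal{F}_\epsilon^2|u_{xx}|^2$ up to lower-order terms. The boundary integral is handled by Lemma \ref{le:trace-main}.
\item The source term is bounded by $C\int_\Omega f^2w_\epsilon^{r/2}\,dx+\delta\int_\Omega w_\epsilon^{r/2}\mathcal{F}_\epsilon^2|u_{xx}|^2\,dx$, with no inverse powers of $\mathcal{F}_\epsilon$.
\end{enumerate}

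The substantial work is then bounding $\int_{Q_T}f^2w_\epsilon^{r/2}\,dz$. This uses H\"older's inequality in $L^\sigma$, followed either by Corollary \ref{cor:d-1-cor-mod} or by a Sobolev embedding applied to $v_\epsilon^{1/2}$. Here $v_\epsilon$ is a sum of $a_\epsilon^2$- and $b_\epsilon^2$-weighted powers of $w_\epsilon$. The embedding returns $\sup_t\int_\Omega w_\epsilon^{r/2}\mathcal{F}_\epsilon|\nabla u|^2\,dx$ and $\int_{Q_T}w_\epsilon^{r/2}\mathcal{F}_\epsilon^2|u_{xx}|^2\,dz$ with small coefficients, and this is where the restriction $r\le K(N,\sigma,p,q)$ originates.

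Once $\int_{Q_T}w_\epsilon^{r/2}\mathcal{F}_\epsilon^2|u_{xx}|^2\,dz\le C$ is available, your Step 2 goes through, using $|\nabla w_\epsilon|^2\le 4w_\epsilon|u_{xx}|^2$ and $L^d$ bounds for the $\nabla a,\nabla b$ terms. The passage to the limit then proceeds essentially as in the paper.
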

\begin{remark}
Note that the gradient of the solution enjoys the global higher integrability characterized by two independent parameters, \(r\) and \(s\). While the parameter \(r\) is inherited from the integrability of the initial datum, the parameter \(s\) is generated through interpolation arguments.
\end{remark}
\begin{remark}
    Note that the upper bound $K(N, \sigma, p, q)$ of the parameter $r$ in the case $\sigma \in (2, N+2)$ has the following asymptotics:
    \begin{itemize}
        \item $K(N, \sigma, p, q) \to 0$ as $\sigma \to 2^+$ and $K(N, \sigma, p, q) \to +\infty$ as $\sigma \to (N+2)^-$
        \item As $\sigma \to 2^+$, the higher integrability estimate and second order estimates in Theorem \ref{th:2} and \ref{th:3} coincides with the results in \cite{Arora-Shmarev-JMAA-2025, RACSAM-2023, Ar-Sh-2024}.
    \end{itemize}
\end{remark}

For the convenience of presentation, by \textbf{data} we denote the set of quantities that characterize the structure of equation \eqref{eq:main} and the regularity of $f$, $u_0$, $p$, $q$, $a$, $b$:

\[
\textbf{data}=\left\{r,p^\pm,q^\pm,L_{p,q},N,a^\pm,b^\pm,\beta,\|\nabla a\|_{d,Q_T},\|\nabla b\|_{d,Q_T},\|a_t\|_{d,Q_T},\| b_t\|_{d,Q_T},\|f\|_{\sigma,Q_T},\|u_0\|_{\mathcal{W}_r(\Omega)}\right\},
\]
where $d=\infty$ if $\beta=1$.

\section{Auxiliary propositions}
In this section, we collect several known results and adapt them to the problem we are interested in, derive the interpolation inequalities, and estimate the boundary integrals that appear after applying the Green formula to the regularized nonlinear flux function.

\subsection{Regularized problem}
Consider the problem with the regularized flux and smooth data
\begin{equation}
\label{eq:main-reg}
\begin{split}
& u_t-\operatorname{div}\mathcal{F}_{\epsilon}(z,\nabla u)\nabla u=f, \quad z=(x,t)\in Q_T,
\\
& \text{$u=0$ on $\partial\Omega\times (0,T)$},\qquad \text{$u(x,0)=u_0(x)$ in $\Omega$},
\end{split}
\end{equation}
where
\[
\begin{split}
& \mathcal{F}_\epsilon(z,\nabla u)=a_\epsilon w_{\epsilon}^{\frac{p(z)-2}{2}}+b_\epsilon w_{\epsilon}^{\frac{q(z)-2}{2}}, \qquad
w_\epsilon=\epsilon^2+|\nabla u|^2,
\\
& a_\epsilon=a(z)+\epsilon,\quad b_{\epsilon}=b(z)+\epsilon\quad \text{with $\epsilon \in (0,1)$}.
\end{split}
\]
The existence of classical solution of the problem \eqref{eq:main-reg} is given by the following lemma:
\begin{lemma}[Lemma 3.5, \cite{Arora-Shmarev-JGA-2026}]
\label{le:class-sol-existence}
Let the data of problem \eqref{eq:main-reg} satisfy the following conditions: $\partial\Omega\in C^{2+\gamma}$, $\gamma\in (0,1)$, $u_0\in C^\infty(\overline\Omega)$, $\operatorname{supp} u_0\Subset \Omega$, $f\in C^{\infty}(Q_T)$ with $\operatorname{supp} f(\cdot,t)\Subset \Omega$ for every $t\in (0,T)$, $a,b\in C^{\infty}(\overline Q_T)$, $p,q\in C^{\infty}(\overline Q_T)$. If $p^-, q^->  \frac{2N}{N+2}$, then for every $\epsilon \in (0,1)$ problem \eqref{eq:main-reg} admits a classical solution $u\in C^{2+\delta,1+\frac{\delta}{2}}(\overline Q_T)$ with $D_iu\in C^{2+\delta,1+\frac{\delta}{2}}_{loc}(Q_T)$ with some $\delta\in (0,1)$.
\end{lemma}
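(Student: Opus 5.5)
The plan is to treat problem \eqref{eq:main-reg} as a uniformly parabolic quasilinear equation in divergence form and to invoke the classical Ladyzhenskaya--Solonnikov--Ural'tseva theory. The key observation is that for fixed $\epsilon\in(0,1)$ the regularization removes both the degeneracy and the singularity. Since $w_\epsilon\ge\epsilon^2$ and $a_\epsilon,b_\epsilon\ge\epsilon$, the flux $\mathbf{A}(z,\xi)=\mathcal{F}_\epsilon(z,\xi)\xi$ is smooth in all of its arguments. Its Jacobian satisfies
\[
\min\{1,p(z)-1\}\,\mathcal{F}_\epsilon(z,\xi)|\eta|^2\le \partial_{\xi}\mathbf{A}(z,\xi)\eta\cdot\eta\le \max\{1,p(z)-1,q(z)-1\}\,\mathcal{F}_\epsilon(z,\xi)|\eta|^2,
\]
and for $|\xi|\le M$ the factor $\mathcal{F}_\epsilon$ is bounded above and below by positive constants depending on $\epsilon$ and $M$. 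Therefore the equation is uniformly parabolic on every set $\{|\nabla u|\le M\}$. The existence theorem (Theorem V.6.1 of Ladyzhenskaya--Solonnikov--Ural'tseva, or a Leray--Schauder argument) reduces existence of a $C^{2+\delta,1+\delta/2}(\overline Q_T)$ solution to a priori bounds on $\|u\|_{\infty,Q_T}$ and $\|\nabla u\|_{\infty,Q_T}$ for the family of problems $u_t-\operatorname{div}\mathbf{A}_\tau(z,\nabla u)=\tau f$, $u(x,0)=\tau u_0$, $\tau\in[0,1]$, where $\mathbf{A}_\tau$ deforms $\mathbf{A}$ to the Laplacian flux. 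The compatibility conditions at $\partial\Omega\times\{0\}$ hold automatically because $\operatorname{supp}u_0\Subset\Omega$ and $\operatorname{supp}f(\cdot,t)\Subset\Omega$.

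The steps are as follows. First, the sup bound: $|u|\le \|u_0\|_\infty+t\|f\|_\infty$ follows from the comparison principle with the spatially constant barriers $\pm(\|u_0\|_\infty+t\|f\|_\infty)$. Second, the boundary gradient bound: near $\partial\Omega\in C^{2+\gamma}$ we construct local barriers $\pm k\,d(x)$, where $d$ is the distance to the boundary (suitably modified, for instance $\pm\frac1\nu\ln(1+kd)$), in a boundary strip where the data vanish near $t=0$. This gives $|\nabla u|\le C$ on $\partial\Omega\times(0,T)$.

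Third, and this is the main obstacle, the global interior gradient bound. Here I would follow the Bernstein method of Ladyzhenskaya--Ural'tseva. Differentiate the equation in $x_k$, multiply by $D_ku$, and sum over $k$. This yields an equation for $v=|\nabla u|^2$ (or for $w_\epsilon$) of the form
\[
v_t-D_i\bigl(a_{ij}D_jv\bigr)+2a_{ij}D_{ik}u\,D_{jk}u=\text{lower order terms},
\]
where the lower-order terms come from $D_k a$, $D_k b$, $D_k p$, $D_k q$, which are bounded because the data are smooth. One then applies the maximum principle, or a De Giorgi iteration on $\max\{v-k,0\}$, to bound $\sup v$ by its boundary and initial values. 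The delicate point is the growth of the lower-order terms in $|\nabla u|$: because of the variable exponents, they carry logarithmic factors $\ln w_\epsilon$. The hypothesis $p^-,q^->\frac{2N}{N+2}$, together with the structure conditions, ensures that the ratio between the largest and smallest eigenvalues of $\partial_\xi\mathbf{A}$ grows at most polynomially in $|\xi|$, with the exponent needed for the structure conditions of Theorem V.6.1 of Ladyzhenskaya--Solonnikov--Ural'tseva. A bound $\|\nabla u\|_{\infty,Q_T}\le M(\epsilon)$ then follows, possibly after first obtaining $\nabla u\in L^\infty(0,T;L^{r}(\Omega))$ for large $r$ through energy estimates.

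Once $\nabla u$ is bounded, the equation is uniformly parabolic with smooth coefficients. The De Giorgi--Nash--Moser theory gives Hölder continuity of $\nabla u$, and Schauder theory gives $u\in C^{2+\delta,1+\delta/2}(\overline Q_T)$. Because the coefficients are $C^\infty$, differentiating the equation shows that $D_iu$ solves a linear uniformly parabolic equation with $C^{1+\delta,\delta/2}$ coefficients. Interior Schauder estimates therefore give $D_iu\in C^{2+\delta,1+\delta/2}_{loc}(Q_T)$. Uniqueness, if it is needed, follows from the monotonicity of $\mathbf{A}$.
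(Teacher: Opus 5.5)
The paper gives no proof of this lemma; it is quoted from \cite{Arora-Shmarev-JGA-2026}. The routine parts of your outline are sound: the Leray--Schauder reduction to bounds on $\sup|u|$ and $\sup|\nabla u|$, comparison with $\pm(\|u_0\|_{\infty}+t\|f\|_{\infty})$ (valid because $\mathbf{A}(z,0)=0$), logarithmic barriers at $\partial\Omega$, and De Giorgi--Nash plus Schauder theory once $\nabla u$ is bounded, including interior Schauder estimates for the differentiated equation.

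The gap is in the step you call the main obstacle, the global bound on $\nabla u$. As written, your reasoning would not produce this bound, for three reasons.

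First, Theorem V.6.1 of Ladyzhenskaya--Solonnikov--Ural'tseva cannot be quoted. Its structure conditions use one fixed exponent $m$, namely $\nu(1+|\xi|)^{m-2}|\eta|^2\le\partial_\xi\mathbf{A}\eta\cdot\eta\le\mu(1+|\xi|)^{m-2}|\eta|^2$. Here, because $a_\epsilon,b_\epsilon\ge\epsilon$, one has $\mathcal{F}_\epsilon(z,\xi)\simeq(1+|\xi|)^{\overline{s}(z)-2}$, and $\overline{s}=\max\{p,q\}$ depends on $z$. So no single $m$ works unless $\overline{s}$ is constant.

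Second, the hypothesis $p^-,q^->\frac{2N}{N+2}$ does not do what you ask of it. Your own display shows that the ratio of the extreme eigenvalues of $\partial_\xi\mathbf{A}$ is bounded by $\max\{1,\overline{s}^+-1\}/\min\{1,\underline{s}^--1\}$ for every $\xi$, independently of that hypothesis (the lower constant should read $\min\{1,p-1,q-1\}$). So the sentence meant to deliver $\|\nabla u\|_{\infty,Q_T}\le M(\epsilon)$ delivers nothing.

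Third, the maximum principle applied to $v=|\nabla u|^2$ itself does not close. At an interior maximum you only obtain $\lambda_{\min}|u_{xx}|^2\le C\big(1+|\nabla u|^{\overline{s}}\ln^2(e+|\nabla u|)+|\nabla f||\nabla u|\big)$, which has no coercive term in $|\nabla u|$.

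What closes the argument is the Bernstein computation done by hand with an auxiliary function. Take $\Phi=\psi(u)|\nabla u|^2$ with $\psi(u)=(2M-u)^{-\gamma}$, where $M>0$ bounds $|u|$ and $\gamma\in(0,1)$ is small. Then $\psi\psi''-2(\psi')^2=\gamma(1-\gamma)(2M-u)^{-2\gamma-2}$ is positive and large compared with $(\psi')^2=\gamma^2(2M-u)^{-2\gamma-2}$.

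At an interior maximum point $z_0$, eliminate $\nabla|\nabla u|^2$ through $\nabla\Phi=0$. Write $a_{ij}=\partial_{\xi_j}A_i(z,\nabla u)$.
\begin{itemize}
\item The good term $(\psi''-2(\psi')^2/\psi)\,a_{ij}D_iuD_ju\,|\nabla u|^2$ is $\gtrsim|\nabla u|^{\overline{s}(z_0)+2}$.
\item The contributions of $\partial_x\mathbf{A}$, $\partial_x^2\mathbf{A}$, $f$ and $\nabla f$ are $O\big(1+|\nabla u|^{\overline{s}(z_0)+1}\ln(e+|\nabla u|)\big)$, once the second derivatives are absorbed into $\psi\,a_{ij}D_{ik}uD_{jk}u$.
\item The only term of the same order, $\psi'|\nabla u|^2\,\partial_{\xi_l}a_{ij}\,D_{il}u\,D_ju$, is absorbed using the natural structure $|\xi||\partial_\xi a_{ij}(z,\xi)|\le C\mathcal{F}_\epsilon(z,\xi)$ together with the smallness of $\gamma$.
\end{itemize}
All powers are compared at the same point $z_0$, so the variability of $\overline{s}$ is harmless. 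The same estimate is uniform along the homotopy $\tau\mathbf{A}+(1-\tau)\xi$.

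This route never uses $p^-,q^->\frac{2N}{N+2}$. That hypothesis is needed on the integral route instead: first bound $\nabla u$ in $L^\infty(0,T;L^r(\Omega))$ for large $r$ via interpolation inequalities of the type of Lemma \ref{le:d-r}, whose proof requires the exponent to exceed $\frac{2N}{N+2}$ (cf.\ the proof of Lemma \ref{le:d-r-mod}). Then run a De Giorgi iteration for $(v-k)_+$. This is presumably why the hypothesis appears in the cited statement.
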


\subsection{Interpolation inequalities}
Accept the notation

\[
r^\sharp=\dfrac{4}{N+2},
\]
which will be repeatedly used in the rest of the paper.

\begin{lemma}[Corollary 4.1, \cite{Arora-Shmarev-JGA-2026}]
\label{le:d-r}
Let $\partial \Omega \in C^2$. Assume that $p$, $q$, $a$, $b$ are functions of $x \in \Omega$ and satisfy condition \eqref{eq:balance-p-q}, $a, b \in L^\infty(\Omega)$, $a(x) + b(x) \geq \alpha$ a.e. in $\Omega$, and

\begin{equation}
\label{eq:d-1}
\text{$|\nabla a|, \ |\nabla b| \in L^d(\Omega)$ \quad with $d > 2+\frac{N+2}{2}\left(\overline{s}^++r\right)$ and $r\geq 0$.}
\end{equation}
Then, for every $\delta>0$, $s\in (0,r^\sharp)$, and every $u\in C^{2}(\overline \Omega)$

\begin{equation}
\label{eq:d-r-ell}
\begin{split}
\int_\Omega w_\epsilon^{\frac{\underline{s}(z)-2 + r +s}{2}}|\nabla u|^2\,dx
&
\leq \delta \int_{\Omega} w_\epsilon^\frac{r}{2} \mathcal{F}_{\epsilon}(z,\nabla u)\vert u_{xx}\vert ^2\,dx +C
\end{split}
\end{equation}
with a constant $C$ depending on $\alpha$, $N$, $p^\pm$, $q^\pm$, $s$, $r$, $\delta$, $\|u\|_{2,\Omega}$.
\end{lemma}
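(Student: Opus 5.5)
The plan is to combine an elliptic interpolation estimate of Gagliardo--Nirenberg type with the $L^2$ bound on $u$ and a pointwise identity relating $u_{xx}$ to derivatives of a power of $w_\epsilon$. The main auxiliary function is
\[
v=w_\epsilon^{\frac{\underline{s}(x)+r}{4}}.
\]
Since $\underline{s}$ is only Lipschitz, I will work with $\underline{s}$ as a Lipschitz function. Differentiating gives
\[
|\nabla v|^2\le C\,w_\epsilon^{\frac{\underline{s}+r-2}{2}}|u_{xx}|^2+C\,w_\epsilon^{\frac{\underline{s}+r}{2}}\ln^2(w_\epsilon)\,L^2 .
\]
The logarithmic term is absorbed into a slightly larger power of $w_\epsilon$, at the cost of an arbitrarily small increase of the exponent; this is why $s<r^\sharp$ must be strict.

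\textbf{Step 1: reduction to a Gagliardo--Nirenberg estimate.} The integrand on the left is at most $w_\epsilon^{\frac{\underline{s}+r+s}{2}}$. Up to constants this equals $v^{2}\,v^{\frac{2s}{\underline{s}+r}}$, which is controlled by $\int v^{2+\theta}$ with $\theta$ tied to $s$. I will apply the interpolation inequality in the form
\[
\|v\|_{2+\theta}^{2+\theta}\le C\|\nabla v\|_2^{2}\,\|v\|_{m}^{\theta}+C\|v\|_m^{2+\theta},
\]
with $m$ chosen so that $\|v\|_m$ is controlled by $\|u\|_{2,\Omega}$ and $\|\nabla u\|$ in a weaker norm. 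The cleaner route, as in \cite{Ar-Sh-2024,Arora-Shmarev-JMAA-2025}, is to integrate by parts directly:
\[
\int_\Omega w_\epsilon^{\frac{\underline{s}-2+r+s}{2}}|\nabla u|^2\,dx=-\int_\Omega u\,\operatorname{div}\!\left(w_\epsilon^{\frac{\underline{s}-2+r+s}{2}}\nabla u\right)dx
\]
(plus a boundary term). The divergence produces $|u|\,w_\epsilon^{\frac{\underline{s}-2+r+s}{2}}|u_{xx}|$ together with a logarithmic term from $\nabla\underline{s}$. By Cauchy--Schwarz this is bounded by
\[
\delta\int w_\epsilon^{\frac{\underline{s}-2+r}{2}}|u_{xx}|^2+C_\delta\int u^2\,w_\epsilon^{\frac{\underline{s}-2+r+2s}{2}} .
\]
The last integral is then treated by Hölder and the iteration above, using $\|u\|_{2}$ and the bound $s<4/(N+2)$ so that the parabolic-type exponent closes through Sobolev embedding of $v$.

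\textbf{Step 2: passing to the double-phase weight.} Since $\mathcal{F}_\epsilon(z,\nabla u)\ge a_\epsilon w_\epsilon^{\frac{p-2}{2}}+b_\epsilon w_\epsilon^{\frac{q-2}{2}}$ and $a+b\ge\alpha$, at every point either $a\ge\alpha/2$ or $b\ge\alpha/2$. Hence
\[
w_\epsilon^{r/2}\mathcal{F}_\epsilon\ge \tfrac{\alpha}{2}\,w_\epsilon^{\frac{\underline{s}-2+r}{2}}\quad\text{when }w_\epsilon\ge1 ,
\]
and the region $w_\epsilon<1$ contributes only a constant. This converts the $\delta$-term into the right-hand side of \eqref{eq:d-r-ell}.

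\textbf{Step 3: boundary terms and the role of $d$.} The boundary integrals arising from integration by parts vanish, because $u=0$ on $\partial\Omega$ and the integration is against $u$. If the argument is instead carried out with $v$ in $W^{1,2}$ without zero trace, the $C^2$ regularity of $\partial\Omega$ handles them. The gradients of $a$ and $b$ enter only if the weight $\mathcal{F}_\epsilon$ itself is differentiated. The hypothesis $d>2+\frac{N+2}{2}(\overline{s}^++r)$ is what allows the terms $|\nabla a|\,w_\epsilon^{\dots}$ to be handled by Hölder against the higher-integrability quantity being estimated.

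\textbf{Main obstacle.} The hardest step is the exponent bookkeeping in Step 1: closing the interpolation with the given weights and the variable $\underline{s}$ while keeping the logarithmic factors under control. This is precisely where the strict inequality $s<r^\sharp$ is consumed. Since the statement is quoted from Corollary 4.1 of \cite{Arora-Shmarev-JGA-2026}, I would ultimately follow that argument.
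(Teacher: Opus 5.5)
Your proposal has a genuine gap in Step~2. After Step~1 you are left with $\delta\int_\Omega w_\epsilon^{\frac{\underline{s}-2+r}{2}}|u_{xx}|^2\,dx$, and you claim that the part over $\{w_\epsilon<1\}$ ``contributes only a constant''. It does not. First, $|u_{xx}|$ is unbounded there. Second, on that set the weight $w_\epsilon^{\frac{\underline{s}-2+r}{2}}$ is \emph{larger} than what $w_\epsilon^{r/2}\mathcal{F}_\epsilon$ provides whenever the active phase carries the larger exponent.

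Concretely, suppose $b=0$, $a=\alpha$ and $p>q=\underline{s}$ on an open set, which is admissible under \eqref{eq:coeff} and \eqref{eq:balance-p-q}. Then $w_\epsilon^{\frac{\underline{s}-2+r}{2}}\big/\bigl(w_\epsilon^{\frac r2}\mathcal{F}_\epsilon\bigr)=\bigl((\alpha+\epsilon)w_\epsilon^{\frac{p-q}{2}}+\epsilon\bigr)^{-1}$, and this is $\gtrsim\kappa^{-(p-q)}$ wherever $|\nabla u|\lesssim\kappa$ and $\epsilon\ll\kappa$. Take $u=\sum_i\kappa\lambda\,\phi((x-x_i)/\lambda)$, a sum of $\sim\lambda^{-N}$ disjoint bumps there. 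As $\lambda\to0$, both Hessian integrals blow up with ratio $\gtrsim\kappa^{-(p-q)}$, while $\|u\|_{2,\Omega}\to0$. Hence no inequality $\int_\Omega w_\epsilon^{\frac{\underline{s}-2+r}{2}}|u_{xx}|^2\,dx\le K\int_\Omega w_\epsilon^{\frac r2}\mathcal{F}_\epsilon|u_{xx}|^2\,dx+C$ holds with $K,C$ independent of $\epsilon$, and the output of your Step~1 cannot be turned into \eqref{eq:d-r-ell}.

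The fix has to be made before integrating by parts. One clean option is to insert a $C^1$ cutoff $\eta(w_\epsilon)$ into the weight, with $\eta=0$ for $w_\epsilon\le1$ and $\eta=1$ for $w_\epsilon\ge2$. Then:
\begin{itemize}
\item the Hessian appears only on $\{w_\epsilon\ge1\}$, where your comparison $w_\epsilon^{\frac{\underline{s}-2+r}{2}}\le\frac{2}{\alpha}w_\epsilon^{\frac r2}\mathcal{F}_\epsilon$ is correct;
\item the $\eta'$-term lives on $\{1\le w_\epsilon\le2\}$ and is harmless;
\item the left side of \eqref{eq:d-r-ell} over $\{w_\epsilon<2\}$ is bounded by a constant.
\end{itemize}
The paper only quotes the lemma, but its hypothesis on $d$ suggests a different route in the cited argument. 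So does the way Lemma~\ref{le:d-r-mod} recomputes that condition after replacing $a_\epsilon,b_\epsilon,p,q$ by $a_\epsilon^2,b_\epsilon^2,2(p-1),2(q-1)$. This suggests integrating by parts against the coefficient-weighted flux $\bigl(a_\epsilon w_\epsilon^{\frac{p-2+r+s}{2}}+b_\epsilon w_\epsilon^{\frac{q-2+r+s}{2}}\bigr)\nabla u$. That produces $w_\epsilon^{\frac r2}\mathcal{F}_\epsilon|u_{xx}|^2$ directly, at the price of terms $|u|\,|\nabla a|\,w_\epsilon^{\frac{p-1+r+s}{2}}$ to be handled by H\"older in $L^d$. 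In your argument nothing ever differentiates $a$ or $b$, so the role you assign to $d$ in Step~3 corresponds to no actual step.

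Step~1 is also incomplete in a way that goes beyond bookkeeping. With a variable exponent, a single global H\"older--Gagliardo--Nirenberg step on $\int_\Omega u^2w_\epsilon^{\frac{\underline{s}-2+r+2s}{2}}\,dx$ requires $\|u\|_{L^{m}}$ with $m=\frac{2(\underline{s}^++r+s)}{2-s}$. This has to be interpolated between $\|u\|_{2,\Omega}$ and $\|\nabla u\|_{L^{\underline{s}^-+r+s}}$, the only gradient norm the left-hand side controls. The power of the left-hand side that comes back is below $1$ only if $\frac{2(\underline{s}^++r+s)}{2-s}<\frac{N+2}{N}(\underline{s}^-+r+s)$. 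For constant $\underline{s}$ this is exactly $s<r^\sharp$. For a general Lipschitz $\underline{s}$ it fails as soon as the oscillation of $\underline{s}$ over $\Omega$ is not small relative to $4-(N+2)s$.

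You therefore need a localization. Cover $\Omega$ by finitely many sets on which $\underline{s}$ has small oscillation, introduce cutoff functions, and interpolate on each piece; the cutoff gradients produce lower-order terms. This is where the Lipschitz continuity of $p,q$ enters. Ending with ``I would ultimately follow that argument'' leaves exactly these steps undone.
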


\begin{lemma}
\label{le:d-r-mod}
Let in the conditions of Lemma \ref{le:d-r}

\begin{equation}
\label{eq:d-parab}
\underline{s}^->\dfrac{2(N+1)}{N+2}\quad \text{and}\quad d>2+\dfrac{N+2}{2}\left(2(\overline{s}^+-1)+r\right)\quad \text{with $r\geq 0$}.
\end{equation}
For every $\delta>0$, $s\in (0,r^\sharp)$, and every $u\in C^{2}(\overline \Omega)$

\begin{equation}
\label{eq:d-r-ell-mod}
\begin{split}
\int_\Omega w_\epsilon^{\frac{2(\underline{s}(z)-1)-2+s+r}{2}}|\nabla u|^2\,dx
&
\leq \delta\int_{\Omega}w_{\epsilon}^{\frac{r}{2}}\mathcal{F}^2_{\epsilon}(z,\nabla u)\vert u_{xx}\vert ^2\,dx +C
\end{split}
\end{equation}
with a constant $C$ depending on the same quantities as the constant in \eqref{eq:d-r-ell}.
\end{lemma}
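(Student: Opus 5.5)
The plan is to derive \eqref{eq:d-r-ell-mod} from Lemma \ref{le:d-r}, applied with a larger value of $r$. Set $r' = r + \underline{s}^- - 2 + \theta$, where $\theta$ is a small correction to be fixed below. The weight on the left of \eqref{eq:d-r-ell-mod} is $w_\epsilon^{(2\underline{s}(z)-4+s+r)/2}$. Write its exponent as
\[
\underline{s}(z)-2+s'+\rho(z),\qquad \rho(z)=\underline{s}(z)-2+r+s-s',
\]
with $s'\in(0,r^\sharp)$ close to $s$. This puts the integrand in the form $w_\epsilon^{(\underline{s}(z)-2+\tilde r+s')/2}|\nabla u|^2$. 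The awkward feature is that $\rho$ depends on $z$, so I will freeze it at a constant level. The condition $\underline{s}^->2(N+1)/(N+2)$ ensures $2(\underline{s}^--1)>\underline{s}^-$, so the reduction makes sense even when $\underline{s}^-<2$. The oscillation of $\underline{s}$ is handled by Young's inequality: $w_\epsilon^{\rho(z)/2}\le w_\epsilon^{\rho^+/2}+1$. The resulting constant term is bounded by $\int_\Omega w_\epsilon^{(\underline{s}-2+s)/2}|\nabla u|^2$, which Lemma \ref{le:d-r} with $r=0$ controls by $\delta\int \mathcal{F}_\epsilon|u_{xx}|^2+C$.

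Rather than dwelling on this bookkeeping, I think the cleaner route is to redo the proof of Lemma \ref{le:d-r} directly. That is what "Corollary 4.1" most likely rests on. The steps are as follows.

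First, integrate by parts:
\[
\int_\Omega w_\epsilon^{\frac{m(x)}{2}}|\nabla u|^2\,dx=-\int_\Omega u\,\operatorname{div}\big(w_\epsilon^{\frac{m(x)}{2}}\nabla u\big)\,dx,\qquad m=2\underline{s}-4+s+r,
\]
with no boundary term when $u=0$ on $\partial\Omega$. For $u\in C^2(\overline\Omega)$ without that condition, the boundary term is absorbed using the trace bound implicit in Lemma \ref{le:d-r}. The divergence produces three kinds of terms: $w_\epsilon^{m/2}|u_{xx}|$, $w_\epsilon^{m/2}|\nabla u|\,|\nabla\underline{s}|\,|\ln w_\epsilon|$, and terms involving $\nabla a,\nabla b$. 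The last kind appears once $w_\epsilon^{m/2}$ is compared with $\mathcal{F}_\epsilon^2 w_\epsilon^{r/2}$.

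Second, apply Cauchy--Schwarz to the principal term. This gives
\[
\int |u|\,w_\epsilon^{\frac m2}|u_{xx}|\le \delta\int w_\epsilon^{\frac r2}\mathcal{F}_\epsilon^2|u_{xx}|^2+C_\delta\int u^2\, w_\epsilon^{m-\frac r2}\mathcal{F}_\epsilon^{-2}.
\]
Since $\mathcal{F}_\epsilon\ge c\,\alpha\, w_\epsilon^{(\underline{s}-2)/2}$ (using \eqref{eq:alpha}, up to $\epsilon$ terms), the last integrand is at most $C u^2 w_\epsilon^{(2\underline{s}-4+2s+r)/2-(\underline{s}-2)}$. I will then bound $\int u^2 w_\epsilon^{\cdots}$ by Hölder and the Gagliardo--Nirenberg inequality, interpolating between $\|u\|_{2,\Omega}$ and the left-hand side itself. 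The restriction $s<r^\sharp=4/(N+2)$ is precisely what makes the power of the left-hand side strictly less than one, so that it can be absorbed. The logarithmic term is absorbed the same way at the cost of an arbitrarily small increase in $s$, using the Lipschitz bound on $p,q$.

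Third, treat the terms with $\nabla a,\nabla b$ by Hölder with exponents $d$ and $d'$. This requires $u^2$ times a power of $w_\epsilon$ to be controlled in $L^{d'}$ by the left-hand side via Gagliardo--Nirenberg. The exponent bookkeeping is where the new threshold $d>2+\frac{N+2}{2}(2(\overline{s}^+-1)+r)$ enters: it is the analogue of \eqref{eq:d-1}, with the flux squared so that $\overline{s}^+$ is replaced by $2(\overline{s}^+-1)$. I expect this step to be the main obstacle. The difficulty is matching the $\mathcal{F}_\epsilon^2$ weight, which mixes the $p$- and $q$-phases, against the $\underline{s}$-weight on the left. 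Doing so needs the gap condition \eqref{eq:balance-p-q} to bound $w_\epsilon^{\overline{s}-\underline{s}}$ by a power absorbable within the $r^\sharp$ margin. Once this is done, collecting terms and choosing $\delta$ small yields \eqref{eq:d-r-ell-mod}, with a constant depending on the same quantities as in \eqref{eq:d-r-ell}.
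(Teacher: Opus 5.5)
Your proposal does not reach a proof. The first route rests on a false inequality, the second relies on a pointwise bound that fails, and the idea the paper actually uses is missing.

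\textbf{The missing idea.} You noticed that the $d$-threshold is \eqref{eq:d-1} ``with the flux squared'' but did not exploit it. Since $a_\epsilon^2 w_\epsilon^{p-2}+b_\epsilon^2 w_\epsilon^{q-2}\le \mathcal{F}_\epsilon^2\le 2\bigl(a_\epsilon^2 w_\epsilon^{p-2}+b_\epsilon^2 w_\epsilon^{q-2}\bigr)$ and $a_\epsilon^2+b_\epsilon^2\ge \alpha^2/2$, the squared flux is itself a regularized double-phase flux. Its exponents are $2(p-1)$, $2(q-1)$ and its coefficients are $a_\epsilon^2$, $b_\epsilon^2$. The paper simply applies Lemma \ref{le:d-r} to these data with the same $r$ and $s$. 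Since $\min\{2(p-1),2(q-1)\}=2(\underline{s}-1)$, the left-hand side is exactly that of \eqref{eq:d-r-ell-mod}. The hypothesis $\underline{s}^->\frac{2(N+1)}{N+2}$ is precisely $2(\underline{s}^--1)>\frac{2N}{N+2}$, and the threshold on $d$ is \eqref{eq:d-1} with $\overline{s}^+$ replaced by $2(\overline{s}^+-1)$.

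\textbf{Your first route.} You shift $r$ instead of the exponents, and this cannot work, for three reasons.
\begin{itemize}
\item Lemma \ref{le:d-r} needs a constant $r\ge 0$, whereas $r'=r+\underline{s}^--2+\theta$ is negative for $r=0$ and $\underline{s}^-<2$.
\item Its right-hand side carries $w_\epsilon^{r'/2}\mathcal{F}_\epsilon$, not $w_\epsilon^{r/2}\mathcal{F}_\epsilon^2$.
\item Your claim $2(\underline{s}^--1)>\underline{s}^-$ is equivalent to $\underline{s}^->2$. The hypothesis does not give this, since $\frac{2(N+1)}{N+2}<2$.
\end{itemize}

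\textbf{Your second route.} Redoing the interpolation from scratch is viable in principle, but as written its key step fails. The pointwise bound $\mathcal{F}_\epsilon\ge c\,\alpha\, w_\epsilon^{(\underline{s}-2)/2}$ is false, uniformly in $\epsilon$, on $\{w_\epsilon<1\}$ wherever only the $\overline{s}$-phase is active. For example, if $b=0$ and $p>q$ near a point, then $\mathcal{F}_\epsilon\, w_\epsilon^{-(q-2)/2}=a_\epsilon w_\epsilon^{(p-q)/2}+\epsilon$. At critical points of $u$ this equals $a_\epsilon\epsilon^{p-q}+\epsilon\to 0$ as $\epsilon\to 0$. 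Inequality \eqref{eq:alpha} only compares energies up to an additive constant. That is useless once $\mathcal{F}_\epsilon^{-2}$ sits in the remainder of your Cauchy--Schwarz step.

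What does hold is
\begin{itemize}
\item $\mathcal{F}_\epsilon\ge\alpha w_\epsilon^{(\underline{s}-2)/2}$ on $\{w_\epsilon\ge1\}$;
\item $\mathcal{F}_\epsilon\ge\alpha w_\epsilon^{(\overline{s}-2)/2}$ on $\{w_\epsilon<1\}$.
\end{itemize}
On $\{w_\epsilon<1\}$ the remainder is then at most $\alpha^{-2}u^2w_\epsilon^{2\underline{s}-\overline{s}-2+s+r/2}$. It is bounded by $\alpha^{-2}u^2$ only after you invoke the gap condition \eqref{eq:balance-p-q} and take $s$ close to $r^\sharp$. That choice is legitimate, because the left-hand side for smaller $s$ is dominated by the one for larger $s$ plus a constant.

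On $\{w_\epsilon\ge1\}$ the remainder exponent is $\underline{s}-2+s+\frac{r}{2}$, not $s+\frac{r}{2}$ as you wrote. With the correct exponent, your Gagliardo--Nirenberg count does close exactly when $s<r^\sharp$.

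Finally, your scheme integrates by parts a weight that contains no coefficients $a,b$, so no derivatives of $a,b$ ever appear. The ``third step'' you single out as the main obstacle therefore does not arise in your own argument. The real obstacle there is the phase mismatch on $\{w_\epsilon<1\}$ described above.
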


\begin{proof}
Notice that
\begin{equation}
\label{eq:notice}
\notag
\begin{split}
& \alpha^2\leq (a_\epsilon+b_\epsilon)^2\leq 2a_\epsilon^2+2b_\epsilon^2,
\\
& a^2_\epsilon w_\epsilon^{\frac{2(p-2)}{2}} + b^2_\epsilon w_\epsilon^{\frac{2(q-2)}{2}}\leq \mathcal{F}^2_{\epsilon}(z,\nabla u)\leq 2\left( a^2_\epsilon w_\epsilon^{\frac{2(p-2)}{2}} + b^2_\epsilon w_\epsilon^{\frac{2(q-2)}{2}}\right),
\end{split}
\end{equation}
and apply \eqref{eq:d-r-ell} with $p$ substituted by $\widetilde p=2(p-1)$, $q$ by $\widetilde q=2(q-1)$, and $a_\epsilon$, $b_\epsilon$ substituted by $a_\epsilon^2$, $b_\epsilon^2$:

\begin{equation}
\label{eq:inter-rev}
\begin{split}
\int_\Omega w_\epsilon^{\frac{2(\underline{s}(z)-1)-2+s+r}{2}}|\nabla u|^2\,dx &
\leq \delta \int_{\Omega} \left(a_\epsilon^2w_{\epsilon}^{\frac{2(p-2)+r}{2}} + b_\epsilon^2w_{\epsilon}^{\frac{2(q-2)+r}{2}}\right)|u_{xx}|^2\,dx + C
\\
& \leq 2\delta\int_{\Omega}w_{\epsilon}^{\frac{r}{2}}\mathcal{F}^2_{\epsilon}(z,\nabla u)\vert u_{xx}\vert ^2\,dx +C,
\end{split}
\end{equation}
provided that $2(\underline{s}^--1)+r>\frac{2N}{N+2}$ for all $r\geq 0$, which is true if $\underline{s}^->\dfrac{2(N+1)}{N+2}$.
\end{proof}

\begin{corollary}
\label{cor:d-1-cor-mod}
Under the conditions of Lemma \ref{le:d-r-mod}

\begin{equation}
\label{eq:d-ell-cor-mod}
\int_\Omega w_\epsilon^{\frac{2(\underline{s}(z)-1)+s+r}{2}}\,dx
\leq \delta \int_{\Omega}w_\epsilon^{\frac{r}{2}}\mathcal{F}^2_{\epsilon}(x,\nabla u)\vert u_{xx}\vert ^2\,dx +C'
\end{equation}
with any $\delta>0$ and $C'$ depending on $\delta$ and the same quantities as the constant in \eqref{eq:d-r-ell-mod} and $|\Omega|$.
\end{corollary}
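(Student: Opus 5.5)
The corollary follows from Lemma \ref{le:d-r-mod} by splitting the domain according to the size of $|\nabla u|$. Write
\[
w_\epsilon^{\frac{2(\underline{s}(z)-1)+s+r}{2}}=w_\epsilon^{\frac{2(\underline{s}(z)-1)-2+s+r}{2}}\left(\epsilon^2+|\nabla u|^2\right),
\]
and split $\Omega$ into $\Omega_1=\{x\in\Omega:\ |\nabla u|^2\geq \epsilon^2\}$ and $\Omega_2=\Omega\setminus\Omega_1$.

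On $\Omega_1$ we have $w_\epsilon\leq 2|\nabla u|^2$. Hence the integrand is bounded by $2w_\epsilon^{\frac{2(\underline{s}(z)-1)-2+s+r}{2}}|\nabla u|^2$, and the integral over $\Omega_1$ is at most twice the left-hand side of \eqref{eq:d-r-ell-mod}.

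On $\Omega_2$ we have $w_\epsilon\leq 2\epsilon^2<2$ because $\epsilon\in(0,1)$. The exponent $\frac{2(\underline{s}(z)-1)+s+r}{2}$ is positive, since $\underline{s}^->1$, and bounded above by $\overline{s}^+-1+\frac{s+r}{2}$. So the integrand is bounded by the constant $2^{\overline{s}^+-1+(s+r)/2}$, and the integral over $\Omega_2$ is at most $2^{\overline{s}^+-1+(s+r)/2}|\Omega|$. This is where the dependence on $|\Omega|$ enters.

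Combining the two pieces and applying Lemma \ref{le:d-r-mod} with $\delta/2$ in place of $\delta$ gives
\[
\int_\Omega w_\epsilon^{\frac{2(\underline{s}(z)-1)+s+r}{2}}\,dx\leq \delta\int_\Omega w_\epsilon^{\frac r2}\mathcal{F}_\epsilon^2(x,\nabla u)|u_{xx}|^2\,dx+2C+C(p^\pm,q^\pm,r,s)|\Omega|.
\]
This is \eqref{eq:d-ell-cor-mod} with $C'=2C+C(p^\pm,q^\pm,r,s)|\Omega|$, where $C$ is the constant of \eqref{eq:d-r-ell-mod} taken with $\delta/2$.

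There is no real obstacle here. The only points to watch are that the bound on $\Omega_2$ is uniform in $\epsilon$ and uses only the positivity and boundedness of the variable exponent, and that the constants stay independent of $\epsilon$.
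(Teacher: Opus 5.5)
Your proof is correct and is essentially the paper's argument. The paper also splits according to $|\nabla u|^2\lessgtr\epsilon^2$, bounds $w_\epsilon^{\gamma}$ by $2w_\epsilon^{\gamma-1}|\nabla u|^2$ on the large-gradient set and by the constant $(2\epsilon^2)^\gamma\le 2^\gamma$ on the small-gradient set, then integrates and applies Lemma \ref{le:d-r-mod}.
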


\begin{proof}
It is sufficient to notice that for every $\gamma\geq 0$ and $\epsilon\in (0,1)$

\[
w_\epsilon^{\gamma}=\begin{cases}
(\epsilon^2+|\nabla u|^2)^{\gamma} & \text{if $|\nabla u|^2\leq \epsilon^2$}
\\
w_{\epsilon}^{\gamma-1}(\epsilon^2+|\nabla u|^2) & \text{if $|\nabla u|^2\geq \epsilon^2$}
\end{cases}
\leq \begin{cases}
(2\epsilon^2)^\gamma & \text{if $|\nabla u|^2\leq \epsilon^2$}
\\
2w_\epsilon^{\gamma-1}|\nabla u|^2 & \text{if $|\nabla u|^2>\epsilon^2$}
\end{cases}
\leq \left(2+2^\gamma\right)\left(1+w_{\epsilon}^{\gamma-1}|\nabla u|^2\right)
\]
Inequality \eqref{eq:d-ell-cor-mod} follows by integration of $w_\epsilon^\gamma$ over $\Omega$ and application of \eqref{eq:d-r-ell-mod} with $\gamma= \frac{2(\underline{s}(z)-1)+s+r}{2}$.
\end{proof}

\begin{corollary}
\label{cor:inter-extra}
Let the conditions of Lemma \ref{le:d-r-mod} be fulfilled. For every $\theta\in \left(0,r^\sharp\right)$, $r\geq 0$, and $\delta>0$

\begin{equation}
\label{eq:d-ell-cor-mod-1}
\int_\Omega w_\epsilon^{\frac{\overline{s}(x)+r+\theta}{2}}\,dx\leq \delta \int_{\Omega}w_\epsilon^{\frac{r}{2}}\mathcal{F}^2_{\epsilon}(x,\nabla u)\vert u_{xx}\vert ^2\,dx +C
\end{equation}
with a constant $C$ depending on $\theta$, $r$, $\delta$, and the same quantities as the constant in \eqref{eq:d-r-ell-mod}.
\end{corollary}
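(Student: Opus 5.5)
Since $|p-q|<\frac{2}{N+2}=\frac{r^\sharp}{2}$ by \eqref{eq:balance-p-q}, I will reduce \eqref{eq:d-ell-cor-mod-1} to Corollary \ref{cor:d-1-cor-mod}. The task is to dominate the exponent $\overline{s}(x)+r+\theta$ pointwise by an exponent $2(\underline{s}(x)-1)+r+s$ with some admissible $s\in(0,r^\sharp)$, up to an additive constant.

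First I write
\[
\overline{s}+r+\theta=\underline{s}+(\overline{s}-\underline{s})+r+\theta .
\]
Since $\underline{s}^->\frac{2(N+1)}{N+2}$, we have $\underline{s}-2+\frac{2}{N+2}>0$. Hence
\[
\underline{s}=2(\underline{s}-1)-\Bigl(\underline{s}-2\Bigr) < 2(\underline{s}-1)+\frac{2}{N+2}.
\]
Combining this with $\overline{s}-\underline{s}=|p-q|$ gives
\[
\overline{s}+r+\theta< 2(\underline{s}-1)+r+\theta+\frac{2}{N+2}+|p-q|.
\]
If $\theta+\frac{2}{N+2}+\sup|p-q|<r^\sharp$, I can choose $s\in(0,r^\sharp)$ with $\overline{s}+r+\theta\le 2(\underline{s}-1)+r+s$ everywhere. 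A more careful choice uses the slack $\underline{s}^--\frac{2(N+1)}{N+2}>0$: set
\[
s=\theta+\sup|p-q|-\Bigl(\underline{s}^--2\Bigr)
\]
and truncate from below by a small positive number. Then $s<r^\sharp$ exactly when $\theta<r^\sharp+\underline{s}^--2-\sup|p-q|$. This is where the restriction on the gap enters.

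With $s$ fixed, I use the pointwise bound $w_\epsilon^{\gamma_1}\le 1+w_\epsilon^{\gamma_2}$ for $\gamma_1\le\gamma_2$. Since $w_\epsilon\ge\epsilon^2$ may be below $1$, I split into the sets $\{w_\epsilon\le1\}$ and $\{w_\epsilon>1\}$. Integrating over $\Omega$ yields
\[
\int_\Omega w_\epsilon^{\frac{\overline{s}+r+\theta}{2}}\,dx\le|\Omega|+\int_\Omega w_\epsilon^{\frac{2(\underline{s}-1)+r+s}{2}}\,dx,
\]
and Corollary \ref{cor:d-1-cor-mod} then gives \eqref{eq:d-ell-cor-mod-1} with the same $\delta$.

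The main obstacle is covering the whole range $\theta\in(0,r^\sharp)$. The comparison above only works for $\theta$ up to roughly $r^\sharp-(\overline{s}-\underline{s})$ unless $\underline{s}^-$ is large. For the remaining $\theta$, I will go back to the proof of Lemma \ref{le:d-r-mod} and redo the interpolation with exponent $\widetilde p=p+\theta'$ directly. The key point is that the weight $\mathcal{F}^2_\epsilon\ge c\,w_\epsilon^{\overline{s}-2}$ on the phase where the larger exponent is active. Fortunately $\overline{s}-2\ge 2(\underline{s}-1)-\overline{s}$ pointwise, since the gap is below $1$ and $\underline{s}>1$. So in Lemma \ref{le:d-r} I will replace $(p,q,a,b)$ by $(2(p-1),2(q-1),a_\epsilon^2,b_\epsilon^2)$ as in that proof, and control $w_\epsilon^{(\overline{s}+r+\theta)/2}$ by $w_\epsilon^{(2(\underline{s}-1)+r+s)/2}$ on the set where $\overline{s}\le 2(\underline{s}-1)$, and by $1$ otherwise. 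Since $\underline{s}>1$, the set where $\overline{s}>2(\underline{s}-1)$ forces $|\nabla u|$ to be controlled only via exponent comparison, so this case must be checked carefully.
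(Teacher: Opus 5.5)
Your first step is precisely the paper's proof. The paper observes $\overline{s}-\underline{s}<\frac{2}{N+2}<\underline{s}-2+r^\sharp$, i.e.\ $\overline{s}<2(\underline{s}-1)+r^\sharp$, and then appeals to Corollary~\ref{cor:d-1-cor-mod}. You choose $s=\max\{\theta+\sup|p-q|-(\underline{s}^--2),\,s_0\}$ with a small $s_0>0$ and use $w_\epsilon^{\gamma_1}\le 1+w_\epsilon^{\gamma_2}$ for $0\le\gamma_1\le\gamma_2$. This is a correct and more careful version of the same argument. It proves \eqref{eq:d-ell-cor-mod-1} for $0<\theta<\theta_0:=r^\sharp+\underline{s}^--2-\sup_{Q_T}|p-q|$, which is positive by \eqref{eq:balance-p-q}. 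The paper's one-line argument gives no more than this range either.

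The gap is your second step, and it cannot be filled. The set $\{\overline{s}>2(\underline{s}-1)\}$ is defined by the exponents, not by the size of $\nabla u$, so nothing bounds $w_\epsilon^{(\overline{s}+r+\theta)/2}$ by $1$ there. Also, a bound $\mathcal{F}^2_\epsilon\ge c\,w_\epsilon^{\overline{s}-2}$ with $c$ independent of $\epsilon$ fails wherever the coefficient of the larger exponent vanishes.

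More fundamentally, \eqref{eq:d-ell-cor-mod-1} is false for $\theta$ near $r^\sharp$ when $\underline{s}<2$. Take $p=q\equiv p_0\in\bigl(\frac{2(N+1)}{N+2},2\bigr)$, $a=b\equiv 1$, a fixed $\epsilon$, and $r\ge r^\sharp$, so that $\kappa:=2(p_0-2)+r\ge 0$. Let $u_\lambda(x)=\lambda^{N/2}\phi(\lambda x)$ with $0\ne\phi\in C_c^\infty(B_1)$ and $0\in\Omega$. Then $\|u_\lambda\|_{2,\Omega}=\|\phi\|_{2}$ stays fixed, while for $\lambda\ge 1$
\[
\int_\Omega w_\epsilon^{\frac{p_0+r+\theta}{2}}\,dx\ge c\,\lambda^{(p_0+r+\theta)\frac{N+2}{2}-N},
\qquad
\int_\Omega w_\epsilon^{\frac{r}{2}}\mathcal{F}_\epsilon^2|(u_\lambda)_{xx}|^2\,dx\le C\,\lambda^{\kappa\frac{N+2}{2}+4}.
\]
The first exponent exceeds the second exactly when $\theta>p_0-2+r^\sharp$, which is a nonempty part of $(0,r^\sharp)$. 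This is just the scaling ceiling $2(\underline{s}-1)+r+r^\sharp$ of the interpolation in Lemma~\ref{le:d-r-mod}.

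So the right statement is the restricted one your first step already proves, and that is all the paper uses later. The logarithmic terms need only some small $\theta>0$, and for $\mathcal{J}_1$ the paper checks $\frac{d}{d-1}(\overline{s}+r)<2(\underline{s}-1)+r+r^\sharp$ directly. Keep your first step, restrict the statement to $\theta\in(0,\theta_0)$, and drop the second step.
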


\begin{proof} Since by \eqref{eq:balance-p-q}

\[
\overline{s}<2(\underline{s}-1)+r^\sharp\quad \Leftarrow \quad \overline{s}- \underline{s}< \frac{2}{N+2} =\frac{2(N+1)}{N+2}-2+\frac{4}{N+2} <\underline{s}-2+r^\sharp,
\]
the conclusion follows from \eqref{eq:d-ell-cor-mod}.
\end{proof}

\begin{corollary}
\label{cor:higher-int-parab}
 Let $\partial \Omega \in C^2$. Assume that $p$, $q$, $a$, $b$ satisfy condition \eqref{eq:balance-p-q}, $a, b \in L^\infty(Q_T)$, $a(z) + b(z) \geq \alpha$ a.e. in $Q_T$, and $|\nabla a|, |\nabla b|\in L^d(Q_T)$ with $d$ satisfuing \eqref{eq:d-parab}. Then

\begin{equation}
\label{eq:int-1-parab}
\int_{Q_T} w_\epsilon^{\frac{2(\underline{s}(z)-1)-2+s+r}{2}}|\nabla u|^2\,dz
\leq \delta\int_{Q_T}w_{\epsilon}^{\frac{r}{2}}\mathcal{F}^2_{\epsilon}(z,\nabla u)\vert u_{xx}\vert ^2\,dz +C,
\end{equation}

\begin{equation}
\label{eq:int-2-parab}
\int_{Q_T} w_\epsilon^{\frac{2(\underline{s}(z)-1)+s+r}{2}}\,dz
\leq \delta\int_{Q_T}w_{\epsilon}^{\frac{r}{2}}\mathcal{F}^2_{\epsilon}(z,\nabla u)\vert u_{xx}\vert ^2\,dz +C
\end{equation}
with any $\delta>0$, $s\in (0,r^\sharp)$, and constants $C$ depending on the same quantities as the constants in \eqref{eq:d-r-ell-mod} and $T$.
\end{corollary}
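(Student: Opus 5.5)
The plan is to apply the elliptic inequalities of Lemma \ref{le:d-r-mod} and Corollary \ref{cor:d-1-cor-mod} on each time slice and then integrate in $t$. Fix $t\in(0,T)$ and consider $u(\cdot,t)\in C^2(\overline\Omega)$. The coefficients $a(\cdot,t)$, $b(\cdot,t)$ and exponents $p(\cdot,t)$, $q(\cdot,t)$ are functions of $x$ only. They satisfy \eqref{eq:balance-p-q}, $a+b\geq\alpha$, and $a,b\in L^\infty(\Omega)$. For a.e. $t$ we also have $|\nabla a(\cdot,t)|,|\nabla b(\cdot,t)|\in L^{d}(\Omega)$, by Fubini, since $|\nabla a|^d\in L^1(Q_T)$.

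\textbf{Step 1: inequality \eqref{eq:int-1-parab}.} Apply \eqref{eq:d-r-ell-mod} at each such $t$ with the fixed $\delta$ and $s$. This gives
\[
\int_\Omega w_\epsilon^{\frac{2(\underline{s}(z)-1)-2+s+r}{2}}|\nabla u|^2\,dx\leq \delta\int_\Omega w_\epsilon^{\frac r2}\mathcal F_\epsilon^2(z,\nabla u)|u_{xx}|^2\,dx + C(t).
\]
Integrating over $(0,T)$ yields \eqref{eq:int-1-parab}, provided $\int_0^T C(t)\,dt$ is bounded by a constant of the admissible type.

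\textbf{Step 2: controlling $C(t)$ (main obstacle).} The constant of Lemma \ref{le:d-r} is stated to depend on $\alpha,N,p^\pm,q^\pm,s,r,\delta,\|u\|_{2,\Omega}$. Since $\|u(\cdot,t)\|_{2,\Omega}\leq \sup_t\|u(t)\|_{2,\Omega}$, that dependence is harmless once the solution is bounded in $L^\infty(0,T;L^2(\Omega))$, which is how the corollary will be used.

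The real issue is the implicit dependence of $C(t)$ on $\|\nabla a(\cdot,t)\|_{d,\Omega}$ and $\|\nabla b(\cdot,t)\|_{d,\Omega}$, which enter the proof of Lemma \ref{le:d-r} through the terms produced when the coefficients are differentiated. I would go back into that proof and track the dependence. There the gradient of the coefficients is absorbed by Hölder's and Young's inequalities, so $C(t)$ should have the form $C_0\bigl(1+\|\nabla a(\cdot,t)\|_{d,\Omega}^{d}+\|\nabla b(\cdot,t)\|_{d,\Omega}^{d}\bigr)$, or at least a power at most $d$. If the exponent exceeds $d$, I would instead perform the Hölder step directly in $Q_T$ rather than slice by slice. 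The strict inequality in \eqref{eq:d-parab} leaves room for this, so the product of $|\nabla a|$ with a power of $w_\epsilon$ can be estimated in the space-time norm. Then $\int_0^TC(t)\,dt\leq C_0\bigl(T+\|\nabla a\|_{d,Q_T}^d+\|\nabla b\|_{d,Q_T}^d\bigr)$, which depends only on the allowed quantities and $T$.

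\textbf{Step 3: inequality \eqref{eq:int-2-parab}.} I would use the pointwise bound from the proof of Corollary \ref{cor:d-1-cor-mod}, namely $w_\epsilon^\gamma\leq (2+2^\gamma)(1+w_\epsilon^{\gamma-1}|\nabla u|^2)$ with $\gamma=\frac{2(\underline s(z)-1)+s+r}{2}$. The exponent $\gamma$ is bounded above by $\overline{s}^++r+1$, so the constant $2+2^\gamma$ is uniform in $z$. Integrating over $Q_T$ and applying \eqref{eq:int-1-parab} with $\delta$ replaced by $\delta/(2+2^{\gamma^+})$ gives \eqref{eq:int-2-parab}, with an additional constant depending on $|Q_T|=T|\Omega|$.
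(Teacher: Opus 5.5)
Your proposal is correct and matches what the paper intends. The paper states the corollary without proof, as the time-integrated version of Lemma~\ref{le:d-r-mod} and Corollary~\ref{cor:d-1-cor-mod}: apply them on each slice, replace $\|u\|_{2,\Omega}$ by $\sup_t\|u(t)\|_{2,\Omega}$, and pick up the factor $T$. Your Step~2 discussion of how the slice constant depends on $\|\nabla a(\cdot,t)\|_{d,\Omega}$ and $\|\nabla b(\cdot,t)\|_{d,\Omega}$ goes beyond the paper, which lists no such dependence. It is extra care rather than a different route.
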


\subsection{The Green formula}
\begin{lemma}[Proposition 3.1, \cite{Arora-Shmarev-JMAA-2025}]
\label{le:Green-1}
    Let $\partial\Omega\in C^2$ and $\vec \alpha$, $\vec \beta$ be arbitrary vectors with the components
\begin{equation}
\label{eq:vec-reg}
\alpha_i \in C^0(\overline{\Omega})\cap C^1(\Omega), \qquad \beta_i\in C^1(\overline{\Omega})\cap C^2(\Omega).
\end{equation}
Denote by $\vec\nu$ the unit exterior normal to $\partial\Omega$ and represent $\vec \alpha=\alpha_\nu\vec\nu+ \vec\alpha_\tau$, $\vec \beta=\beta_\nu\vec\nu+ \vec\beta_\tau$, where $\alpha_{\nu}=(\vec \alpha,\vec \nu)$, $\beta_\nu=(\vec \beta,\vec \nu)$ are the normal components of $\vec \alpha$, $\vec \beta$, and $\vec\alpha_\tau$, $\vec\beta_{\tau}$ belong in the tangent plane to $\partial\Omega$. If $\vec\alpha_\tau=0$ and $\vec\beta_{\tau}=0$, then
\begin{equation}
\label{eq:by-parts}
\begin{split}
\int_\Omega & \operatorname{div}\vec \alpha \operatorname{div}\vec \beta\,dx  = - \int_{\partial\Omega}\alpha_{\nu}\beta_{\nu}\operatorname{trace}\mathcal{B} \,dS + \int_\Omega \sum_{i,j=1}^N D_j\alpha_i D_i\beta_j\,dx
\end{split}
\end{equation}
where $\vec \nu$ is the exterior unit normal to $\partial \Omega$
\end{lemma}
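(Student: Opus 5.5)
The plan is to integrate by parts twice. I would first prove \eqref{eq:by-parts} on smooth subdomains, where everything is classical, and then pass to the limit. The key point is that the boundary terms combine into the tangential divergence of $\vec\beta$. Since $\vec\beta$ is normal on $\partial\Omega$, this tangential divergence reduces to $\beta_\nu$ times the curvature of $\partial\Omega$.

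\emph{Formal computation.} Assume first that $\alpha_i\in C^1(\overline\Omega)$ and $\beta_i\in C^2(\overline\Omega)$. The divergence theorem gives
\[
\int_\Omega \operatorname{div}\vec\alpha\operatorname{div}\vec\beta\,dx=\int_{\partial\Omega}\alpha_\nu\operatorname{div}\vec\beta\,dS-\int_\Omega \sum_{i}\alpha_i D_i\Big(\sum_j D_j\beta_j\Big)dx .
\]
Next I write $\alpha_iD_iD_j\beta_j=\alpha_iD_j(D_i\beta_j)$ and integrate by parts in $x_j$:
\[
\int_\Omega\sum_{i,j}\alpha_iD_jD_i\beta_j\,dx=\int_{\partial\Omega}\sum_{i,j}\alpha_i\nu_jD_i\beta_j\,dS-\int_\Omega\sum_{i,j}D_j\alpha_iD_i\beta_j\,dx .
\]
Combining the two identities and using $\vec\alpha=\alpha_\nu\vec\nu$ on $\partial\Omega$, the boundary term becomes
\[
\int_{\partial\Omega}\alpha_\nu\Big(\operatorname{div}\vec\beta-\sum_{i,j}\nu_i\nu_jD_i\beta_j\Big)dS=\int_{\partial\Omega}\alpha_\nu\operatorname{div}_\tau\vec\beta\,dS .
\]
Here $\operatorname{div}_\tau$ is the tangential divergence on $\partial\Omega$.

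\emph{Geometric identity.} Because $\partial\Omega\in C^2$, the signed distance function is $C^2$ in a tubular neighbourhood of $\partial\Omega$. Its gradient therefore gives a $C^1$ extension of $\vec\nu$, with $\nabla\vec\nu$ symmetric and $(\nabla\vec\nu)\vec\nu=0$. On $\partial\Omega$ we have $\vec\beta=\beta_\nu\vec\nu$, so tangential derivatives of $\vec\beta$ coincide with those of $\beta_\nu\vec\nu$. Hence
\[
\operatorname{div}_\tau\vec\beta=\beta_\nu\operatorname{div}_\tau\vec\nu+\nabla_\tau\beta_\nu\cdot\vec\nu .
\]
The last term vanishes because $\nabla_\tau\beta_\nu$ is tangential. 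The quantity $\operatorname{div}_\tau\vec\nu$ is the trace of the Weingarten map, i.e. the sum of the principal curvatures. With the sign convention for $\mathcal B$ used in the statement, this equals $-\operatorname{trace}\mathcal B$, which yields \eqref{eq:by-parts}. I would fix the orientation and sign by checking the formula on a ball.

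\emph{Approximation, the main obstacle.} Under the actual hypotheses \eqref{eq:vec-reg}, $\alpha_i$ is only continuous up to the boundary and $\beta_i$ has second derivatives only in the interior. I would therefore apply the computation on the parallel domains $\Omega_\delta=\{x\in\Omega:\operatorname{dist}(x,\partial\Omega)>\delta\}$. For small $\delta$ these have $C^2$ boundaries, and their normals and second fundamental forms converge uniformly to those of $\partial\Omega$.

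The difficulty is that on $\partial\Omega_\delta$ the tangential components of $\vec\alpha$ and $\vec\beta$ need not vanish. So I must keep the full boundary integrand
\[
\alpha_\nu\operatorname{div}\vec\beta-\sum_{i,j}\alpha_i\nu_jD_i\beta_j ,
\]
which involves only $\vec\alpha$ and first derivatives of $\vec\beta$. These are continuous up to $\partial\Omega$. Hence this integrand, transported to $\partial\Omega$ via the normal projection, converges uniformly to its value on $\partial\Omega$, where the tangential parts do vanish. The geometric identity above is then applied only in the limit.

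For the interior integrals I let $\delta\to0$ by monotone or dominated convergence. This requires the integrals over $\Omega$ to be understood as limits over $\Omega_\delta$, or $D_j\alpha_iD_i\beta_j\in L^1(\Omega)$. In the intended application this integrability holds because the approximating solutions from Lemma~\ref{le:class-sol-existence} are smooth up to the boundary.
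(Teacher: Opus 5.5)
Your proposal is correct and follows essentially the same route as the source the paper imports this lemma from. Two integrations by parts leave exactly the boundary integrand $\alpha_\nu\operatorname{div}\vec\beta-((\vec\alpha\cdot\nabla)\vec\beta)\cdot\vec\nu$ that the paper displays right after the statement, and for normal fields the tangential divergence of $\beta_\nu\vec\nu$ reduces this to $-\alpha_\nu\beta_\nu\operatorname{trace}\mathcal{B}$, with the sign convention you correctly calibrate on a ball; your approximation by the parallel domains $\Omega_\delta$, together with the caveat that the interior integrals must be integrable (or read as limits) under \eqref{eq:vec-reg}, is a sound treatment of a regularity point the paper does not discuss.
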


Given a function $u\in C^2(\overline{\Omega})\cap C^3(\Omega)$ and  $r\geq 0$, consider the vectors

\begin{equation}
\label{eq:p+r}
\begin{split}
& \vec \alpha= \mathcal{F}_\epsilon(x,\nabla u)\nabla u\equiv \left(a_{\epsilon}w_\epsilon^{\frac{p(x)-2}{2}} + b_\epsilon w_\epsilon ^{\frac{q(x)-2}{2}}\right)\nabla u,\qquad \vec \beta=  w_\epsilon^{\frac{r}{2}}\vec{\alpha}.
\end{split}
\end{equation}
By the straightforward computation

\[
\begin{split}
D_j \alpha_i & = \left(a_{\epsilon}w_\epsilon^{\frac{p-2}{2}}+b_\epsilon w_\epsilon^{\frac{q-2}{2}}\right)D^2_{ij}u + \left((p-2)a_{\epsilon}w_{\epsilon}^{\frac{p-2}{2}-1}+(q-2) b_\epsilon w_\epsilon^{\frac{q-2}{2}-1}\right) D_iu \sum_{k=1}^N D_ku D^2_{kj}u
\\
& \quad
+ \frac{1}{2} \left(a_{\epsilon}w_\epsilon^{\frac{p-2}{2}}D_iuD_jp+b_\epsilon w_\epsilon ^{\frac{q-2}{2}} D_iuD_jq\right)\ln w_\epsilon +w_\epsilon^{\frac{p-2}{2}}D_iD_ja  + w_\epsilon^{\frac{q-2}{2}} D_i u D_j b,
\end{split}
\]

\[
\begin{split}
D_i \beta_j & = w_\epsilon^{\frac{r}{2}}\left[\left(a_{\epsilon}w_\epsilon^{\frac{p-2}{2}}+b_\epsilon w_\epsilon^{\frac{q-2}{2}}\right)D^2_{ij}u + \left((p-2)a_{\epsilon}w_{\epsilon}^{\frac{p-2}{2}-1}+(q-2) b_\epsilon w_\epsilon^{\frac{q-2}{2}-1}\right) D_ju \sum_{k=1}^N D_ku D^2_{ki}u\right.
\\
& \quad
\left.+ \frac{1}{2} \left(a_{\epsilon}w_\epsilon^{\frac{p-2}{2}}D_juD_ip+b_\epsilon w_\epsilon ^{\frac{q-2}{2}} D_juD_iq\right)\ln w_\epsilon + w_\epsilon^{\frac{p-2}{2}}D_juD_ia  + w_\epsilon^{\frac{q-2}{2}} D_j u D_i b
\right]
\\
& \quad + \left.r w_{\epsilon}^{\frac{r}{2}-1} \left(a_{\epsilon}w_\epsilon^{\frac{p-2}{2}}+b_\epsilon w_\epsilon^{\frac{q-2}{2}}\right) D_ju\sum_{k=1}^N D_iu D^2_{ki}u D_ju\right)\sum_{k=1}^N \eta_i \eta_k D^2_{kj}u
\end{split}
\]

For every $\epsilon>0$,
the inclusions $\beta_i\in C^2({\Omega})$ hold if $u\in C^3({\Omega})$ and $p,q,a,b \in C^2(\Omega)$. Since
\[
\begin{split}
&
\alpha_\nu=\vec{\alpha}\cdot \vec{\nu}=\vec{\alpha}\cdot \dfrac{\nabla u}{|\nabla u|}= \left(a_{\epsilon}w_\epsilon^{\frac{p(x)-2}{2}} + b_\epsilon w_\epsilon^{\frac{q(x)-2}{2}}\right)|\nabla u|,\qquad \vec \alpha_\tau=\vec \alpha-\alpha_\nu \vec\nu=0,
\\
& \beta_\nu=
\vec{\beta}\cdot \dfrac{\nabla u}{|\nabla u|}= w_\epsilon^{\frac{r}{2}}\alpha_\nu ,\qquad \vec \beta_\tau=\vec \beta-\beta_\nu \vec\nu=0,
\end{split}
\]
the boundary integral in \eqref{eq:by-parts} transforms into

\[
\begin{split}
\int_{\partial \Omega}  & \left(\alpha_\nu \operatorname{div}\vec \beta - ((\vec \alpha\cdot \nabla)\vec \beta)\cdot \vec \nu\right)\,dS
= -\int_{\partial\Omega} w_\epsilon^{\frac{r}{2}}\left(a_{\epsilon}w_\epsilon^{\frac{p-2}{2}} +  b_\epsilon w_\epsilon^{\frac{q-2}{2}} \right)^2|\nabla u|^2\operatorname{trace}\mathcal{B}\,dS
\end{split}
\]
and formula \eqref{eq:by-parts}  becomes

\begin{equation}
\label{eq:double-final-e-start}
\begin{split}
\int_{\Omega} \operatorname{div} & \left(\mathcal{F}_\epsilon(z,w_\epsilon)\nabla u\right)
\operatorname{div}\left( w_\epsilon ^{\frac{r}{2}}\mathcal{F}_{\epsilon}(z,\nabla u)\nabla u\right)\,dx
= -\int_{\partial\Omega} w_\epsilon^{\frac{r}{2}}\left(a_{\epsilon}w_\epsilon^{\frac{p-2}{2}} +  b_\epsilon w_\epsilon^{\frac{q-2}{2}} \right)^2|\nabla u|^2\operatorname{trace}\mathcal{B}\,dS
\\
& \qquad + \sum_{i,j=1}^N\int_{\Omega}D_{i}\left( \left(a_{\epsilon}w_\epsilon^{\frac{p-2}{2}} + b_\epsilon  w_\epsilon^{\frac{q-2}{2}} \right)D_{j}u \right) D_{j}\left(w_\epsilon^{\frac{r}{2}}\left(a_{\epsilon}w_\epsilon^{\frac{p-2}{2}} + b_\epsilon w_\epsilon ^{\frac{q-2}{2}}\right)D_{i}u\right)\,dx.
\end{split}
\end{equation}

Since $\operatorname{trace}\mathcal{B}\leq 0$ for smooth convex domains, for such domain the boundary integral in \eqref{eq:double-final-e-start} is nonpositive. In the general case, the integral over $\partial\Omega$ is estimated by virtue of the following assertion.

\begin{lemma}
\label{le:trace-main}
Let in the conditions of Lemma \ref{le:d-r}, $u\in C^2(\overline{\Omega})\cap C^3(\Omega)$, $u=0$ on $\partial\Omega$. If $r\geq 0$,
\begin{equation}
\label{eq:d-boundary}
\begin{split}
& |\nabla a|, |\nabla b|\in L^d(\Omega)\quad \text{with $\displaystyle d > 1+\frac{N+2}{4(1-\beta)}\left(2(\overline{s}^+-1)+ r\right)$},
\\
& \text{either $d=\infty$ and in the balance condition \eqref{eq:balance-p-q} $\beta=1$, or $d$ is finite and $\beta\in (0,1)$.}
\end{split}
\end{equation}
Then for every $\lambda\in (0,1)$
\begin{equation}
\label{eq:trace-3} \int_{\partial \Omega} w_{\epsilon}^{\frac{r}{2}} \mathcal{F}^2_{\epsilon}(x,\nabla u)\vert \nabla u\vert ^{2}\,dS\leq
\lambda \int_{\Omega} w_{\epsilon}^{\frac{r}{2}}\mathcal{F}^2_{\epsilon}(x,\nabla u) \vert u_{xx}\vert ^2\,dx+ C
\end{equation}
with a constant
$C=C'(\lambda,
\underline{s}^-, \overline{s}^+, N, L_{p,q}, \alpha, r, \|u\|_{2,\Omega}) + C''\left( \|\nabla a\|_{d,\Omega}, \|\nabla b\|_{d,\Omega}\right)
$.
\end{lemma}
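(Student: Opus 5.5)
We plan to convert the boundary integral into a volume integral by the divergence theorem, using a smooth extension of the exterior normal, and then to absorb the resulting terms by means of the interpolation inequalities of Lemma \ref{le:d-r-mod} and Corollaries \ref{cor:d-1-cor-mod}, \ref{cor:inter-extra}. Since $\partial\Omega\in C^2$, there is a vector field $\vec{\mathfrak n}\in C^1(\overline\Omega)^N$ with $\vec{\mathfrak n}=\vec\nu$ on $\partial\Omega$ and $|\vec{\mathfrak n}|\le 1$. Set $\Phi:=w_\epsilon^{\frac r2}\mathcal{F}_\epsilon^2(x,\nabla u)|\nabla u|^2$. Then
\[
\int_{\partial\Omega}\Phi\,dS=\int_{\partial\Omega}\Phi\,\vec{\mathfrak n}\cdot\vec\nu\,dS=\int_\Omega\left(\Phi\operatorname{div}\vec{\mathfrak n}+\nabla\Phi\cdot\vec{\mathfrak n}\right)dx .
\]
The first term is bounded by $C\int_\Omega w_\epsilon^{\frac{r}{2}}\mathcal{F}^2_\epsilon|\nabla u|^2\,dx\le C\int_\Omega w_\epsilon^{\frac{2(\overline{s}-1)+r}{2}}dx$. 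Because $\overline{s}-\underline{s}<\frac{2}{N+2}$, we have $2(\overline{s}-1)+r\le 2(\underline{s}-1)+r+s$ for a suitable $s<r^\sharp$, so this term is estimated by Corollary \ref{cor:d-1-cor-mod} as $\delta\int_\Omega w_\epsilon^{\frac r2}\mathcal{F}^2_\epsilon|u_{xx}|^2dx+C$.

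The main work is in $\nabla\Phi$. Differentiating yields three kinds of terms.

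(i) Terms containing $u_{xx}$, namely $w_\epsilon^{\frac r2}\mathcal{F}_\epsilon^2|\nabla u||u_{xx}|$ together with the terms coming from $D_k$ of $w_\epsilon^{\frac{p-2}{2}}$ and $w_\epsilon^{r/2}$. Each is bounded by $C(p^\pm,q^\pm,r)\,w_\epsilon^{\frac r2}\mathcal{F}_\epsilon^2|\nabla u||u_{xx}|$. By Cauchy's inequality this is at most
\[
\frac{\lambda}{2}w_\epsilon^{\frac r2}\mathcal{F}_\epsilon^2|u_{xx}|^2+C_\lambda w_\epsilon^{\frac r2}\mathcal{F}_\epsilon^2|\nabla u|^2,
\]
and the second piece is handled as above.

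(ii) Terms containing $\nabla p,\nabla q$. These give $L_{p,q}w_\epsilon^{\frac r2}\mathcal{F}^2_\epsilon|\nabla u|^2|\ln w_\epsilon|$. The logarithm is absorbed by a slight increase of the exponent, $|\ln w_\epsilon|\le C_\mu(w_\epsilon^{\mu}+w_\epsilon^{-\mu})$, with $\mu$ small enough that the total power stays below $2(\underline{s}-1)+r+r^\sharp$. The negative power is harmless since $|\nabla u|^2\le w_\epsilon$. Corollary \ref{cor:d-1-cor-mod} then applies again.

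(iii) Terms containing $\nabla a,\nabla b$, such as $a_\epsilon|\nabla a|w_\epsilon^{\frac{2(p-2)+r}{2}}|\nabla u|^2$ and the cross terms. This is the main obstacle. When $\beta=1$ we have $d=\infty$, the factors $|\nabla a|,|\nabla b|$ are bounded, and the argument is as in (ii). When $\beta<1$ we apply H\"older's inequality with exponents $d$ and $d'$:
\[
\int_\Omega|\nabla a|\,w_\epsilon^{\frac{2(\overline{s}-1)+r}{2}}dx\le\|\nabla a\|_{d,\Omega}\left(\int_\Omega w_\epsilon^{\frac{d'(2(\overline{s}-1)+r)}{2}}dx\right)^{1/d'} .
\]
We then need $d'(2(\overline{s}-1)+r)<2(\underline{s}-1)+r+r^\sharp$. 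Using $\overline{s}-\underline{s}\le\frac{2\beta}{N+2}$, the margin available is $\frac{4(1-\beta)}{N+2}$, and the lower bound on $d$ in \eqref{eq:d-boundary} is designed so that $\frac{1}{d-1}(2(\overline{s}^+-1)+r)<\frac{4(1-\beta)}{N+2}$. This is exactly what makes the inequality hold. The remaining integral is then bounded via Corollary \ref{cor:d-1-cor-mod}, followed by Young's inequality to remove the power $1/d'$ (if the exponent after multiplication by $d'$ is not of the required form, we first interpolate with $L^1$-bounded lower powers). The outcome is $\delta\int_\Omega w_\epsilon^{\frac r2}\mathcal{F}^2_\epsilon|u_{xx}|^2dx+C''(\|\nabla a\|_{d,\Omega},\|\nabla b\|_{d,\Omega})$.

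The point to check carefully is that $a_\epsilon^2$ and $b_\epsilon^2$ can be traded for single powers of $a_\epsilon,b_\epsilon$ consistently with the weights in Lemma \ref{le:d-r-mod}, using boundedness of $a$ and $b$. Finally, summing (i)--(iii) and choosing $\delta$ small in terms of $\lambda$ gives \eqref{eq:trace-3}.
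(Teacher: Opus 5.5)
Your proposal is correct and follows essentially the same route as the paper: you apply the divergence theorem with a field transversal to $\partial\Omega$, then absorb the $u_{xx}$-terms, the $\ln w_\epsilon$-terms and the $\nabla a,\nabla b$-terms via Corollaries \ref{cor:d-1-cor-mod}, \ref{cor:inter-extra} and an $L^d$ H\"older/Young step, which produces exactly the condition \eqref{eq:d-boundary}. The differences are cosmetic: the paper first majorizes the integrand by $2(a_\epsilon^2w_\epsilon^{p+\frac r2-1}+b_\epsilon^2w_\epsilon^{q+\frac r2-1})$ and uses a field $\vec\mu$ with $\vec\mu\cdot\vec\nu\ge 2\gamma>0$ instead of an exact normal extension, which makes the differentiation simpler; also, your pointwise bound $w_\epsilon^{\frac r2}\mathcal{F}_\epsilon^2|\nabla u|^2\le C w_\epsilon^{\frac{2(\overline{s}-1)+r}{2}}$ needs an additive constant where $w_\epsilon<1$.
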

\begin{proof}
By \cite[Lemma 1.5.1.9]{Grisvard-2011} there exist a constant $\gamma>0$ and a function $\vec \mu\in C^{\infty}(\overline{\Omega})^N$ such that $\vec \mu\cdot\nu \geq 2\gamma>0$ on $\partial\Omega$. Then
\[
\begin{split}
\gamma\int_{\partial\Omega} & w_\epsilon^{\frac{r}{2}}\mathcal{F}^2_\epsilon {x,\nabla u}|\nabla u|^2\,dS
=\gamma \int_{\partial\Omega} \left(a_\epsilon w_\epsilon^{\frac{p+\frac{r}{2}-2}{2}} + b_\epsilon w_\epsilon^{\frac{q+\frac{r}{2}-2}{2}} \right)^2|\nabla u|^2\,dS
\\
& \leq 2\gamma \int_{\partial\Omega} \left(a^2_\epsilon w_\epsilon^{p+\frac{r}{2}-2} + b^2_\epsilon w_\epsilon^{q+\frac{r}{2}-2} \right)|\nabla u|^2\,dS
\\
& \leq 2\gamma\int_{\partial\Omega} \left(a^2_\epsilon w_\epsilon^{p+\frac{r}{2}-1} + b^2_\epsilon w_\epsilon^{q+\frac{r}{2}-1} \right)\,dS
\\
&
\leq \int_{\Omega}\operatorname{div}\left( \left(a^2_\epsilon w_\epsilon^{p+\frac{r}{2}-1} + b^2_\epsilon w_\epsilon^{q+\frac{r}{2}-1} \right)\vec \mu\right)\,dx
 \\
 &
 = \int_{\Omega} \vec \mu \cdot\nabla \left(a^2_\epsilon w_\epsilon^{p+\frac{r}{2}-1} + b^2_\epsilon w_\epsilon^{q+\frac{r}{2}-1} \right)\,dx
 + \int_{\Omega}(\operatorname{div}\vec \mu ) \left(a^2_\epsilon w_\epsilon^{p+\frac{r}{2}-1} + b^2_\epsilon w_\epsilon^{q+\frac{r}{2}-1} \right)\,dx
\\
& \leq C_1 \int_{\Omega} \left(a^2_\epsilon w_\epsilon^{p+\frac{r}{2}-1} + b^2_\epsilon w_\epsilon^{q+\frac{r}{2}-1} \right)\,dx\\
& \quad + C_2 
\int_{\Omega} \left(a^2_\epsilon w_\epsilon^{p+\frac{r}{2}-2} + b^2_\epsilon w_\epsilon^{q+\frac{r}{2}-2\quad } \right) |\nabla u||u_{xx}| \,dx
\\
&
\quad + C_3 \int_{\Omega} \left(a^2_\epsilon w_\epsilon^{p+\frac{r}{2}-1} |\nabla p| + b^2_\epsilon w_\epsilon^{q+\frac{r}{2}-1} |\nabla q|\right) |\ln w_\epsilon|\,dx
\\
& \quad
+ C_4\int_{\Omega} \left(w_\epsilon^{p+\frac{r}{2}-1} |\nabla a|+ w_\epsilon^{q+\frac{r}{2}-1} |\nabla b|\right)\,dx
\equiv C_1\mathcal{I}_1 +C_2\mathcal{I}_2 + C_3\mathcal{I}_3 + C_4 \mathcal{I}_4.
\end{split}
\]
The integrals $\mathcal{I}_i$ are estimated separately. The estimate on $\mathcal{I}_1$ follows from Corollaries \ref{cor:d-1-cor-mod}, \ref{cor:inter-extra} that

\[
\mathcal{I}_1\leq C\left(1+\int_\Omega w_\epsilon^{\overline{s}+\frac{r}{2}-1}\,dx\right)\leq C'+\delta \int_\Omega w_\epsilon^{\frac{r}{2}}\mathcal{F}_{\epsilon}^2(x,\nabla u)\,dx.
\]
By Young's inequality, for every $\delta>0$
\[
\begin{split}
\mathcal{I}_2 & \leq \int_\Omega \left(a_\epsilon w_\epsilon^{\frac{p+\frac{r}{2}-1}{2}}\right) \left(a_\epsilon w_{\epsilon}^{\frac{p+\frac{r}{2}-2}{2}}|u_{xx}|\right)\,dx +\int_\Omega \left(b_\epsilon w_\epsilon^{\frac{q+\frac{r}{2}-1}{2}}\right) \left(b_\epsilon w_{\epsilon}^{\frac{q+\frac{r}{2}-2}{2}}|u_{xx}|\right)\,dx
\\
& \leq C(a^++b^+)\int_{\Omega}\left(w_{\epsilon}^{p+\frac{r}{2}-1} + w_{\epsilon}^{q+\frac{r}{2}-1}\right) + \delta\int_{\Omega}w_\epsilon^{\frac{r}{2}}\mathcal{F}_{\epsilon}^2(x,\nabla u)|u_{xx}|^2\,dx
\\
& \leq C'\mathcal{I}_1 + \delta\int_{\Omega}w_\epsilon^{\frac{r}{2}}\mathcal{F}_{\epsilon}^2(x,\nabla u)|u_{xx}|^2\,dx.
\end{split}
\]
To estimate $\mathcal{I}_3$ we employ the following elementary inequality: for every $\theta, \gamma>0$ there is a constant $C=C(\theta, \gamma)$ such that

\begin{equation}
\label{eq:elem-ln}
|x^\theta  \ln x|\leq C(1+x^{\theta + \gamma})\quad \text{for $x>0$}.
\end{equation}
Assumption \eqref{eq:balance-p-q} allows one to choose $\gamma>0$ so small that

\[
\overline{s} +\frac{r}{2}-1+\gamma<\underline{s}-1+\frac{r}{2}+\dfrac{2}{N+2}.
\]
Using \eqref{eq:elem-ln} and \eqref{eq:d-ell-cor-mod} we estimate

\[
\begin{split}
\mathcal{I}_3 & \leq C''\left(1+\mathcal{I}_1+\int_\Omega w_\epsilon^{\overline{s} +\frac{r}{2}-1+\gamma}\,dx\right)
\\
&
\leq C\left(1+\mathcal{I}_1\right)+\delta \int_{\Omega}w_\epsilon^{\frac{r}{2}}\mathcal{F}_{\epsilon}^2(x,\nabla u)|u_{xx}|^2\,dx
\end{split}
\]
with an arbitrary $\delta>0$ and $C''$ depending on $\delta$. The same estimate with $\theta=0$ holds for $\mathcal{I}_1$.

Let $d$ be finite and $\beta\in (0,1)$. By Young's inequality

\[
\begin{split}
\mathcal{I}_4 & \leq \int_{\Omega}\left(|\nabla a|^d+|\nabla b|^d\right)\,dx + \int_\Omega \left(w_\epsilon^{\frac{d}{d-1}\left(p+\frac{r}{2}-1\right)} +w_\epsilon^{\frac{d}{d-1}\left(q+\frac{r}{2}-1\right)}\right)\,dx
\\
& \leq C+\|\nabla a\|^d_{d,\Omega}+\|\nabla b\|^d_{d,\Omega}+\int_{\Omega} w_\epsilon^{\frac{d}{d-1}\left(\overline{s}+\frac{r}{2}-1\right)}\,dx
\end{split}
\]
The last term of this inequality is estimated by \eqref{eq:d-ell-cor-mod}, provided that

\[
\frac{d}{d-1}\left(\overline{s}+\frac{r}{2}-1\right) <\underline{s}-1+\frac{r}{2}+\frac{2}{N+2}\qquad \Leftrightarrow \qquad (\overline{s}-\underline{s}) +\dfrac{1}{d-1}\left(\overline{s}+\frac{r}{2}-1\right)<\dfrac{2}{N+2}.
\]
The last inequality is fulfilled because of assumption \eqref{eq:balance-p-q} and condition \eqref{eq:d-boundary}:

\[
\begin{split}
(\overline{s}-\underline{s}) & +\dfrac{1}{d-1}\left(\overline{s}+\frac{r}{2}-1\right)<\dfrac{2\beta}{N+2}
+\dfrac{1}{d-1}\left(\overline{s}+\frac{r}{2}-1\right)<\dfrac{2}{N+2}
\\
& \Leftrightarrow \qquad d>1+\dfrac{N+2}{4(1-\beta)}\left(2(\overline{s}-1)+r\right).
\end{split}
\]
It follows that

\[
\mathcal{I}_4\leq C+\delta \int_\Omega w_{\epsilon}^{\frac{r}{2}}\mathcal{F}_\epsilon^2(x,\nabla u)|u_{xx}|^2\,dx
\]
with any $\delta>0$ and $C=C(\delta)$. Gathering the estimates on $\mathcal{I}_i$ we arrive at inequality \eqref{eq:trace-3}. To complete the proof, it is sufficient to notice that in the case $d=\infty$

\[
\mathcal{I}_4\leq \left(\|\nabla a\|_{\infty,\Omega}+\|\nabla a\|_{\infty,\Omega}\right)\left(1+\mathcal{I}_1\right).
\]
\end{proof}

\section{A priori estimates}
\subsection{The time derivative}
The main estimates on the classical solution $u$ and its derivatives follow from the equality obtained by multiplication of equation \eqref{eq:main-reg} by $\operatorname{div} \left(w_\epsilon^{\frac{r}{2}}\mathcal{F}_\epsilon(z,\nabla u)\nabla u\right)$ and integration over $\Omega$:

\begin{equation}
\label{eq:energy-1}
\begin{split}
\int_{\Omega}u_t \operatorname{div} \left(w_\epsilon^{\frac{r}{2}}\mathcal{F}_\epsilon(z,\nabla u)\nabla u\right)\,dx & - \int_\Omega \operatorname{div} \left(w_\epsilon^{\frac{r}{2}}\mathcal{F}_\epsilon(z,\nabla u)\nabla u\right)\operatorname{div} \left(\mathcal{F}_\epsilon(z,\nabla u)\nabla u\right)\,dx
\\
& = \int_\Omega f \operatorname{div} \left(w_\epsilon^{\frac{r}{2}}\mathcal{F}_\epsilon(z,\nabla u)\nabla u\right)\,dx.
\end{split}
\end{equation}

{\bf (a)} The first term on the left-hand side on \eqref{eq:energy-1} can be written as

\[
\begin{split}
-\int_{\Omega} & u_t\operatorname{div} \left(w_\epsilon^{\frac{r}{2}}\mathcal{F}_\epsilon(z,\nabla u)\nabla u\right)\,dx
 =\int_\Omega \left(a_\epsilon w_\epsilon^{\frac{r+p-2}{2}}+b_\epsilon w_\epsilon^{\frac{r+q-2}{2}}\right)(\nabla u\cdot \nabla u_t)\,dx
\\
&
= \frac{1}{2}\int_\Omega \left(a_\epsilon w_\epsilon^{\frac{r+p-2}{2}}+b_\epsilon w_\epsilon^{\frac{r+q-2}{2}}\right)(w_\epsilon)_t\,dx
\\
&
= \dfrac{1}{2}\dfrac{d}{dt}\int_{\Omega}\left(\int_{0}^{w_\epsilon}\left(a_\epsilon s^{\frac{r+p-2}{2}}+b_\epsilon s^{\frac{r+q-2}{2}}\right)\,ds\right)\,dx - \frac{1}{2}\int_{\Omega}\int_{0}^{w_\epsilon} s^{\frac{r}{2}}\left(\mathcal{F}_{\epsilon}((x,t),s)\right)_t\,ds dx
\\
& =  \dfrac{1}{2}\dfrac{d}{dt}\int_{\Omega}\left(\int_{0}^{w_\epsilon}s^{\frac{r}{2}}\mathcal{F}_{\epsilon}((x,t), s)\,ds\right)\,dx - \frac{1}{2}\int_{\Omega}\int_{0}^{w_\epsilon} s^{\frac{r}{2}}\left(\mathcal{F}_{\epsilon}((x,t),s)\right)_t\,ds dx.
\end{split}
\]
Using the easily verified formulas
\[
\begin{split}
\left(\mathcal{F}_\epsilon((x,t),s)\right)_t & =
a_t s^{\frac{p-2}{2}} + b_ts^{\frac{q-2}{2}}
 + \frac{1}{2}a_\epsilon s^{\frac{p-2}{2}}\ln s p_t+ \frac{1}{2}b_\epsilon s^{\frac{q-2}{2}}\ln sq_t,
\end{split}
\]

\[
\begin{split}
\frac{1}{2}\int_{0}^{w_\epsilon}s^{\frac{r}{2}} &  \mathcal{F}_\epsilon(z,s)\,ds = \frac{1}{2}\int_0^{w_\epsilon} \left(a_\epsilon s^{\frac{p+r-2}{2}}+b_{\epsilon}s^{\frac{q+r-2}{2}}\right)\,ds
= \frac{a_{\epsilon}}{p+r}w_\epsilon^{\frac{p+r}{2}} + \frac{b_{\epsilon}}{q+r}w_\epsilon^{\frac{q+r}{2}},
\end{split}
\]

\[
\begin{split}
\int_{0}^{w_\epsilon} s^{\frac{r}{2}} & \left(\mathcal{F}_{\epsilon}((x,t),s)\right)_t\,ds = \int_{0}^{w_\epsilon}\left(a_t s^{\frac{p+r-2}{2}} + b_ts^{\frac{q+r-2}{2}}
 + \frac{1}{2}a_\epsilon s^{\frac{p+r-2}{2}}\ln s p_t+ \frac{1}{2}b_\epsilon s^{\frac{q+r-2}{2}}\ln sq_t\right) \,ds
 \\
 & = \frac{2a_t}{p+r}w_\epsilon^{\frac{p+r}{2}} + \frac{2b_t}{q+r}w_\epsilon^{\frac{q+r}{2}} + \dfrac{a_\epsilon p_t}{p+r}w_\epsilon^{\frac{p+r}{2}}\ln w_\epsilon - \dfrac{a_\epsilon p_t}{p+r}\int_{0}^{w_\epsilon} s^{\frac{p+r-2}{2}}\,ds
 \\
 & \qquad + \dfrac{b_\epsilon q_t}{q+r}w_\epsilon^{\frac{q+r}{2}}\ln w_\epsilon - \dfrac{b_\epsilon q_t}{q+r}\int_{0}^{w_\epsilon} s^{\frac{q+r-2}{2}}\,ds
\\
 & = \frac{2a_t}{p+r}w_\epsilon^{\frac{p+r}{2}} - \dfrac{2a_\epsilon p_t}{(p+r)^2}w_\epsilon^{\frac{p+r}{2}} +  \dfrac{a_\epsilon p_t}{p+r}w_\epsilon^{\frac{p+r}{2}}\ln w_\epsilon
 +\frac{2b_t}{q+r}w_\epsilon^{\frac{q+r}{2}} - \dfrac{2b_\epsilon q_t}{(q+r)^2}w_\epsilon +  \dfrac{b_\epsilon q_t}{q+r}w_\epsilon^{\frac{q+r}{2}}\ln w_\epsilon,
\end{split}
\]
we obtain

\begin{equation}
\label{eq:by-parts-t}
\notag
\begin{split}
- \int_{\Omega} &  u_t\operatorname{div} \left(w_\epsilon^{\frac{r}{2}}\mathcal{F}_\epsilon(z,\nabla u)\nabla u\right)\,dx = \dfrac{1}{2}\dfrac{d}{dt}\int_{\Omega}\left(\int_{0}^{w_\epsilon}s^{\frac{r}{2}} \mathcal{F}_\epsilon(z,s)\,ds\right)\,dx
\\
& - \int_\Omega \left(\frac{a_t}{p+r}w_\epsilon^{\frac{p+r}{2}} - \dfrac{a_\epsilon p_t}{(p+r)^2}w_\epsilon^{\frac{p+r}{2}} +  \dfrac{a_\epsilon p_t}{2(p+r)}w_\epsilon^{\frac{p+r}{2}}\ln w_\epsilon\right)\,dx
\\
&
 - \int_\Omega\left(\frac{b_t}{q+r}w_\epsilon^{\frac{q+r}{2}} - \dfrac{b_\epsilon q_t}{(q+r)^2}w_\epsilon^{\frac{q+r}{2}} +  \dfrac{b_\epsilon q_t}{2(q+r)}w_\epsilon^{\frac{q+r}{2}}\ln w_\epsilon\right)\,dx
 \\
 & \equiv \dfrac{d}{dt}\left(\int_\Omega\left(\frac{a_{\epsilon}}{p+r}w_\epsilon^{\frac{p+r}{2}} + \frac{b_{\epsilon}}{q+r}w_\epsilon^{\frac{q+r}{2}}\right)\,dx\right) + \sum_{i=1}^6\mathcal{J}_i.
\end{split}
\end{equation}
The terms $\mathcal{J}_i$ are estimated with the help of \eqref{eq:d-ell-cor-mod-1}, Young's inequality, and inequality \eqref{eq:elem-ln}.
Then
\begin{equation}
\notag
\begin{split}
|\mathcal{J}_2| & + |\mathcal{J}_3| + |\mathcal{J}_5| + |\mathcal{J}_6|\leq C\left(1+\int_{\Omega}w_{\epsilon}^{\frac{\overline{s}+r+\theta}{2}}\,dx\right)
\end{split}
\end{equation}
with any $\theta\in \left(0, \frac{r^\sharp}{2} \right)$ and $C$ depending on $\theta$:

\begin{equation}
\label{eq:time-1}
|\mathcal{J}_2| + |\mathcal{J}_3| + |\mathcal{J}_5| + |\mathcal{J}_6|\leq \delta \int_{\Omega} w_\epsilon^{\frac{r}{2}}\mathcal{F}_\epsilon^2(z,\nabla u)|u_{xx}|^2\,dx+C
\end{equation}
with any $\delta>0$ and a constant $C$ depending on $\delta$ and the same magnitudes as the constant in \eqref{eq:d-ell-cor-mod}. It remains to estimate $\mathcal{J}_1$ and $\mathcal{J}_4$. Consider $\mathcal{J}_1$. By Young's inequality

\[
|\mathcal{J}_1| \leq \int_{\Omega}|a_t|^d\,dx + C\int_{\Omega} w_{\epsilon}^{\frac{d}{d-1}\frac{p+r}{2}}\,dx.
\]
The first term is bounded by assumption. The second one is estimated by virtue of \eqref{eq:d-ell-cor-mod-1}, provided that

\[
\frac{d}{d-1}(\overline{s}+r)=\overline{s}+r +\dfrac{1}{d-1}(\overline{s}+r)
<2(\underline{s}-1)+r +\dfrac{4}{N+2}.
\]
The last inequality can be written in the equivalent form

\[
2-\underline{s}(z)+(\overline{s}(z)-\underline{s}(z)) +\dfrac{1}{d-1}(\overline{s}(z)+r)<\dfrac{4}{N+2}.
\]
The validity of this inequality follows from assumption  \eqref{eq:balance-p-q}, \eqref{eq:reg-data} and the lower bound on $p^-,q^-$:

\[
\begin{split}
2-\underline{s}(z)& +(\overline{s}(z)-\underline{s}(z)) +\dfrac{1}{d-1}(\overline{s}(z)+r)
<
2-\underline{s}(z) + \frac{2\beta}{N+2} + \dfrac{1}{d-1}(\overline{s}(z)+r)
\\
&
<2\left(1-\dfrac{N+1}{N+2}\right)+ \frac{2\beta}{N+2} + \dfrac{1}{d-1}(\overline{s}(z)+r)
\\
&
=\frac{2(1+\beta)}{N+2} +\dfrac{1}{d-1}(\overline{s}(z)+r)<\frac{4}{N+2}.
\end{split}
\]
Solving for $d$ we obtain

\begin{equation}
\notag
d>1+\frac{N+2}{2(1-\beta)}\left(\overline{s}^++r\right),
\end{equation}
which is true by assumption \eqref{eq:reg-data}. It follows that for every $\delta>0$

\[
|\mathcal{J}_1| + |\mathcal{J}_4|\leq \delta \int_\Omega w_\epsilon^{\frac{r}{2}}\mathcal{F}_\epsilon^2(z,\nabla u)\,dx + C\left(1+\|a_t\|_{d,\Omega}^d + \|b_t\|_{d,\Omega}^d\right),
\]
provided that $\underline{s}^->\dfrac{2(N+1)}{N+2}$ and assumption \eqref{eq:reg-data} on $d$ is fulfilled.

Gathering the above estimates we find that for every $\delta>0$

\begin{equation}
\label{eq:time-2}
\begin{split}
-\int_{\Omega} u_t\operatorname{div} \left(w_\epsilon^{\frac{r}{2}}\mathcal{F}_\epsilon(z,\nabla u)\nabla u\right)\,dx & \geq \dfrac{d}{dt}\left(\int_\Omega\left(\frac{a_{\epsilon}}{p+r}w_\epsilon^{\frac{p+r}{2}} + \frac{b_{\epsilon}}{q+r}w_\epsilon^{\frac{q+r}{2}}\right)\,dx\right)
\\
&
- \delta \int_{\Omega} w_\epsilon^{\frac{r}{2}}\mathcal{F}_\epsilon^2(z,\nabla u)|u_{xx}|^2\,dx -C
\end{split}
\end{equation}
with a positive constant $C$ depending on $\delta. $

\medskip

\textbf{(b)} Estimates on the right-hand side of \eqref{eq:energy-1}. By the straightforward differentiation
\[
\begin{split}
\int_{\Omega} & f \operatorname{div}\left(w_\epsilon^{\frac{r}{2}}\mathcal{F}_\epsilon(z,\nabla u)\nabla u\right)\,dx
 =\int_\Omega f w_\epsilon^{\frac{r}{2}}\mathcal{F}_\epsilon(z,\nabla u)\Delta u \,dx
 \\
&
 + \int_\Omega w_\epsilon^{\frac{r}{2}}f\left((p-2)a_{\epsilon}w_{\epsilon}^{\frac{p-2}{2}-1}+(q-2) b_\epsilon w_\epsilon^{\frac{q-2}{2}-1}\right) \sum_{i=1}^ND_iu \sum_{k=1}^N D_ku D^2_{ki}u \,dx
\\
&
+ \frac{1}{2} \int_{\Omega}fw_\epsilon^{\frac{r}{2}}\left(a_{\epsilon}w_\epsilon^{\frac{p-2}{2}}\nabla u\nabla p+b_\epsilon w_\epsilon ^{\frac{q-2}{2}} \nabla u\nabla q\right)\ln w_\epsilon \,dx
\\
&
+ \int_{\Omega}f w_\epsilon^{\frac{r}{2}}\left(w_\epsilon^{\frac{p-2}{2}}\nabla u\nabla a  + w_\epsilon^{\frac{q-2}{2}} \nabla u\nabla b\right)\,dx
\\
& + r\int_\Omega f w_\epsilon^{\frac{r}{2}-1}\mathcal{F}_\epsilon(z,\nabla u)\sum_{i=1}^ND_iu\sum_{k=1}^N D_{k}u D_{ki}^2u\,dx \equiv \sum_{i=1}^5 \mathcal{I}_i.
\end{split}
\]
By Young's inequality

\[
\begin{split}
|\mathcal{I}_1| +|\mathcal{I}_2| +|\mathcal{I}_5| & \leq C\int_\Omega \left(w_\epsilon^{\frac{r}{2}}f^2\right)^{\frac{1}{2}} \left(w_\epsilon^{\frac{r}{2}}\mathcal{F}^2_\epsilon(z,\nabla u)|u_{xx}|^2\right)^{\frac{1}{2}}\,dx
\\
& \leq C_\delta \int_{\Omega}f^2w_\epsilon^{\frac{r}{2}}\,dx+\delta \int_{\Omega}w_\epsilon^{\frac{r}{2}}\mathcal{F}_\epsilon^2(z,\nabla u)|u_{xx}|^2\,dx
\end{split}
\]
with any $\delta\in (0,1)$, whereas

\[
\begin{split}
& |\mathcal{I}_3| \leq C\int_\Omega w_{\epsilon}^{\frac{r}{2}}|f|\left(a_\epsilon w_{\epsilon}^{\frac{p-1}{2}}|\ln w_{\epsilon}| + b_\epsilon w_{\epsilon}^{\frac{q-1}{2}}|\ln w_{\epsilon}|\right)\,dx\equiv \mathcal{I}_{3,p}+ \mathcal{I}_{3,q},
\\
& |\mathcal{I}_4| \leq \int_{\Omega} w_{\epsilon}^{\frac{r}{2}}|f|\left(w_{\epsilon}^{\frac{p-1}{2}}|\nabla a| + w_{\epsilon}^{\frac{q-1}{2}}|\nabla b|\right)\,dx\equiv \mathcal{I}_{4,p}+ \mathcal{I}_{4,q},
\end{split}
\]

The estimates on $\mathcal{I}_{3,p}$ and $\mathcal{I}_{3,q}$ are similar and follow from \eqref{eq:d-ell-cor-mod} and \eqref{eq:elem-ln}: for every $\theta\in (0,r^\sharp)$ there is a constant $C_\theta$ such that

\[
\begin{split}
|\mathcal{I}_{3,p}| & \leq \int_{\Omega}f^2w_\epsilon^{\frac{r}{2}}\,dx + \int_\Omega a_\epsilon^2 w_{\epsilon}^{\frac{2(p-1)+r}{2}}\ln^2w_\epsilon\,dx
\\
& \leq \int_{\Omega}f^2w_\epsilon^{\frac{r}{2}}\,dx + C_\theta\int_{\Omega} w_{\epsilon}^{\frac{2(p-1)+\theta+r}{2}}\,dx +C'
\\
& \leq \delta \int_\Omega w_{\epsilon}^{\frac{r}{2}}\mathcal{F}_\epsilon^{2}(z,\nabla u) |u_{xx}|^2 \,dx +\int_{\Omega}f^2w_\epsilon^{\frac{r}{2}}\,dx + C''
\end{split}
\]
with $C''$ depending on the same quantities as the constant in \eqref{eq:d-r-ell}, $a^+$ and $b^+$. To apply \eqref{eq:d-ell-cor-mod} in estimating $\mathcal{I}_{4,p}$, $\mathcal{I}_{4,q}$ we claim

\[
\begin{split}
& \frac{d}{d-2}(2(\overline{s}-1)+r)<2(\underline{s}-1)+r+\frac{4}{N+2}.
\end{split}
\]
We rewrite this inequality in the equivalent form

\[
\frac{1}{d-2}\left(\overline{s}-1+\frac{r}{2}\right) =\overline{s}-1+\frac{r}{2}+\dfrac{2}{d-2}\left(\overline{s}+\frac{r}{2}-1\right)
< \underline{s}-1+\frac{r}{2}+\frac{2}{N+2}
\]
By virtue of \eqref{eq:balance-p-q}, the last inequality is true for all $z\in Q_T$ if

\begin{equation}
\label{eq:d-2}
\begin{split}
\dfrac{2}{d-2}\left(\overline{s}^++\frac{r}{2}-1\right)<\dfrac{2(1-\beta)}{N+2}
\quad \Leftrightarrow \quad d>2+\frac{N+2}{2(1-\beta)}\left(2(\overline{s}^+-1)+r\right).
\end{split}
\end{equation}
Then

\[
\begin{split}
\mathcal{I}_{4,p} & \leq \int_{\Omega}f^2w_{\epsilon}^{\frac{r}{2}}\,dx + \int_{\Omega}w_{\epsilon}^{\frac{2(p-1)+r}{2}} |\nabla a|^2 \,dx
\\
& \leq \int_{\Omega}f^2w_{\epsilon}^{\frac{r}{2}}\,dx + \int_{\Omega}|\nabla a|^d\,dx +\int_{\Omega}w_{\epsilon}^{\frac{d}{d-2}\frac{2(p-1)+r}{2}}\,dx.
\end{split}
\]
The same inequality holds for $\mathcal{I}_{4,q}$. Gathering the obtained estimates we conclude that for every $\delta>0$ there is a constant $C$ such that

\begin{equation}
\label{eq:right-hand-side-1}
\int_{\Omega} f \operatorname{div}\left(w_\epsilon^{\frac{r}{2}}\mathcal{F}_\epsilon(z,\nabla u)\nabla u\right)\,dx\leq \delta \int_{\Omega}w_\epsilon^{\frac{r}{2}} \mathcal{F}_{\epsilon}^2(z,\nabla u)|u_{xx}|^2\,dx+  \int_{\Omega}f^2w_\epsilon^{\frac{r}{2}}\,dx + C.
\end{equation}
The constant $C$ depends on $a^+$, $b^+$, $p^\pm$, $q^\pm$, $N$, $\partial\Omega$, $\|u(t)\|_{2,\Omega}$, $\|\nabla a\|_{d,\Omega}$, $\|\nabla b\|_{d,\Omega}$ but is independent of $\epsilon$. Noting that all assumptions on $d$ are fulfilled by virtue of \eqref{eq:reg-data}, we may summarize the estimates of this section in the following assertion.

\begin{lemma}
\label{le:main-est-prelim}
Let $u$ be a classical solution of problem \eqref{eq:main-reg}. Assume that $p$, $q$ satisfy condition \eqref{eq:balance-p-q}, $a_t,b_t,|\nabla a|,|\nabla b|\in L^d(\Omega)$ for every $t\in (0,T)$, and $d$ satisfies \eqref{eq:reg-data}. Then for any $r\geq 0$ and $\delta>0$ there is a constant $C$, depending only on the $\textbf{data}$, $\delta$, $r$, and $\|u(t)\|_{2,\Omega}$, such that

\begin{equation}
\label{eq:main-prelim-1}
\begin{split}
\dfrac{d}{dt} & \left(\int_\Omega \left(\frac{a_{\epsilon}}{p+r} w_\epsilon^{\frac{p+r}{2}}+\frac{b_{\epsilon}}{q+r} w_\epsilon^{\frac{q+r}{2}}\right)\,dx\right) + \int_{\Omega} \operatorname{div}\left(\mathcal{F}_{\epsilon}(x,\nabla u)\nabla u\right)
\operatorname{div}\left(w_\epsilon^{\frac{r}{2}}\mathcal{F}_{\epsilon}(x,\nabla u)\nabla u\right)\,dx
\\
&
\leq C_\delta \int_{\Omega}f^2w_\epsilon^{\frac{r}{2}}\,dx+\delta \int_{\Omega}w_\epsilon^{\frac{r}{2}}\mathcal{F}_\epsilon^2(z,\nabla u)|u_{xx}|^2\,dx +C.
\end{split}
\end{equation}
\end{lemma}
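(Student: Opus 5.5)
The lemma is a summary of the computations in parts (a) and (b) above, so the plan is to assemble them starting from identity \eqref{eq:energy-1}.

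First, multiply equation \eqref{eq:main-reg} by $\operatorname{div}\left(w_\epsilon^{\frac r2}\mathcal F_\epsilon(z,\nabla u)\nabla u\right)$ and integrate over $\Omega$ at a fixed $t$. This is legitimate because, by Lemma \ref{le:class-sol-existence}, the classical solution has $D_iu\in C^{2+\delta,1+\frac{\delta}{2}}_{loc}(Q_T)$ and $u\in C^{2+\delta,1+\frac{\delta}{2}}(\overline Q_T)$. Changing the sign of the whole identity gives
\[
-\int_\Omega u_t\operatorname{div}\left(w_\epsilon^{\frac r2}\mathcal F_\epsilon\nabla u\right)dx+\int_\Omega \operatorname{div}\left(\mathcal F_\epsilon\nabla u\right)\operatorname{div}\left(w_\epsilon^{\frac r2}\mathcal F_\epsilon\nabla u\right)dx=-\int_\Omega f\operatorname{div}\left(w_\epsilon^{\frac r2}\mathcal F_\epsilon\nabla u\right)dx .
\]

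Next, treat the time term. Integrate by parts in space; the boundary term vanishes because $u_t=0$ on $\partial\Omega$. This yields the representation in part (a) as a total time derivative of $\int_\Omega\left(\frac{a_\epsilon}{p+r}w_\epsilon^{\frac{p+r}2}+\frac{b_\epsilon}{q+r}w_\epsilon^{\frac{q+r}2}\right)dx$ plus the remainders $\mathcal J_1,\dots,\mathcal J_6$.
\begin{itemize}
\item The terms with $p_t,q_t$ and logarithms are bounded using \eqref{eq:elem-ln} together with Corollary \ref{cor:inter-extra}.
\item The terms with $a_t,b_t$ are bounded by Young's inequality with exponents $d$ and $d/(d-1)$, followed by Corollary \ref{cor:inter-extra}. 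The exponent condition this requires is implied by \eqref{eq:reg-data}.
\end{itemize}
The result is inequality \eqref{eq:time-2}, which is the lower bound needed for the left-hand side.

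Then treat the right-hand side. The sign of the $f$-term is irrelevant because every estimate is in absolute value; expand the divergence into $\mathcal I_1,\dots,\mathcal I_5$ as in part (b).
\begin{itemize}
\item The terms containing $u_{xx}$ are absorbed by Young's inequality into $\delta\int w_\epsilon^{\frac r2}\mathcal F_\epsilon^2|u_{xx}|^2$ plus $C_\delta\int f^2w_\epsilon^{\frac r2}$.
\item The logarithmic terms are bounded via \eqref{eq:elem-ln} and Corollary \ref{cor:d-1-cor-mod}.
\item The terms with $\nabla a,\nabla b$ are bounded by Young's inequality with exponents $d/2$ and $d/(d-2)$, again followed by Corollary \ref{cor:d-1-cor-mod}.
\end{itemize}
The resulting exponent restriction is \eqref{eq:d-2}, which \eqref{eq:reg-data} guarantees. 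This produces \eqref{eq:right-hand-side-1}.

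Finally, combine \eqref{eq:time-2} with \eqref{eq:right-hand-side-1}. The $\delta$-terms add up to a multiple of $\delta$, so rename $\delta$. Every constant depends only on the \textbf{data}, on $\delta$ and $r$, and on $\|u(t)\|_{2,\Omega}$; the last dependence enters through the interpolation inequalities of Lemmas \ref{le:d-r} and \ref{le:d-r-mod}. This gives \eqref{eq:main-prelim-1}.

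The main obstacle is bookkeeping rather than any new idea. One must check that every power of $w_\epsilon$ produced by the $\ln w_\epsilon$, $a_t$ and $\nabla a$ terms stays strictly below the critical exponent $2(\underline s-1)+r+\frac4{N+2}$ that Corollary \ref{cor:d-1-cor-mod} can absorb. This is precisely where the gap condition \eqref{eq:balance-p-q}, the bound $\underline s^->\frac{2(N+1)}{N+2}$, and the lower bound on $d$ are used.
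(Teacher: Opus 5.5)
Your proposal is correct and follows the paper's own argument. The lemma is exactly the combination of the lower bound \eqref{eq:time-2} for $-\int_\Omega u_t\operatorname{div}(w_\epsilon^{r/2}\mathcal{F}_\epsilon\nabla u)\,dx$ with the bound \eqref{eq:right-hand-side-1} for the $f$-term; both are obtained from \eqref{eq:elem-ln}, Young's inequality with the exponent pairs $d,\ d/(d-1)$ and $d/2,\ d/(d-2)$, and Corollaries \ref{cor:d-1-cor-mod} and \ref{cor:inter-extra}, with the sign flip in \eqref{eq:energy-1} harmless because, as you note, the $f$-terms are estimated in absolute value.
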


\subsection{The second-order derivatives}
\subsubsection{Pointwise inequalities}
Given a smooth function $u$, we denote $\vec{\eta}=\dfrac{\nabla u}{  \sqrt{w_\epsilon} }$, $|\vec \eta|<1$. By the straightforward computation
\begin{equation}
\label{eq:calc-1}
\begin{split}
&  D_j\alpha_i D_i\beta_j
\\
&
\quad
= w_\epsilon^{\frac{r}{2}} \left[\left(a_{\epsilon}w_\epsilon^{\frac{p-2}{2}}+b_\epsilon w_\epsilon^{\frac{q-2}{2}}\right)D^2_{ij}u + \left((p-2)a_{\epsilon}w_{\epsilon}^{\frac{p-2}{2}}+(q-2) b_\epsilon w_\epsilon^{\frac{q-2}{2}}\right) \eta_i \sum_{k=1}^N \eta_k D^2_{kj}u\right.
\\
& \quad
+ \left.\frac{1}{2} \left(a_{\epsilon}w_\epsilon^{\frac{p-2}{2}}D_iuD_jp+b_\epsilon w_\epsilon ^{\frac{q-2}{2}} D_iuD_jq\right)\ln w_\epsilon + w_\epsilon^{\frac{p-2}{2}}D_iuD_ja + w_\epsilon^{\frac{q-2}{2}} D_i u D_j b\right]
\\
& \times \left[\left(a_{\epsilon}w_\epsilon^{\frac{p-2}{2}}+b_\epsilon w_\epsilon^{\frac{q-2}{2}}\right)D^2_{ij}u + \left((p-2)a_{\epsilon}w_{\epsilon}^{\frac{p-2}{2}}+(q-2) b_\epsilon w_\epsilon^{\frac{q-2}{2}}\right) \eta_j \sum_{k=1}^N \eta_k D^2_{ki}u\right.
\\
& \quad
+ \left.\frac{1}{2} \left(a_{\epsilon}w_\epsilon^{\frac{p-2}{2}}D_juD_ip+b_\epsilon w_\epsilon ^{\frac{q-2}{2}} D_juD_iq\right)\ln w_\epsilon + w_\epsilon^{\frac{p-2}{2}}D_juD_ia + w_\epsilon^{\frac{q-2}{2}} D_j u D_i b
\right]
\\
& \quad + r w_{\epsilon}^{\frac{r}{2}} \left(a_{\epsilon}w_\epsilon^{\frac{p-2}{2}}+b_\epsilon w_\epsilon^{\frac{q-2}{2}}\right)\eta_i\sum_{k=1}^N \eta_k D^2_{kj}u
\\
& \times
\left[\left(a_{\epsilon}w_\epsilon^{\frac{p-2}{2}}+b_\epsilon w_\epsilon^{\frac{q-2}{2}}\right)D^2_{ij}u + \left((p-2)a_{\epsilon}w_{\epsilon}^{\frac{p-2}{2}}+(q-2) b_\epsilon w_\epsilon^{\frac{q-2}{2}}\right) \eta_j \sum_{k=1}^N \eta_k D^2_{ki}u\right.
\\
& \quad
\left.+ \frac{1}{2} \left(a_{\epsilon}w_\epsilon^{\frac{p-2}{2}}D_juD_ip+b_\epsilon w_\epsilon ^{\frac{q-2}{2}} D_juD_iq\right)\ln w_\epsilon+ w_\epsilon^{\frac{p-2}{2}}D_juD_ia  + w_\epsilon^{\frac{q-2}{2}} D_j u D_i b\right].
\end{split}
\end{equation}
Fix a point $x_0\in \Omega$ and denote by $\mathcal{H}\equiv \mathcal{H}(u(x_0))$ the Hessian matrix of $u$ at $x_0$. The following formulas hold:
\[
\begin{split}
& \sum_{i,j=1}^N\left(D^2_{ij}u\right)^2=\operatorname{trace}\mathcal{H}^2,
\\
& \sum_{i,j=1}^N\eta_jD^2_{ij}u\sum_{k=1}^N\eta_kD^2_{ki}u =\sum_{i=1}^N\left(\sum_{j=1}^N\mathcal{H}_{ij}\eta_j \right)\left(\sum_{k=1}^{N}\mathcal{H}_{ki}\eta_k \right)=\left|\mathcal{H}\cdot \eta\right|^2,
\\
& \sum_{i,j=1}^N\eta_i\eta_j\left(\sum_{k=1}^N \eta_kD^2_{ki}u\right)\left(\sum_{k=1}^N \eta_kD_{kj}^2u\right) =
\left(\sum_{i=1}^N\eta_i \cdot (\mathcal{H}\cdot \eta)_i\right)\left(\sum_{j=1}^{N}\eta_j \cdot (\mathcal{H}\cdot \eta)_j\right)=(\mathcal{H}\cdot \eta,\eta)^2.
\end{split}
\]
Using these formulas we rewrite \eqref{eq:calc-1} in the equivalent form
\[
\begin{split}
w_{\epsilon}^{-\frac{r}{2}}\sum_{ij} & D_i\alpha_j D_j\beta_i = \left(a_{\epsilon}w_\epsilon^{\frac{p-2}{2}}+b_\epsilon w_\epsilon^{\frac{q-2}{2}}\right)^2 \operatorname{trace}\mathcal{H}^2
\\
&
+ 2\left(a_{\epsilon}w_\epsilon^{\frac{p-2}{2}}+b_\epsilon w_\epsilon^{\frac{q-2}{2}}\right)\left(a_{\epsilon}(p-2)w_\epsilon^{\frac{p-2}{2}}+b_\epsilon (q-2) w_\epsilon^{\frac{q-2}{2}}\right)|\mathcal{H}\cdot \eta|^2
\\
& + \left(a_{\epsilon}(p-2)w_\epsilon^{\frac{p-2}{2}}+b_\epsilon (q-2) w_\epsilon^{\frac{q-2}{2}}\right)^2 (\mathcal{H}\cdot \eta,\eta)^2
\\
& + r \left(a_{\epsilon}w_\epsilon^{\frac{p-2}{2}}+b_\epsilon w_\epsilon^{\frac{q-2}{2}}\right)^2 |\mathcal{H}\cdot \eta|^2
\\
& +r  \left(a_{\epsilon}w_\epsilon^{\frac{p-2}{2}}+b_\epsilon w_\epsilon^{\frac{q-2}{2}}\right) \left(a_{\epsilon}(p-2)w_\epsilon^{\frac{p-2}{2}}+b_\epsilon (q-2) w_\epsilon^{\frac{q-2}{2}}\right) (\mathcal{H}\cdot\eta,\eta)^2
+ \mathcal{J}
\\
& \equiv \sum_{k=1}^5\mathbf{I}_k + \mathcal{J} + \mathcal{L},
\end{split}
\]
where $\mathbf{I}_k$ are second-order polynomials of $D_{ij}^2u$, while $\mathcal{J}=\sum_{ij}\mathcal{J}_{ij}$ are first-order and $\mathcal{L}=\sum_{ij}\mathcal{L}_{ij}$ zero-order polynomials of $D_{ij}^2u$. The polynomials $\mathcal{J}_{ij}$ and $\mathcal{L}_{ij}$ have the form
\begin{equation}
\label{eq:res-1}
\begin{split}
\mathcal{J}_{ij} & = \left[\left(a_{\epsilon}w_\epsilon^{\frac{p-2}{2}}+b_\epsilon w_\epsilon^{\frac{q-2}{2}}\right)D^2_{ij}u + \left((p-2)a_{\epsilon}w_{\epsilon}^{\frac{p-2}{2}}+(q-2) b_\epsilon w_\epsilon^{\frac{q-2}{2}}\right) \eta_i \sum_{k=1}^N \eta_k D^2_{kj}u\right]
\\
& \qquad \times \left[\frac{1}{2} \left(a_{\epsilon}w_\epsilon^{\frac{p-2}{2}}D_juD_ip+b_\epsilon w_\epsilon ^{\frac{q-2}{2}} D_juD_iq\right)\ln w_\epsilon +w_\epsilon^{\frac{p-2}{2}}D_iuD_ja + w_\epsilon^{\frac{q-2}{2}} D_j u D_i b
\right]
\\
& + \frac{1}{2} \left[\left(a_{\epsilon}w_\epsilon^{\frac{p-2}{2}}D_iuD_jp+b_\epsilon w_\epsilon ^{\frac{q-2}{2}} D_iuD_jq\right)\ln w_\epsilon + w_{\epsilon}^{\frac{p-2}{2}}D_iuD_ja + w_\epsilon^{\frac{q-2}{2}} D_i u D_j b\right]
\\
& \qquad \times \left[\left(a_{\epsilon}w_\epsilon^{\frac{p-2}{2}}+b_\epsilon w_\epsilon^{\frac{q-2}{2}}\right)D^2_{ij}u + \left((p-2)a_{\epsilon}w_{\epsilon}^{\frac{p-2}{2}}+(q-2) b_\epsilon w_\epsilon^{\frac{q-2}{2}}\right) \eta_j \sum_{k=1}^N \eta_k D^2_{ki}u\right]
\\
& + \frac{r}{2} \left[\left(a_{\epsilon}w_\epsilon^{\frac{p-2}{2}}D_iuD_jp+b_\epsilon w_\epsilon ^{\frac{q-2}{2}} D_iuD_jq\right)\ln w_\epsilon + w_\epsilon^{\frac{p-2}{2}}D_iuD_ja  + w_\epsilon^{\frac{q-2}{2}} D_i u D_j b\right]
\\
& \qquad \times \left(a_{\epsilon}w_\epsilon^{\frac{p-2}{2}}+b_\epsilon w_\epsilon^{\frac{q-2}{2}}\right)\eta_i\sum_{k=1}^N \eta_k D^2_{kj}u
\end{split}
\end{equation}
and
\begin{equation}
\label{eq:res-2}
\begin{split}
\mathcal{L}_{ij} & =\frac{1}{4} \left[\left(a_{\epsilon}w_\epsilon^{\frac{p-2}{2}}D_iuD_jp+b_\epsilon w_\epsilon ^{\frac{q-2}{2}} D_iuD_jq\right)\ln w_\epsilon + w_\epsilon^{\frac{p-2}{2}}D_iuD_ja + w_\epsilon^{\frac{q-2}{2}} D_i u D_j b\right]\\
& \qquad \times \left[\left(a_{\epsilon}w_\epsilon^{\frac{p-2}{2}}D_juD_ip+b_\epsilon w_\epsilon ^{\frac{q-2}{2}} D_juD_iq\right)\ln w_\epsilon + w_\epsilon^{\frac{p-2}{2}}D_juD_ia + w_\epsilon^{\frac{q-2}{2}} D_j u D_i b\right]
\end{split}
\end{equation}
The polynomials $\mathcal{J}_{ij}$ and $\mathcal{K}_{ij}$ are estimated as follows: for $i,j=\overline{1,N}$
\begin{equation}
\label{eq:J-pw}
\begin{split}
|\mathcal{J}_{ij}| & \leq C\left(\mathcal{F}_\epsilon(z,\nabla u)|u_{xx}|\right)\left(w_{\epsilon}^{\frac{p-1}{2}} + w_{\epsilon}^{\frac{q-1}{2}}\right)\left(|\ln w_{\epsilon}| +|\nabla a|+|\nabla b|\right)
\end{split}
\end{equation}
and
\begin{equation}
\label{eq:J-pw-1}
\begin{split}
|\mathcal{L}_{ij}| & \leq C \left[\left(a_{\epsilon}^2 w_\epsilon^{p-2} |\nabla u|^2 + b_\epsilon^2 w_\epsilon ^{q-2} |\nabla u|^2 \right) |\ln w_\epsilon|^2 + \left(w_\epsilon^{p-2} |\nabla u|^2 |\nabla a|^2+ w_\epsilon^{q-2} |\nabla u|^2 |\nabla b|^2\right)\right]
\end{split}
\end{equation}
with a constant $C=C(a^+,b^+,p^+,q^+,r)$. Combining the terms $\mathbf{I}_k$ we obtain

\[
\mathbf{I}_1+\mathbf{I}_4= \left(a^2_{\epsilon}w_\epsilon^{p-2}+2a_\epsilon b_\epsilon w_\epsilon^{\frac{p+q-4}{2}} +b^2_\epsilon w_\epsilon^{q-2}\right)\left(\operatorname{trace}\mathcal{H}^2 + r|\mathcal{H}\cdot \eta|^2\right),
\]

\[
\mathbf{I}_2+\mathbf{I}_5=\left[2(p-2)a_{\epsilon}^2w_{\epsilon}^{p-2}  + 2a_\epsilon b_\epsilon (p+q-4) w_\epsilon^{\frac{p+q-4}{2}} +2b_\epsilon^2(q-2)w_\epsilon^{q-2}\right] \left(|\mathcal{H}\cdot \eta|^2 +r (\mathcal{H}\cdot \eta,\eta)^2\right),
\]

\[
\mathbf{I}_3= \left((p-2)^2a^2_{\epsilon}w_\epsilon^{p-2}+2(p-2)(q-2)a_\epsilon b_\epsilon w_\epsilon^{\frac{p+q-4}{2}} +(q-2)^2b^2_\epsilon w_\epsilon^{q-2}\right)(\mathcal{H}\cdot \eta,\eta)^2,
\]
Regrouping these expressions we obtain

\[
\begin{split}
\sum_{i=1}^{5}\mathbf{I}_i =
& a_{\epsilon}^2 w_{\epsilon}^{p-2}\left(\operatorname{trace}\mathcal{H}^2 + r|\mathcal{H}\cdot \eta|^2 + 2(p-2)\left[|\mathcal{H}\cdot \eta|^2 +r(\mathcal{H}\cdot \eta,\eta)^2\right]+(p-2)^2(\mathcal{H}\cdot \eta,\eta)^2 \right)
\\
+ & b_{\epsilon}^2 w_{\epsilon}^{q-2}\left(\operatorname{trace}\mathcal{H}^2 + r|\mathcal{H}\cdot \eta|^2 + 2(q-2)\left[|\mathcal{H}\cdot \eta|^2 +r(\mathcal{H}\cdot \eta,\eta)^2\right]+(q-2)^2(\mathcal{H}\cdot \eta,\eta)^2 \right)
\\
+ & 2a_\epsilon b_\epsilon w_{\epsilon}^{\frac{p+q-4}{2}}\left(\operatorname{trace}\mathcal{H}^2 + r|\mathcal{H}\cdot \eta|^2 + (p+q-4)\left(|\mathcal{H}\cdot \eta|^2 +r(\mathcal{H}\cdot \eta,\eta)^2 \right) \right.
\\
&
\left.+(p-2)(q-2)(\mathcal{H}\cdot \eta,\eta)^2 \right).
\end{split}
\]
Pass to the coordinate system with the origin $x_0$ where the matrix $\mathcal{H}$ becomes diagonal. Denote by $d_i$, $i=\overline{1,N}$,  the diagonal elements of the transformed matri,x and by $\zeta$ the vector $\eta$ in the new coordinate system. Applying the formulas

\[
\begin{split}
& \operatorname{trace}\mathcal{H}^2=\sum_{i}d_i^2,\qquad |\mathcal{H}\cdot \eta|^2 = (\mathcal{H}\cdot \eta,\mathcal{H}\cdot \eta)= \sum_id_i^2\zeta_i^2,\qquad
(\mathcal{H}\cdot\eta,\eta)^2 = \left(\sum_id_i\zeta_i^2\right)^2
\end{split}
\]
we represent

\[
\sum_{i=1}^{5}\mathbf{I}_i = a_{\epsilon}^2 w_{\epsilon}^{p-2}\mathcal{G}_1(\zeta) + b_{\epsilon}^2 w_{\epsilon}^{q-2}\mathcal{G}_2(\zeta) + 2a_\epsilon b_\epsilon w_{\epsilon}^{\frac{p+q-4}{2}}\mathcal{G}_3(\zeta),
\]
where

\[
\begin{split}
\mathcal{G}_1(\zeta) & = \sum_id_i^2 + (2p+r-4)\sum_id^2_i\zeta_i^2 + (p-2)(2r+p-2)\left(\sum_id_i\zeta_i^2\right)^2,
\\
\mathcal{G}_2(\zeta) & = \sum_id_i^2 + (2q+r-4)\sum_id^2_i\zeta_i^2 + (q-2)(2r+q-2)\left(\sum_id_i\zeta_i^2\right)^2,
\\
\mathcal{G}_3(\zeta) & = \sum_id_i^2 +(p+q+r-4)\sum_id_i^2\zeta_i^2 + [r(p+q-4)+(p-2)(q-2)]\left(\sum_id_i\zeta_i^2\right)^2.
\end{split}
\]
Set $\lambda=|\zeta|<1$, define the vector $\xi=\lambda^{-1}\zeta$, $|\xi|=1$, and rewrite

\[
\begin{split}
\mathcal{G}_1(\zeta) & = \sum_id_i^2 + \lambda^2(2p+r-4)\sum_id^2_i\xi_i^2 + \lambda^4(p-2)(2r+p-2)\left(\sum_id_i\xi_i^2\right)^2,
\\
\mathcal{G}_2(\zeta) & = \sum_id_i^2 + \lambda^2(2q+r-4)\sum_id^2_i\xi_i^2 + \lambda^4(q-2)(2r+q-2)\left(\sum_id_i\xi_i^2\right)^2,
\\
\mathcal{G}_3(\zeta) & = \sum_id_i^2 +\lambda^2(p+q+r-4)\sum_id_i^2\xi_i^2 + \lambda^4[r(p+q-4)+(p-2)(q-2)]\left(\sum_id_i\xi_i^2\right)^2.
\end{split}
\]
Notice that

\begin{equation}
\label{eq:aux-elem-1}
\begin{split}
& \lambda^4\left(\sum_id_i\xi_i^2\right)^2=\lambda^4\left(\sum_i(d_i\xi_i)\xi_i\right)^2\leq \lambda^4\left(\sum_id_i^2\xi_i^2\right)^2|\xi|^2=\lambda^4\sum_id_i^2\xi_i^2\leq \lambda^2\sum_id_i^2\xi_i^2,
\\
& \sum_id_i^2\xi_i^2\leq \sum_id_i^2.
\end{split}
\end{equation}

\begin{lemma}
\label{le:pointwise}
If $p>\dfrac{3}{2}$, $q>\dfrac{3}{2}$, and $r\geq 0$, then there is a finite constant $\theta\in (0,1)$ such that

\[
\mathcal{G}_i(\zeta)\geq (1-\theta)\operatorname{trace}\mathcal{H}^2.
\]
\end{lemma}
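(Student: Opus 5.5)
Set $A=\sum_i d_i^2=\operatorname{trace}\mathcal{H}^2$, $B=\lambda^2\sum_i d_i^2\xi_i^2$ and $C=\lambda^4\big(\sum_i d_i\xi_i^2\big)^2$. By \eqref{eq:aux-elem-1} we have $0\le C\le B\le A$. Each $\mathcal{G}_i$ has the form $A+\mu B+\kappa C$, with
\[
(\mu,\kappa)=(2p+r-4,\ (p-2)(2r+p-2))
\]
for $\mathcal{G}_1$, the same with $q$ in place of $p$ for $\mathcal{G}_2$, and
\[
(\mu,\kappa)=(p+q+r-4,\ r(p+q-4)+(p-2)(q-2))
\]
for $\mathcal{G}_3$. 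The function $(B,C)\mapsto A+\mu B+\kappa C$ is affine. So it suffices to bound it from below on the triangle $\{0\le C\le B\le A\}$, and its minimum there is attained at one of the three vertices $(0,0)$, $(A,0)$, $(A,A)$. This gives
\[
\mathcal{G}_i\ge A\cdot\min\{1,\ 1+\mu,\ 1+\mu+\kappa\}.
\]
The plan is to check that, for each $i$, all three numbers are bounded below by a positive constant depending only on $p^-,q^-$. That constant is then $1-\theta$.

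For $\mathcal{G}_1$ we have $1+\mu=2p+r-3\ge 2p-3>0$ because $p>3/2$. For the third vertex,
\[
1+\mu+\kappa=2p+r-3+(p-2)^2+2r(p-2)=(p-1)^2+r(2p-3)\ge (p-1)^2>0.
\]
The same computation works for $\mathcal{G}_2$ with $q$ in place of $p$. One might worry about the case $p<2$, where $\kappa$ can be negative. But the vertex $(A,A)$ is exactly where the negative part would enter, and there the perfect square $(p-1)^2$ absorbs it, so the reduction to vertices covers this case.

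For the mixed term $\mathcal{G}_3$ we have $1+\mu=p+q+r-3\ge p+q-3>0$. At the third vertex, using $r(p+q-4)+r=r(p+q-3)$,
\[
1+\mu+\kappa=p+q-3+(p-2)(q-2)+r(p+q-3)=(p-1)(q-1)+r(p+q-3).
\]
This is at least $(p-1)(q-1)>0$. Taking
\[
1-\theta:=\min\Big\{1,\ 2\underline{s}^- -3,\ (\underline{s}^- -1)^2\Big\}\in(0,1]
\]
works for all three functionals uniformly in $r\ge0$. If $1-\theta$ equals $1$, we can shrink it slightly so that $\theta\in(0,1)$.

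The main obstacle is the mixed functional $\mathcal{G}_3$, because there $\kappa$ has no definite sign, and one has to see that the combination at the vertex $C=B=A$ factors as $(p-1)(q-1)+r(p+q-3)$. Before relying on this factorisation, I would double-check that the vertex reduction is legitimate. It is, because only the constraints $0\le C\le B\le A$ from \eqref{eq:aux-elem-1} are used, and the set they define is exactly that triangle in the $(B,C)$-plane.
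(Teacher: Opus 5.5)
Your proof is correct. It rests on the same two facts as the paper's proof, namely $0\le\lambda^4(\sum_i d_i\xi_i^2)^2\le\lambda^2\sum_i d_i^2\xi_i^2\le\sum_i d_i^2$ from \eqref{eq:aux-elem-1}, but you organize them differently.

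The paper splits into cases: $p,q\ge2$; one exponent in $(3/2,2)$; both in $(3/2,2)$. Inside each case it splits again by the sign of the coefficient of $\lambda^4(\sum_i d_i\xi_i^2)^2$. It drops nonnegative terms and absorbs negative ones by taking $\theta$ ``close to one'' ($\theta>3/4$, $\theta>1/2$, and so on).

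Your observation that $A+\mu B+\kappa C$ is affine on the triangle $0\le C\le B\le A$ replaces all of this with a check of three vertex values. The coefficients the paper estimates case by case are exactly your third vertex value shifted by $\theta-1$. For example, its coefficient $\theta+(q-1)^2-1+2r(q-\tfrac32)$ for $\mathcal{G}_2$ equals $\theta-1+(1+\mu+\kappa)$. Your factorizations $(p-1)^2+r(2p-3)$ and $(p-1)(q-1)+r(p+q-3)$ make the positivity of these quantities immediate.

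Your route gives an explicit constant, $1-\theta=\min\{1,2\underline{s}^--3\}$. The entry $(\underline{s}^--1)^2$ in your minimum is redundant, since $(s-1)^2-(2s-3)=(s-2)^2$. This constant is independent of $r$, whereas Lemma \ref{le:pointwise-1} only records $\theta=\theta(p^-,q^-,r)$. Your argument also covers $r=0$ directly, while the paper's cases are all stated for $r>0$. The paper's case split adds nothing beyond showing which terms are already nonnegative when $p,q\ge 2$.
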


\begin{proof}
We consider all possible combinations of the parameters $p$, $q$.

\begin{enumerate}
\item $p\geq 2$, $q\geq 2$, $r>0$. Then

\[
\mathcal{G}_i(\zeta)\geq \sum_{i}d_i^2=\operatorname{trace}\mathcal{H}^2,\quad i=1,2,3.
\]

\item $p\geq 2$, $\dfrac{3}{2}<q<2$, $r>0$. For every $\theta\in [0,1)$

\[
\begin{split}
\mathcal{G}_1(\zeta) & \geq \sum_id_i^2,
\\
\mathcal{G}_2(\zeta) & \geq (1-\theta)\sum_{i}d_i^2+ \lambda^2\left(\theta +(2q+r-4)\right)\sum_{i}d_i^2 + \lambda^4(q-2)(2r+q-2)\sum_id_i^2\xi_i^2.
\end{split}
\]
If $2r+q-2\leq 0$, the last term is nonnegative and can be omitted. Since $2q-4>-1$ by assumption, there is $\theta\in (0,1)$ such that

\[
\mathcal{G}_2(\zeta)\geq (1-\theta)\sum_{i}d_i^2.
\]
If $2r+q-2> 0$, the last term of $\mathcal{G}_2(\zeta)$ is negative. Since $\lambda<1$, we substitute $\lambda^4$ by $\lambda^2$ to obtain

\[
\mathcal{G}_2(\zeta) \geq (1-\theta)\sum_{i}d_i^2+ \lambda^2\left(\theta +(2q+r-4)+(q-2)(2r+q-2)\right)\sum_id_i^2\xi_i^2.
\]
For $q>\dfrac{3}{2}$ the last term on the right-hand side is bounded from below:

\[
\begin{split}
(2q+r-4) & +(q-2)(2r+q-2)=2(q-2)+r+2r(q-2)+(q-2)^2
\\
& = (q-1)^2-1+2r\left(q-\frac{3}{2}\right)>\dfrac{1}{4}-1=-\dfrac{3}{4},
\end{split}
\]
whence $\mathcal{G}_2(\zeta)\geq (1-\theta) \sum_id_i^2$ with $\theta>\dfrac{3}{4}$. To estimate $\mathcal{G}_3(\zeta)$ we notice that for $\theta\in (1/2,1)$

\[
\theta+p+q+r-4\geq \theta + q-2> \theta +\frac{3}{2}-2=\theta -\frac{1}{2}>0,
\]
therefore

\[
\begin{split}
\mathcal{G}_3(\zeta)\geq (1-\theta)\sum_id_i^2 & +\lambda^2(\theta+p+q+r-4)\sum_{i}d_i^2\xi_i^2
\\
&
+ \lambda^4[r(p+q-4)+(p-2)(q-2)]\left(\sum_id_i\xi_i^2\right)^2.
\end{split}
\]
If $r(p+q-4)+(p-2)(q-2)\geq 0$, then the last term can be omitted, and the required inequality follows: for $\theta\in [1/2,1)$, $r>0$, $p\geq 2$, $q\in (3/2,2)$

\[
\begin{split}
\mathcal{G}_3(\zeta) & \geq (1-\theta)\sum_id_i^2+\lambda^2(\theta+p+q+r-4)\sum_{i}d_i^2\xi_i^2
\\
& > (1-\theta)\sum_id_i^2+\lambda^2(\theta+q-2)\sum_{i}d_i^2\xi_i^2 > (1-\theta)\sum_id_i^2.
\end{split}
\]
Let $r(p+q-4)+(p-2)(q-2)< 0$. By virtue of \eqref{eq:aux-elem-1}

\[
\begin{split}
\mathcal{G}_3(\zeta) & \geq (1-\theta)\sum_id_i^2+ \lambda^2(\theta+p+q+r-4+r(p+q-4)+(p-2)(q-2))\sum_{i} d_i^2\xi_i^2.
\end{split}
\]
For $p\geq 2$, $q\in (3/2,2)$, and $\theta \in (1/2,1)$ the coefficient of the last term is estimated from below,

\[
\begin{split}
\theta & +p+q+r-4+r(p+q-4)+(p-2)(q-2)
\\
&
= \theta + r + (p-2)(q-1)+q-2+r((q-2)+(p-2))
\\
& \geq \theta +(p-2)(q-1)+ r(q-1)+(q-2)\geq \theta +(q-2)>\theta-\dfrac{1}{2}>0,
\end{split}
\]
whence

\[
\mathcal{G}_3(\zeta) \geq (1-\theta)\sum_id_i^2.
\]
The same estimates hold if $q\geq 2$, $p\in (3/2,2)$, $r>0$.

\item $p,q\in (3/2,2)$, $r>0$. In this case, the estimates on $\mathcal{G}_1(\zeta)$, $\mathcal{G}_2(\zeta)$ follow as in item (2). To estimate $\mathcal{G}_3$ we use \eqref{eq:aux-elem-1}:

    \[
    \begin{split}
    \mathcal{G}_3(\zeta) & \geq (1-\theta)\sum_id_i^2 +\lambda^2(\theta+p+q+r-4 + r(p+q-4))\sum_id_i^2\xi_i^2
        \end{split}
    \]
Because $p+q-4>-1$, we may choose $\theta$ so close to one that

\[
\theta+p+q+r-4 + r(p+q-4)= (\theta+p+q-4)+r +r(p+q-4)> \theta+p+q-4\geq 0.
\]
It follows that

\[
\mathcal{G}_3\geq (1-\theta)\sum_{i}d_i^2\qquad \text{with $\theta\in(0,1)$}.
\]
\end{enumerate}
\end{proof}

\begin{lemma}
\label{le:pointwise-1}
Let $u\in C^2(\Omega)$ and $p^-,q^->\dfrac{3}{2}$. For every $r\geq 0$ there is a constant $\theta\equiv \theta(p^-,q^-,r)\in (0,1)$ such that

\begin{equation}
\label{eq:pointwise-1}
\begin{split}
\sum_{i,j=1}^N  D_i \left(\mathcal{F}_{\epsilon}(x,\nabla u) D_j u\right)  & D_j\left(w_\epsilon^{\frac{r}{2}}\mathcal{F}_{\epsilon}(x,\nabla u)D_iu \right)
\\
& \geq (1-\theta)w_\epsilon^{\frac{r}{2}}\mathcal{F}_{\epsilon}^2(x,\nabla u)|u_{xx}|^2 + w_\epsilon^{\frac{r}{2}}\sum_{i,j=1}^N\mathcal{J}_{ij}+ w_\epsilon^{\frac{r}{2}}\sum_{i,j=1}^N\mathcal{L}_{ij}
\end{split}
\end{equation}
with $\mathcal{J}_{ij}$ and $\mathcal{L}_{ij}$ are defined in \eqref{eq:res-1} and \eqref{eq:res-2}.
\end{lemma}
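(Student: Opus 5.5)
The plan is to assemble inequality \eqref{eq:pointwise-1} from the algebraic decomposition already derived above, combined with Lemma \ref{le:pointwise}. First, at a fixed point $x_0\in\Omega$, expand the product $\sum_{i,j}D_i\alpha_jD_j\beta_i$ with $\vec\alpha,\vec\beta$ as in \eqref{eq:p+r}. By \eqref{eq:calc-1} and the regrouping that follows it,
\[
w_\epsilon^{-\frac r2}\sum_{i,j}D_i\alpha_jD_j\beta_i=\sum_{k=1}^5\mathbf I_k+\sum_{i,j}\mathcal J_{ij}+\sum_{i,j}\mathcal L_{ij}.
\]
Here $\mathcal J_{ij}$ and $\mathcal L_{ij}$ are exactly the first- and zero-order terms in \eqref{eq:res-1} and \eqref{eq:res-2}. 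They therefore pass unchanged to the right-hand side of \eqref{eq:pointwise-1}, and the whole task reduces to a lower bound for the quadratic part $\sum_k\mathbf I_k$.

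Diagonalize the Hessian by an orthogonal change of coordinates. This leaves $\operatorname{trace}\mathcal H^2=|u_{xx}|^2$ and the quantities $|\mathcal H\eta|$ and $(\mathcal H\eta,\eta)$ invariant. In these coordinates,
\[
\sum_k\mathbf I_k=a_\epsilon^2w_\epsilon^{p-2}\mathcal G_1+b_\epsilon^2w_\epsilon^{q-2}\mathcal G_2+2a_\epsilon b_\epsilon w_\epsilon^{\frac{p+q-4}{2}}\mathcal G_3 .
\]
Because $p(x_0)\ge p^->3/2$ and $q(x_0)\ge q^->3/2$, Lemma \ref{le:pointwise} gives $\mathcal G_i\ge(1-\theta)\operatorname{trace}\mathcal H^2$ for $i=1,2,3$. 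All three weights $a_\epsilon^2w_\epsilon^{p-2}$, $b_\epsilon^2w_\epsilon^{q-2}$, $2a_\epsilon b_\epsilon w_\epsilon^{(p+q-4)/2}$ are nonnegative, so
\[
\sum_k\mathbf I_k\ge(1-\theta)\bigl(a_\epsilon w_\epsilon^{\frac{p-2}{2}}+b_\epsilon w_\epsilon^{\frac{q-2}{2}}\bigr)^2|u_{xx}|^2=(1-\theta)\mathcal F_\epsilon^2|u_{xx}|^2 .
\]
Multiplying by $w_\epsilon^{r/2}$ then yields \eqref{eq:pointwise-1}.

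The main obstacle is making $\theta$ independent of $x_0$. It must depend only on $p^-$, $q^-$ and $r$, not on the local values $p(x_0)$, $q(x_0)$. To handle this, I would revisit the case analysis in Lemma \ref{le:pointwise} and choose $\theta$ uniformly:
\begin{itemize}
\item in case (2), the coefficient is bounded below by $(q-1)^2-1>(q^--1)^2-1>-3/4$, so any $\theta>1-(q^--1)^2$ works;
\item in case (3), it suffices that $\theta>4-p^--q^-$, which is possible with $\theta<1$ since $p^-+q^->3$;
\item case (1) holds trivially.
\end{itemize}
Taking the maximum of these finitely many thresholds gives a single $\theta(p^-,q^-)\in(0,1)$.

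The borderline case $r=0$ is not separately listed in Lemma \ref{le:pointwise}. The same computations apply there with $r=0$, since the terms containing $r$ are either dropped or only help.
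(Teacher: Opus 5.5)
Your proposal is correct and is essentially the paper's argument: \eqref{eq:pointwise-1} follows from three ingredients. These are the splitting $w_\epsilon^{-r/2}\sum_{i,j}D_i\alpha_jD_j\beta_i=\sum_k\mathbf I_k+\sum_{i,j}\mathcal J_{ij}+\sum_{i,j}\mathcal L_{ij}$, the representation of $\sum_k\mathbf I_k$ through $\mathcal G_1,\mathcal G_2,\mathcal G_3$ with nonnegative weights that recombine into $\mathcal F_\epsilon^2$, and Lemma \ref{le:pointwise}. Your choice of a $\theta$ depending only on $p^-,q^-$ and your remark on $r=0$ make explicit what the paper leaves tacit; but if you follow the case analysis of Lemma \ref{le:pointwise} literally, the maximum must also include the thresholds $4-2p^-$ and $4-2q^-$ from the subcases where the nonnegative $\lambda^4$-term is discarded (these stay below $1$ precisely because $p^-,q^->\frac32$).
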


\subsubsection{Integral inequalities}
Combining \eqref{eq:pointwise-1} with \eqref{eq:double-final-e-start} we arrive at the following integral inequality.

\begin{lemma}
\label{le:principal-e} Let $\partial\Omega\in C^2$, $u\in C^3(\Omega)\cap
C^{2}(\overline{\Omega})$, $p, q \in C^{0,1}(\overline{\Omega})$, $p^-, q^-> \dfrac{3}{2}$ and $r\geq 0$. There is a constant $\theta\in (0,1)$ such that

\begin{equation}
\label{eq:p-est-1}
\begin{split}
\int_{\Omega} & \operatorname{div}\left(\mathcal{F}_{\epsilon}(x,\nabla u)\nabla u\right)
\operatorname{div}\left(w_\epsilon^{\frac{r}{2}}\mathcal{F}_{\epsilon}(x,\nabla u)\nabla u\right)\,dx \geq \theta \int_{\Omega} w_{\epsilon}^{\frac{r}{2}}\mathcal{F}_\epsilon^{2}(x,\nabla u) |u_{xx}|^2 \,dx
\\
&
- \int_{\partial\Omega} w_\epsilon^{\frac{r}{2}}\mathcal{F}^2_{\epsilon}(x,\nabla u)|\nabla u|^2\operatorname{trace}\mathcal{B}\,dS
 + \sum_{i,j=1}^N\int_{\Omega}w_\epsilon^{\frac{r}{2}}\mathcal{J}_{ij}\,dx  + w_\epsilon^{\frac{r}{2}}\sum_{i,j=1}^N\mathcal{L}_{ij}
\end{split}
\end{equation}
where $\mathcal{B}$ is the second fundamental form of the surface $\partial\Omega$, and $\mathcal{J}_{ij}$ and $\mathcal{L}_{ij}$ are defined in \eqref{eq:res-1} and \eqref{eq:res-2}.
\end{lemma}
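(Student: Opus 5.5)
The inequality in Lemma \ref{le:principal-e} follows by combining the Green formula of Lemma \ref{le:Green-1}, in the specialized form \eqref{eq:double-final-e-start}, with the pointwise lower bound of Lemma \ref{le:pointwise-1}. First I check that Lemma \ref{le:Green-1} applies to the vectors \eqref{eq:p+r}, namely $\vec\alpha=\mathcal{F}_\epsilon(x,\nabla u)\nabla u$ and $\vec\beta=w_\epsilon^{\frac r2}\vec\alpha$.

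\textbf{Regularity.} For $\epsilon>0$ the weight $w_\epsilon\ge\epsilon^2$ is bounded away from zero. Hence $w_\epsilon^{\gamma}$ is a smooth function of $\nabla u$ for every real $\gamma$, and $\ln w_\epsilon$ is bounded on $\overline\Omega$. With $u\in C^2(\overline\Omega)\cap C^3(\Omega)$ and $p,q,a,b$ regular enough, this gives $\alpha_i,\beta_i\in C^1(\overline\Omega)\cap C^2(\Omega)$. If only Lipschitz continuity is assumed, one mollifies $p,q,a,b$ and passes to the limit, since both sides of the final inequality involve only first derivatives of $p,q,a,b$. This yields \eqref{eq:vec-reg}.

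\textbf{Tangential components.} Because $u=0$ on $\partial\Omega$, $\nabla u$ is normal to $\partial\Omega$: $\nabla u=\pm|\nabla u|\vec\nu$. Both $\vec\alpha$ and $\vec\beta$ are scalar multiples of $\nabla u$, so $\vec\alpha_\tau=\vec\beta_\tau=0$. Where $\nabla u=0$ on $\partial\Omega$, both vectors vanish and the claim is trivial. This is exactly the computation leading to \eqref{eq:double-final-e-start}. The product of the normal components is $\alpha_\nu\beta_\nu=w_\epsilon^{\frac r2}\mathcal{F}_\epsilon^2|\nabla u|^2$, so the sign of $\nabla u\cdot\vec\nu$ drops out.

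\textbf{Combining.} Formula \eqref{eq:double-final-e-start} reads
\[
\int_\Omega \operatorname{div}\vec\alpha\operatorname{div}\vec\beta\,dx=-\int_{\partial\Omega}w_\epsilon^{\frac r2}\mathcal{F}_\epsilon^2|\nabla u|^2\operatorname{trace}\mathcal{B}\,dS+\int_\Omega\sum_{i,j}D_i\alpha_jD_j\beta_i\,dx .
\]
Into the last integrand I substitute the pointwise inequality \eqref{eq:pointwise-1} and integrate over $\Omega$. The quadratic part contributes $(1-\theta)\int_\Omega w_\epsilon^{\frac r2}\mathcal{F}_\epsilon^2|u_{xx}|^2\,dx$, and the first- and zero-order remainders give $\int_\Omega w_\epsilon^{\frac r2}\sum(\mathcal{J}_{ij}+\mathcal{L}_{ij})\,dx$. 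Renaming the constant $1-\theta\in(0,1)$ as $\theta$ gives \eqref{eq:p-est-1}. All integrals are finite because every integrand is continuous on $\overline\Omega$ for fixed $\epsilon$.

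\textbf{Main obstacle: Lemma \ref{le:pointwise-1}.} The substantive step is proving Lemma \ref{le:pointwise-1} itself, which was stated without proof. I would expand $\sum_{i,j}D_i\alpha_jD_j\beta_i$ as in \eqref{eq:calc-1} and collect the second-order part into $a_\epsilon^2w_\epsilon^{p-2}\mathcal{G}_1+b_\epsilon^2w_\epsilon^{q-2}\mathcal{G}_2+2a_\epsilon b_\epsilon w_\epsilon^{\frac{p+q-4}{2}}\mathcal{G}_3$, an expression that is invariant under rotations diagonalizing $\mathcal{H}$. Lemma \ref{le:pointwise} then bounds each $\mathcal{G}_k$ below by $(1-\theta)\operatorname{trace}\mathcal{H}^2$, using the hypothesis $p,q>3/2$ pointwise. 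Here two points need care. First, $\theta$ must be chosen uniformly, depending only on $p^-,q^-,r$. Second, the mixed coefficient $2a_\epsilon b_\epsilon\ge0$ lets the three lower bounds be summed. Together these give
\[
\Big(a_\epsilon^2w_\epsilon^{p-2}+2a_\epsilon b_\epsilon w_\epsilon^{\frac{p+q-4}{2}}+b_\epsilon^2w_\epsilon^{q-2}\Big)(1-\theta)|u_{xx}|^2=(1-\theta)\mathcal{F}_\epsilon^2|u_{xx}|^2 .
\]
The remaining terms are precisely $\mathcal{J}_{ij}$ and $\mathcal{L}_{ij}$ as defined in \eqref{eq:res-1} and \eqref{eq:res-2}.
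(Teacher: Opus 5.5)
Your proposal is correct and follows the paper's route exactly: the paper obtains \eqref{eq:p-est-1} by substituting the pointwise bound \eqref{eq:pointwise-1} into the Green identity \eqref{eq:double-final-e-start} and renaming $1-\theta$ as $\theta$. Your remarks on the regularity required by Lemma \ref{le:Green-1} (via mollification of $p,q,a,b$), and your sketch of how Lemma \ref{le:pointwise-1} follows from Lemma \ref{le:pointwise} through the identity $a_\epsilon^2 w_\epsilon^{p-2}+2a_\epsilon b_\epsilon w_\epsilon^{\frac{p+q-4}{2}}+b_\epsilon^2 w_\epsilon^{q-2}=\mathcal{F}_\epsilon^2$, fill in details that the paper leaves implicit.
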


Using Lemma \ref{le:trace-main} to estimate the boundary integral in \eqref{eq:p-est-1} we refine Lemma \ref{le:principal-e} as follows.
\begin{lemma}
\label{le.second-der-est-1}
Let the conditions of Lemmas 
\ref{le:principal-e} be fulfilled and \eqref{eq:d-boundary} holds true.  Then

\begin{equation}
  \label{eq:p-est-2}
  \begin{split}
\int_{\Omega} \operatorname{div}\left(\mathcal{F}_{\epsilon}(x,\nabla u)\nabla u\right)
& \operatorname{div}\left(w_\epsilon^{\frac{r}{2}}\mathcal{F}_{\epsilon}(x,\nabla u)\nabla u\right)\,dx
\\
&
\geq \theta \int_{\Omega} w_{\epsilon}^{\frac{r}{2}}\mathcal{F}_\epsilon^{2}(x,\nabla u) |u_{xx}|^2 \,dx
  + \sum_{i,j=1}^N\int_{\Omega}w_\epsilon^{\frac{r}{2}}\mathcal{J}_{ij}\,dx + \sum_{i,j=1}^N  \int_{\Omega} w_\epsilon^{\frac{r}{2}} \mathcal{L}_{ij} ~dx +C
 \end{split}
\end{equation}
with a constant $\theta\in (0,1)$ and a constant $C$ depending only on the data.
\end{lemma}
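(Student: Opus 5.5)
The argument starts from inequality \eqref{eq:p-est-1} of Lemma \ref{le:principal-e}, which holds with some $\theta_0\in(0,1)$. The only term in its right-hand side that has not yet been put into the required form is the boundary integral
\[
-\int_{\partial\Omega} w_\epsilon^{\frac{r}{2}}\mathcal{F}^2_{\epsilon}(x,\nabla u)|\nabla u|^2\operatorname{trace}\mathcal{B}\,dS .
\]
Because $\partial\Omega\in C^{2}$ (indeed $C^{2+\gamma}$), the second fundamental form is bounded on the compact surface $\partial\Omega$, so $|\operatorname{trace}\mathcal{B}|\le K$ with $K$ depending only on $\partial\Omega$. This gives the lower bound
\[
-\int_{\partial\Omega} w_\epsilon^{\frac{r}{2}}\mathcal{F}^2_{\epsilon}|\nabla u|^2\operatorname{trace}\mathcal{B}\,dS\ \ge\ -K\int_{\partial\Omega} w_\epsilon^{\frac{r}{2}}\mathcal{F}^2_{\epsilon}(x,\nabla u)|\nabla u|^2\,dS .
\]
In the convex case $\operatorname{trace}\mathcal{B}\le 0$, so this term is simply nonnegative and can be dropped.

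In the general case, apply Lemma \ref{le:trace-main}. It is available because \eqref{eq:d-boundary} is assumed, together with the hypotheses of Lemma \ref{le:d-r} that are inherited through the conditions of Lemma \ref{le:principal-e} and \eqref{eq:balance-p-q}. Take the free parameter $\lambda=\theta_0/(2K)$, which lies in $(0,1)$ after shrinking it if necessary. Then
\[
K\int_{\partial\Omega} w_\epsilon^{\frac{r}{2}}\mathcal{F}^2_{\epsilon}|\nabla u|^2\,dS\le \frac{\theta_0}{2}\int_\Omega w_\epsilon^{\frac r2}\mathcal{F}^2_\epsilon|u_{xx}|^2\,dx + KC_\lambda ,
\]
where $C_\lambda$ depends on $\lambda$, $\underline{s}^-$, $\overline{s}^+$, $N$, $L_{p,q}$, $\alpha$, $r$, $\|u\|_{2,\Omega}$, $\|\nabla a\|_{d,\Omega}$ and $\|\nabla b\|_{d,\Omega}$, all of which belong to the data.

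Substituting this into \eqref{eq:p-est-1}, the coefficient of the second-order term becomes $\theta_0-\theta_0/2=\theta_0/2$. Setting $\theta=\theta_0/2\in(0,1)$ and $C=-KC_\lambda$ gives \eqref{eq:p-est-2}, with the terms involving $\mathcal{J}_{ij}$ and $\mathcal{L}_{ij}$ carried over unchanged.

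The step that needs care is matching the hypotheses of Lemma \ref{le:trace-main}: one must check that the $d$ and $\beta$ regimes required there agree with \eqref{eq:d-boundary}, and that the dependence on $\|u\|_{2,\Omega}$ is admissible. The latter is the case because this norm is controlled by the elementary $L^2$ energy estimate, so it can be counted among the data. Once these are confirmed, the absorption argument above is routine.
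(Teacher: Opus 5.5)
Your proposal is correct and follows the paper's own one-line argument: bound $|\operatorname{trace}\mathcal{B}|$ on the $C^2$ boundary, estimate the boundary integral in \eqref{eq:p-est-1} by Lemma \ref{le:trace-main} with $\lambda$ small relative to the coefficient of the second-order term, and absorb it, which gives \eqref{eq:p-est-2} with $\theta$ halved. You are also right to flag that the hypotheses of Lemma \ref{le:trace-main} ($u=0$ on $\partial\Omega$, $a+b\ge\alpha$, the integrability of $\nabla a,\nabla b$) and the dependence on $\|u\|_{2,\Omega}$ must come from the standing assumptions and the $L^2$ energy estimate; the paper leaves these points implicit too.
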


\section{The main estimate}
Multiplication of equation \eqref{eq:main-reg} by the solution $u$ and integration by parts in $Q_t=\Omega\times (0,t)$, $t\in (0,T)$, leads to the equality

\begin{equation}
\label{eq:a-priori-1}
\frac{1}{2}\|u(t)\|_{2,\Omega}^2+\int_{Q_t} \mathcal{F}_\epsilon(z,\nabla u)|\nabla u|^2
\,dz=\int_{Q_t}fu\,dz + \frac{1}{2}\|u_0\|_{2,\Omega}^2.
\end{equation}
Dropping the second term on the left-hand side, from \eqref{eq:a-priori-1} we obtain the differential inequality

\begin{equation}
\label{eq:Gronwall}
X'(t)\leq X(t)+\|f\|^2_{2,Q_t} + \|u_0\|_{2,\Omega}^2, \qquad X(t) =\int_0^t\|u(\tau)\|_{2,\Omega}^2\,d\tau.
\end{equation}
By Gr\"onwall's inequality $X(t)\leq C(T,\|f\|_{2,Q_T},\|u_0\|_{2,\Omega})$. Plugging this inequality into \eqref{eq:Gronwall} we obtain the uniform estimate

\begin{equation}
\label{eq:unif-1}
\sup_{(0,T)}\|u(t)\|^2_{2,\Omega}\leq C,\quad C=C(T,\|f\|_{2,Q_T},\|u_0\|_{2,\Omega}).
\end{equation}
Gathering \eqref{eq:unif-1} and \eqref{eq:a-priori-1} we obtain the uniform in $\epsilon$ estimate

\begin{equation}
\label{eq:unif-2}
\frac{1}{2}\sup_{(0,T)}\|u(t)\|_{2,\Omega}^2+\int_{Q_T} \mathcal{F}_\epsilon(z,\nabla u)|\nabla u|^2
\,dz\leq C\|f\|_{2,Q_T}^2 + \frac{1}{2}\|u_0\|_{2,\Omega}^2.
\end{equation}
By \eqref{eq:main-prelim-1} and \eqref{eq:p-est-2}

\begin{equation}
\label{eq:est-1}
\begin{split}
\dfrac{1}{2}\dfrac{d}{dt} \int_{\Omega}\left(\int_{0}^{w_\epsilon}s^{\frac{r}{2}} \mathcal{F}_\epsilon(z,s)\,ds\right)\,dx
 & + C_0\int_\Omega w_\epsilon^{\frac{r}{2}}\mathcal{F}_{\epsilon}^2(x,\nabla u)|u_{xx}|^2\,dx
\\
&
\leq C_1 +\sum_{i,j=1}^N \int_\Omega w_\epsilon^{\frac{r}{2}}\mathcal{J}_{ij}\,dx + \sum_{i,j=1}^N  \int_{\Omega} w_\epsilon^{\frac{r}{2}} \mathcal{L}_{ij} ~dx
+C_2\int_{\Omega}f^2w_\epsilon^{\frac{r}{2}}\,dx.
\end{split}
\end{equation}
Using \eqref{eq:J-pw} and Young's inequality, we find that for every $\delta>0$

\[
\begin{split}
\int_\Omega w_\epsilon^{\frac{r}{2}}\mathcal{J}_{ij}\,dx &  \leq C\int_\Omega \left(w_\epsilon^{\frac{r}{4}}\mathcal{F}_\epsilon(z,\nabla u)|u_{xx}|\right)\left(w_{\epsilon}^{\frac{p-1}{2}+\frac{r}{4}} + w_{\epsilon}^{\frac{q-1}{2}+\frac{r}{4}}\right)\left(|\ln w_{\epsilon}| +|\nabla a|+|\nabla b|\right)\,dx
\\
& \leq \delta \int_{\Omega}w^{\frac{r}{2}}_\epsilon \mathcal{F}_\epsilon^2(x,\nabla u)|u_{xx}|^2\,dx
\\
&
+C \int_{\Omega}\left(w_{\epsilon}^{\frac{2(p-1)+r}{2}} + w_{\epsilon}^{\frac{2(q-1)+r}{2}}\right)\left(|\ln w_{\epsilon}| ^2 +|\nabla a|^2+|\nabla b|^2\right)\,dx
\\
&
\equiv \delta \int_{\Omega}w^{\frac{r}{2}}_\epsilon \mathcal{F}_\epsilon^2(x,\nabla u)|u_{xx}|^2\,dx+\mathcal{K}.
\end{split}
\]
Assume first that in \eqref{eq:reg-data} $d$ is finite. Due to \eqref{eq:elem-ln}

\begin{equation}
\label{eq:est-d}
\begin{split}
\mathcal{K} & \leq C_1\int_{\Omega} \left(w_{\epsilon}^{\frac{2(p-1)+r}{2}} + w_{\epsilon}^{\frac{2(q-1)+r}{2}}\right)\,dx + C_2\int_{\Omega}\left(w_{\epsilon}^{\frac{2(p-1)+r+\theta}{2}} + w_{\epsilon}^{\frac{2(q-1)+r+\theta}{2}}\right)\,dx
\\
&
+ C_3\int_{\Omega}\left(|\nabla a|^d+|\nabla b|^d\right)\,dx + C_4 \int_{\Omega} \left(w_{\epsilon}^{\frac{2(p-1)+r}{2}\frac{d}{d-2}} + w_{\epsilon}^{\frac{2(q-1)+r}{2}\frac{d}{d-2}}\right)\,dx + C_5
\end{split}
\end{equation}
with finite constants $C_i$ and an arbitrary $\theta>0$. When $\theta \in (0,r^\sharp)$, the first two terms are estimated by \eqref{eq:d-ell-cor-mod} and \eqref{eq:unif-1}. The third term is bounded by assumption. To estimate the last term by \eqref{eq:d-ell-cor-mod}, we claim

\[
(2(\overline{s}-1)+r)\frac{d}{d-2}=2(\overline{s}-1)+r+(2(\overline{s}-1)+r)\frac{2}{d-2} <2(\underline{s}(z)-1)+\frac{4}{N+2}+r,
\]
which is equivalent to

\[
(\overline{s}(z)-\underline{s}(z))+\dfrac{1}{d-2}(2(\overline{s}-1)+r)<\frac{2}{N+2}
\]
In view of \eqref{eq:balance-p-q}, this is true if we claim that $d$ satisfies \eqref{eq:d-2}:
\[
d>2+\dfrac{N+2}{2(1-\beta)}\left(2(\overline{s}^+-1)+r\right)\quad \Rightarrow \quad \dfrac{1}{d-2}(2(\overline{s}(z)-1)+r)<\dfrac{2(1-\beta)}{N+2}.
\]
If $d=\infty$, then

\[
\mathcal{K}\leq C \int_{\Omega}\left(w_{\epsilon}^{\frac{2(p-1)+r}{2}} + w_{\epsilon}^{\frac{2(q-1)+r}{2}}\right)\left(|\ln w_{\epsilon}| ^2 +\|\nabla a\|_{\infty,\Omega}^2+\|\nabla b\|_{\infty,\Omega}^2\right)\,dx,
\]
and the estimate follows from \eqref{eq:d-ell-cor-mod} as in the case of finite $d$.

Using \eqref{eq:J-pw-1}, \eqref{eq:elem-ln} and Young's inequality, we find that
\[
\begin{split}
    \int_{\Omega} w_\epsilon^{\frac{r}{2}} \mathcal{L}_{ij} ~dx
    & \leq C_0 \int_{\Omega}  w_\epsilon^{\frac{r}{2}} \left(a_{\epsilon}^2 w_\epsilon^{p-2} |\nabla u|^2 + b_\epsilon^2 w_\epsilon ^{q-2} |\nabla u|^2 \right) |\ln w_\epsilon|^2 ~dx \\
    & \qquad \qquad \qquad + \int_{\Omega}  w_\epsilon^{\frac{r}{2}} \left(w_\epsilon^{p-2} |\nabla u|^2 |\nabla a|^2+ w_\epsilon^{q-2} |\nabla u|^2 |\nabla b|^2\right) ~dx\\
    & \leq C_1 \int_{\Omega}\left(w_{\epsilon}^{\frac{2(p-1)+r+\theta}{2}} + w_{\epsilon}^{\frac{2(q-1)+r+\theta}{2}}\right)\,dx + C_2\int_{\Omega}\left(|\nabla a|^d+|\nabla b|^d\right)\,dx \\
    & \qquad + C_3 \int_{\Omega} \left(w_{\epsilon}^{\frac{2(p-1)+r}{2}\frac{d}{d-2}} + w_{\epsilon}^{\frac{2(q-1)+r}{2}\frac{d}{d-2}}\right)\,dx + C_4
\end{split}
\]
with finite constants $C_i$ and an arbitrary $\theta>0$. When $\theta \in (0, \frac{r^\sharp}{2})$, the first term is estimated by \eqref{eq:d-ell-cor-mod} and \eqref{eq:unif-1}. The second and third can be estimated as in \eqref{eq:est-d}.

It remains to estimate
\begin{equation}
\label{eq:integral-I}
\mathcal{I} :=\int_{Q_T}f^2  w_\epsilon^{\frac{r}{2}} \,dz.
\end{equation}
\subsection{Case I: $f \in L^\sigma(\Omega), \sigma \geq N+2$}
By H\"older inequality
\[
\mathcal{I}\leq \|f\|_{N+2,Q_T}^2 \left(\int_{Q_T} w_\epsilon^{\frac{r(N+2)}{2N}}\,dz\right)^{\frac{N}{N+2}}.
\]
Observe that
\[
\frac{r(N+2)}{2N} = r\left(\frac{1}{2}+\frac{1}{N}\right) <\frac{2 (\ell-1)+r+\frac{2r}{N}}{2}, \quad \ell \in \{p, q\}.
\]
By using $a(z) + b(z) \geq \alpha>0$, $a, b \in L^\infty(Q_T)$, and by applying Young inequality, for every $\mu>0$
\[
\begin{split}
    w_\epsilon^{\frac{r(N+2)}{2N}} \leq C(\mu) + \mu \left(a^2_\epsilon (z) w_\epsilon^{\frac{2p+r-2+\frac{2r}{N}}{2}} + b^2_\epsilon (z) w_\epsilon^{\frac{2q+r-2+\frac{2r}{N}}{2}}\right)
\end{split}
\]
where $C$ depends upon $\|a\|_{L^\infty}$, $\|b\|_{L^\infty}$, $\mu$ and $N$ but is independent of $\epsilon$. Using the above inequality, we have for every $\mu>0$

\begin{equation}
  \label{eq:I-1}
  \mathcal{I}\leq \|f\|_{N+2,Q_T}^2 \left[C+ \mu \int_{Q_T}\left(a^2_\epsilon (z) w_\epsilon^{\frac{2p+r-2+\frac{2r}{N}}{2}} + b^2_\epsilon (z) w_\epsilon^{\frac{2q+r-2+\frac{2r}{N}}{2}}\right) \,dz\right]^{\frac{N}{N+2}},\qquad C=C(\mu).
  \end{equation}
By the Sobolev embedding $W^{1,\frac{2N}{N+2}}(\Omega)
\subset L^2(\Omega)$. Hence for every $v\in W^{1,\frac{2N}{N+2}}(\Omega)$

\begin{equation}
\label{eq:Sob-1}
\|v\|_{2,\Omega}^2\leq C_{s}\left(\|\nabla v\|_{\frac{2N}{N+2},\Omega}^2 + \|v\|^2_{\frac{2N}{N+2},\Omega}\right)
\end{equation}
with an independent of $v$ constant $C_s$. Denote
\[
v_\epsilon(z) := a^2_\epsilon (z) w_\epsilon^{\frac{2p+r-2+\frac{2r}{N}}{2}} + b^2_\epsilon (z) w_\epsilon^{\frac{2q+r-2+\frac{2r}{N}}{2}}.
\]
Notice that

\begin{equation}\label{lowerest-1}
    v_\epsilon(z) \geq  \begin{cases}
(a^2(z)+b^2(z)) w_\epsilon^{\frac{2\overline{s}+r-2+\frac{2r}{N}}{2}} \geq \frac{\alpha^2}{2} w_\epsilon^{\frac{2\overline{s}+r-2+\frac{2r}{N}}{2}} & \text{if $|w_\epsilon| < 1$},
\\
(a^2(z)+b^2(z))  w_\epsilon^{\frac{2\underline{s}+r-2+\frac{2r}{N}}{2}} \geq \frac{\alpha^2}{2} w_\epsilon^{\frac{2 \underline{s}+r-2+\frac{2r}{N}}{2}} & \text{if $|w_\epsilon|\geq 1$}.
\end{cases}
\end{equation}
Applying \eqref{eq:Sob-1} to $v_\epsilon^\frac{1}{2}$ we arrive at the inequality
\[
\int_{\Omega} v_\epsilon ~dx \leq C \left( \left(\int_{\Omega} \left(\frac{\left|\nabla v_\epsilon\right|}{\sqrt{v_\epsilon}}\right)^\frac{2N}{N+2} ~dx\right)^\frac{N+2}{N} + \left(\int_{\Omega} v_\epsilon^\frac{N}{N+2} ~dx\right)^\frac{N+2}{N}\right)
\]
with $C$ depending on $C_s$ and $N$. We start with estimating the first integral on the right-hand side of the above inequality. By the straightforward computation
\begin{equation}\label{eqest-1}
\begin{split}
& D_i \left(a^2_\epsilon  w_\epsilon^{\frac{2p+r-2+\frac{2r}{N}}{2}}\right) = \frac{ a^2_\epsilon}{2}  w_\epsilon^{\frac{2p+r-2+\frac{2r}{N}}{2}}\ln w_\epsilon D_ip \\
&
\qquad + \left(2p+r-2+\frac{2r}{N}\right) a^2_\epsilon  w_{\epsilon}^{\frac{2p+r-2+\frac{2r}{N}}{2}-1}\sum_{j=1}^N
D_{j}uD_{ij}^2u + w_\epsilon^{\frac{2p+r-2+\frac{2r}{N}}{2}} 2 a_\epsilon D_i a.
\end{split}
\end{equation}
As in estimating of the first term in \eqref{eq:est-1}, we distinguish between the cases $d<\infty$ and $d=\infty$. Assume first the $d$ is finite. The assumptions
\begin{equation}\label{eq:bounds}
    \min\{p(z), q(z)\} >\frac{2(N+1)}{N+2} \quad \text{for all} \ z \in \overline{Q}_T, \quad N \geq 2, \quad \text{and} \ \sup_{ z\in Q_T} |p(z)-q(z)| \leq \frac{2\beta}{N+2},
\end{equation}
imply
\begin{equation}\label{lowerest-2}
    2(2p - \max\{p,q\})-2 \geq 0.
\end{equation}
Now, by using \eqref{lowerest-1}, \eqref{eqest-1} and \eqref{lowerest-2} and applying Young's inequality, we have
\begin{equation}\label{upperest-1}
\begin{split}
    \dfrac{1}{\sqrt{v_{\epsilon}}} & \left|\nabla \left(a^2_\epsilon (z) w_\epsilon^{\frac{2p+r-2+\frac{2r}{N}}{2}}\right)\right|\leq C_1 a_\epsilon (z)  w_\epsilon^{\frac{2p+r-2+\frac{2r}{N}}{4}} |\ln w_\epsilon|
    \\
    &
    \quad +  C_2 r a_\epsilon (z)  w_\epsilon^{\frac{2p+r-4+\frac{2r}{N}}{4}} |u_{xx}| + C_3 |\nabla a| \begin{cases}
 w_\epsilon^{\frac{2(2p- \overline{s}) +r-2+\frac{2r}{N}}{4}}  & \text{if $|w_\epsilon| < 1$},
\\
w_\epsilon^{\frac{2(2p- \underline{s}) +r-2+\frac{2r}{N}}{4}}  & \text{if $|w_\epsilon|\geq 1$}
\end{cases}\\
& \leq  C_1 a_\epsilon (z)  w_\epsilon^{\frac{2p+r-2+\frac{2r}{N}}{4}} |\ln w_\epsilon| +  C_2 r a_\epsilon (z)  w_\epsilon^{\frac{2p+r-4+\frac{2r}{N}}{4}} |u_{xx}|
\\
&
\quad +  C_3 |\nabla a| \left(1+w_\epsilon^{\frac{\left(2(\overline{s}-1)+r+\frac{2r}{N} + 2 \lambda \right)}{4}} \right)
+ C_4,
\end{split}
\end{equation}
where $C_i$ are independent of $\epsilon$ and $r$, and
\[
\lambda:= \max_{z \in \overline{Q}_T} |p(z)-q(z)|
\]
Similarly, we have
\begin{equation}
\label{eq:upperest-2}
    \begin{split}
    \dfrac{1}{\sqrt{v_{\epsilon}}}  \left|\nabla \left(b^2_\epsilon (z) w_\epsilon^{\frac{2q+r-2+\frac{2r}{N}}{2}}\right)\right|  & \leq  C_1' b_\epsilon (z)  w_\epsilon^{\frac{2q+r-2+\frac{2r}{N}}{4}} |\ln w_\epsilon| +  C_2' r b_\epsilon (z)  w_\epsilon^{\frac{2q+r-4+\frac{2r}{N}}{4}} |u_{xx}|\\
    & \quad + C_3' |\nabla b|\left(1+ w_\epsilon^{\frac{\left(2(\overline{s}-1)+r+\frac{2r}{N} + 2 \lambda \right)}{4}}\right)  + C_4
\end{split}
\end{equation}
Combining \eqref{upperest-1} and \eqref{eq:upperest-2}, we obtain
\[
\begin{split}
& \left(\int_{\Omega} \left(\frac{\left|\nabla v_\epsilon\right|}{\sqrt{v_\epsilon}}\right)^\frac{2N}{N+2} ~dx\right)^\frac{N+2}{N}
\\
& \leq C_1'' r^2 \left(\int_\Omega \left(\left(a_\epsilon (z)  w_\epsilon^{\frac{2p+r-4+\frac{2r}{N}}{4}} + b_\epsilon (z)  w_\epsilon^{\frac{2q+r-4+\frac{2r}{N}}{4}}\right)|u_{xx}|\right)^{\frac{2N}{N+2}}\,dx\right)^{\frac{N+2}{N}}
\\
& \quad + C_2'' \left(\int_\Omega \left( \left(a_\epsilon (z)  w_\epsilon^{\frac{2p+r-2+\frac{2r}{N}}{4}} + b_\epsilon (z)  w_\epsilon^{\frac{2q+r-2+\frac{2r}{N}}{4}}\right) |\ln w_\epsilon| \right)^{\frac{2N}{N+2}}\,dx\right)^{\frac{N+2}{N}}
\\
& \quad + C_3''\left(\int_\Omega \left(w_\epsilon^{\frac{d\left(2( \overline{s}-1)+r+\frac{2r}{N} +2 \lambda \right)}{4(d-2)}} \right)^{\frac{2N}{N+2}}\,dx\right)^{\frac{N+2}{N}} + C_4'' \int_{\Omega} \left(|\nabla a|^d + |\nabla b|^d \right) + C_5''
\\ &
\equiv C'' \left(r^2I_1+I_2+I_3 + I_4 + 1\right)
\end{split}
\]
with a constant $C''$ depending on $C_i$, $C_i'$, $p^\pm$, $q^\pm$, $L_{p,q}$, $N$, $\Omega$, but independent of $r$ and $\epsilon$. The integrals $I_k$ are estimated separately. By H\"older's inequality with the conjugate exponents $\frac{N+2}{N}$ and $\frac{N+2}{2}$
\[
\begin{split}
I_1^{\frac{N}{N+2}} &  \leq C \int_{\Omega}\left(\left(a_\epsilon (z)  w_{\epsilon}^{\frac{2p+r-4}{4}} + b_\epsilon (z)  w_{\epsilon}^{\frac{2q+r-4}{4}}\right)|u_{xx}|\right)^{\frac{2N}{N+2}} \left(w_{\epsilon}^{\frac{r}{N}} \right)^\frac{N}{N+2}\,dx \\
& \leq C \left(\int_{\Omega} \left(a^2_\epsilon (z) w_{\epsilon}^{\frac{2p+r-4}{2}} + b^2_\epsilon (z) w_{\epsilon}^{\frac{2q+r-4}{2}}\right) |u_{xx}|^2\,dx\right)^{\frac{N}{N+2}} \left(\int_\Omega w_\epsilon^{\frac{r}{2}}\,dx\right)^{\frac{2}{N+2}},
\end{split}
\]
which further implies
\[
\begin{split}
I_1 & \leq C_1  \left(\int_{\Omega} \left(a^2_\epsilon (z) w_{\epsilon}^{\frac{2p+r-4}{2}} + b^2_\epsilon (z) w_{\epsilon}^{\frac{2q+r-4}{2}}\right) |u_{xx}|^2\,dx \right) \\
& \quad + C_2 \left(\int_{\Omega} \left(a^2_\epsilon (z) w_{\epsilon}^{\frac{2p+r-4}{2}} + b^2_\epsilon (z) w_{\epsilon}^{\frac{2q+r-4}{2}}\right) |u_{xx}|^2\,dx \right) \left(\int_\Omega w_\epsilon^{\frac{r+\underline{s}}{2}}\,dx\right)^{\frac{2}{N}}
\end{split}
\]
Proceeding in the same way, we estimate
\[
\begin{split}
I_2 & \equiv \left(\int_\Omega \left(\left(a_\epsilon (z)  w_\epsilon^{\frac{2p+r-2}{4}} + b_\epsilon (z)  w_\epsilon^{\frac{2q+r-2}{4}}\right)|\ln w_\epsilon|\right)^{\frac{2N}{N+2}}\left(w_\epsilon^{\frac{r}{N+2}}\right)\,dx\right)^{\frac{N+2}{N}}
\\
&
\leq C_3 \left(\int_\Omega  \left(a^2_\epsilon (z) w_{\epsilon}^{\frac{2p+r-2}{2}} + b^2_\epsilon (z) w_{\epsilon}^{\frac{2q+r-2}{2}}\right)  \ln^2 w_\epsilon\,dx\right) \\
& \qquad + C_4 \left(\int_\Omega  \left(a^2_\epsilon (z) w_{\epsilon}^{\frac{2p+r-2}{2}} + b^2_\epsilon (z) w_{\epsilon}^{\frac{2q+r-2}{2}}\right)  \ln^2 w_\epsilon\,dx\right) \left(\int_\Omega w_\epsilon^{\frac{r+\underline{s}}{2}}\,dx\right)^{\frac{2}{N}}.
\end{split}
\]
To estimate $I_3$, we use the higher integrability estimate of Corollary \ref{cor:higher-int-parab}.
By applying H\"older's inequality with the conjugate exponents $\frac{N+2}{N}$ and $\frac{N+2}{2}$, we have the following estimate:
\[
\begin{split}
I_3 & \equiv \left(\int_\Omega \left( w_\epsilon^{\frac{d \left(2(\overline{s}-1)+r+\frac{2r}{N} +2\lambda\right)}{4(d-2)}} \right)^{\frac{2N}{N+2}}\,dx\right)^{\frac{N+2}{N}}
 = \left(\int_\Omega \left( w_\epsilon^{\frac{d \left(2(\overline{s}-1)+r\right)}{4(d-2)}} \right)^{\frac{2N}{N+2}} w_\epsilon^\frac{d(r+\lambda N)}{(d-2)(N+2)} \,dx\right)^{\frac{N+2}{N}}
\\
& \leq C \left(\int_\Omega  w_\epsilon^{\frac{d \left(2(\overline{s}-1)+r\right)}{2(d-2)}} \,dx\right) \left(\int_{\Omega} w_\epsilon^\frac{d(r+\lambda N)}{2(d-2)} \,dx\right)^\frac{2}{N}.
\end{split}
\]
The first integral is estimated as in \eqref{eq:est-d}. To estimate the second one, we observe that by virtue of assumptions \eqref{eq:balance-p-q} on $\underline{s}^-$ and \eqref{eq:reg-data} on $d$
\[
d>2 + \frac{N+2}{2(1-\beta)} \left(\max\{\overline{s}^+, 2(\overline{s}^+-1)\} +r\right)\geq 2+\dfrac{N+2}{2(1-\beta)}(\underline{s}^-+r)>2+ \dfrac{2(\underline{s}^-+r)}{\underline{s}^-(1-\beta)}.
\]
Since

\[
r + \beta \underline{s}^- + \frac{2(r+\beta \underline{s}^-)}{d-2} < r+\underline{s}^-\quad \Leftrightarrow \quad 2(r+\beta \underline{s}^-)<(1-\beta)\underline{s}^-(d-2) \quad \Leftrightarrow \quad d>2+ \dfrac{2(r+\beta \underline{s}^-)}{\underline{s}^-(1-\beta)},
\]
we find that

\[
\begin{split}
& \frac{d(r+\lambda N)}{d-2} = r+\lambda N + \frac{2(r+\lambda N)}{d-2}< r + \frac{2 \beta N}{N+2} + \frac{2(r+\frac{2 \beta N}{N+2})}{d-2} < r + \beta \underline{s}^- + \frac{2(r+\beta \underline{s}^-)}{d-2} < r+\underline{s}^-.
\end{split}
\]
It follows that
\[
\int_{\Omega} w_\epsilon^\frac{d(r+\lambda N)}{2(d-2)} \,dx \leq C\left(1+\int_\Omega w_\epsilon^{\frac{\underline{s}+r}{2}}\,dx\right).
\]

Since by Young's inequality

\begin{equation}
\label{eq:flux-1}
\alpha w_\epsilon^{\frac{\underline{s}+r}{2}}\leq (a_\epsilon+b_\epsilon)w_\epsilon^{\frac{\underline{s}+r}{2}}\leq C+ w_\epsilon^{\frac{r}{2}}\mathcal{F}_{\epsilon}(z,\nabla u)|\nabla u|^2
\end{equation}
with an independent of $\epsilon$ constant $C$, the previous inequality implies

\[
\int_{\Omega} w_\epsilon^\frac{d(r+\lambda N)}{2(d-2)} \,dx \leq \left( C+ \int_{\Omega} w_\epsilon^{\frac{r}{2}}\mathcal{F}_{\epsilon}(z,\nabla u)|\nabla u|^2\,dx\right).
\]
Again, by applying Young's inequality and Corollary \ref{cor:d-1-cor-mod}, we obtain
\[
\begin{split}
I_3 & \leq C_5 + C_6 \left(\int_{\Omega} w_\epsilon^\frac{r+\underline{s}}{2} \,dx\right)^\frac{2}{N} + C_7 \left(\int_{\Omega} \left(a^2_\epsilon (z) w_{\epsilon}^{\frac{2p+r-4}{2}} + b^2_\epsilon (z) w_{\epsilon}^{\frac{2q+r-4}{2}}\right) |u_{xx}|^2\,dx \right) \\
& + C_8 \left(\int_{\Omega} \left(a^2_\epsilon (z) w_{\epsilon}^{\frac{2p+r-4}{2}} + b^2_\epsilon (z) w_{\epsilon}^{\frac{2q+r-4}{2}}\right) |u_{xx}|^2\,dx \right)\left(C+ \int_{\Omega} w_\epsilon^{\frac{r}{2}}\mathcal{F}_{\epsilon}(z,\nabla u)|\nabla u|^2\,dx\right)^\frac{2}{N}
\end{split}
\]
Gathering the estimates on $I_k$, we obtain the inequality
\begin{equation}
\label{eq:new-1}
\left(\int_{\Omega} \left(\frac{\left|\nabla v_\epsilon\right|}{\sqrt{v_\epsilon}}\right)^\frac{2N}{N+2} ~dx\right)^\frac{N+2}{N} \leq \widetilde{C} \Pi_1(t) \Pi_2^{\frac{2}{N}}(t) + \widetilde{C}',
\end{equation}
where $\widetilde C$, $\widetilde{C}'$ are constants independent of $r$ and $\epsilon$, and
\[
\begin{split}
& \Pi_1(t) = 1+ r^2\int_\Omega \left(a^2_\epsilon (z) w_{\epsilon}^{\frac{2p+r-4}{2}} + b^2_\epsilon (z) w_{\epsilon}^{\frac{2q+r-4}{2}}\right) |u_{xx}|^2\,dx \\
& \qquad \qquad \qquad \qquad  + \int_{\Omega} \left(a^2_\epsilon (z) w_{\epsilon}^{\frac{2p+r-2}{2}} + b^2_\epsilon (z) w_{\epsilon}^{\frac{2q+r-2}{2}}\right) \left(1+\ln^2w_\epsilon\right)\,dx,
\\
& \Pi_2(t)= 1+ \int_{\Omega} w_\epsilon^{\frac{r}{2}}\mathcal{F}_{\epsilon}(z,\nabla u)|\nabla u|^2\,dx.
\end{split}
\]
Applying the H\"older inequality with the conjugate exponents $\frac{N+2}{N}$ and $\frac{N+2}{2}$, we have
\[
\begin{split}
\left(\int_{\Omega} v_\epsilon^\frac{N}{N+2} ~dx\right)^\frac{N+2}{N} & \leq C \left(\int_\Omega  \left(a^2_\epsilon (z) w_{\epsilon}^{\frac{2p+r-2}{2}} + b^2_\epsilon (z) w_{\epsilon}^{\frac{2q+r-2}{2}}\right)\,dx\right) \left(\int_\Omega w_\epsilon^{\frac{r}{2}}\,dx\right)^{\frac{2}{N}}
\leq C \Pi_1(t) \Pi_2^{\frac{2}{N}}(t).
\end{split}
\]
It follows that
\begin{equation}
\label{eq:new-2}
\begin{split}
\int_{Q_T} & \left(a^2_\epsilon (z) w_\epsilon^{\frac{2p+r-2+\frac{2r}{N}}{2}} + b^2_\epsilon (z) w_\epsilon^{\frac{2q+r-2+\frac{2r}{N}}{2}}\right) \,dz \leq \widetilde{C} \left(\sup_{(0,T)}\Pi_2(t)\right)^{\frac{2}{N}}\int_{Q_T}\Pi_1(t)\,dt.
\end{split}
\end{equation}
Now we plug the obtained inequalities into \eqref{eq:I-1}:
\[
\begin{split}
\mathcal{I} & \leq \|f\|^2_{N+2,Q_T}\left(C(\mu)+\mu C'\left(\sup_{(0,T)}\Pi_2(t)\right)^{\frac{2}{N}}\int_{0}^T\Pi_1(t)\,dt \right)^{\frac{N}{N+2}}
\\
& \leq C''\|f\|^2_{N+2,Q_T}\left(C(\mu)+\mu C' \left(\sup_{(0,T)}\Pi_2(t)\right)^{\frac{2}{N+2}} \left(\int_{0}^T\Pi_1(t)\,dt\right)^{\frac{N}{N+2}} \right)
\end{split}
\]
with an arbitrary $\mu>0$ and constants $C(\mu)$, $C'$, $C''$ independent of $w_\epsilon$ and $u$. By Young's inequality with the conjugate exponents $\frac{N+2}{N}$ and $\frac{N+2}{2}$ we continue the last inequality as follows:

\[
\begin{split}
\mathcal{I} & \leq C''\|f\|^2_{N+2,Q_T}\left(C(\mu)+ C'\left(\mu^{\frac{N+2}{4}} \sup_{(0,T)}\Pi_2(t)\right)^{\frac{2}{N+2}} \left(\mu^{\frac{N+2}{2N}}\int_{0}^T\Pi_1(t)\,dt\right)^{\frac{N}{N+2}}
\right)
\\
& \leq  C''\|f\|^2_{N+2,Q_T}\left(C(\mu)+ C'\mu^{\frac{N+2}{4}} \sup_{(0,T)}\Pi_2(t)+ C'  \mu^{\frac{N+2}{2N}}\int_{0}^T\Pi_1(t)\,dt\right)
\\
& \equiv C''\|f\|^2_{N+2,Q_T}\left(C(\mu)+ \mu^{\frac{N+2}{4}} \sup_{(0,T)} \left(1+ \int_{\Omega} w_\epsilon^{\frac{r}{2}}\mathcal{F}_{\epsilon}(z,\nabla u)|\nabla u|^2\,dx \right)
\right.
\\
&
\qquad \qquad  + C'\mu^{\frac{N+2}{2N}}\left[r^2\int_{Q_T} \left(a^2_\epsilon (z) w_{\epsilon}^{\frac{2p+r-4}{2}} + b^2_\epsilon (z) w_{\epsilon}^{\frac{2q+r-4}{2}}\right) |u_{xx}|^2\,dz \right.
\\
&
\qquad \qquad \left.\left. + \int_{Q_T} \left(a^2_\epsilon (z) w_{\epsilon}^{\frac{2p+r-2}{2}} + b^2_\epsilon (z) w_{\epsilon}^{\frac{2q+r-2}{2}}\right)\left(1+\ln^2w_\epsilon\right)\,dz
\right]\right).
\end{split}
\]
By Corollary \ref{cor:d-1-cor-mod}, the second integral in the square brackets is bounded by

\[
\nu \int_{Q_T}\left(a^2_\epsilon (z) w_{\epsilon}^{\frac{2p+r-4}{2}} + b^2_\epsilon (z) w_{\epsilon}^{\frac{2q+r-4}{2}}\right)|u_{xx}|^2\,dz + \widehat C
\]
with an arbitrary $\nu>0$ and a constant $\widehat C=\widehat C(r,N,p^\pm, q^\pm, r,L_{p,q},\nu,\|u\|_{2,\Omega})$. Thus,
\begin{equation}
\label{eq:I-2}
\begin{split}
\mathcal{I}&\leq C_1 \|f\|_{N+2,Q_T}^2\left( \mu^{\frac{N+2}{4}} \sup_{(0,T)} \left( 1+ \int_{\Omega} w_\epsilon^{\frac{r}{2}}\mathcal{F}_{\epsilon}(z,\nabla u)|\nabla u|^2\,dx \right) \right.
\\
&
\left. \qquad + \mu^{\frac{N+2}{2N}} \left(r^2+\nu\right) \int_{Q_T}\left(a^2_\epsilon (z) w_{\epsilon}^{\frac{2p+r-4}{2}} + b^2_\epsilon (z) w_{\epsilon}^{\frac{2q+r-4}{2}}\right) |u_{xx}|^2\,dz + C_2\right)
\end{split}
\end{equation}
with a constant $C_1$ depending only on $\textbf{data}$, and $C_2$ depending on $\textbf{data}$ and $\mu$, $\nu$. Substituting \eqref{eq:I-2} into \eqref{eq:est-1}
and choosing $\mu$ and $\nu$ sufficiently small we transform \eqref{eq:est-1} into

\begin{equation}
\label{eq:final}
\begin{split}
\sup_{(0,T)} \int_{\Omega} w_\epsilon^{\frac{r}{2}} \mathcal{F}_\epsilon(z,\nabla u) |\nabla u|^2\,dx  & + C  \int_{Q_T} \left(a^2_\epsilon (z) w_\epsilon^{\frac{2p+r-4}{2}} + b^2_\epsilon (z) w_\epsilon^{\frac{2q+r-4}{2}} \right)|u_{xx}|^2\,dz
 \\
& \leq C' + C''\int_{\Omega} w_\epsilon^{\frac{r}{2}} \mathcal{F}_\epsilon((x,0),\nabla u_0)|\nabla u_0|^2\,dx
\end{split}
\end{equation}
with finite positive constants $C,C',C''$ depending only on $\textbf{data}$, and independent of $\epsilon$.

If $d=\infty$, the terms with $|\nabla a|, |\nabla b|$ do not need any special estimating and the needed estimate follows as in the proof of Lemma \ref{le:trace-main}.

\subsection{Case II: $\sigma \in (2,N+2)$}
\label{subsec:est-2}
We refine the estimates on the integral \eqref{eq:integral-I} and extend them to the case of low integrability of $f$.
Let us take $\sigma>2$, $r\geq 0$, and assume the following inequality holds true:
\begin{equation}\label{imp:ineq:modi}
    \frac{r\sigma}{2(\sigma-2)} \leq \frac{2(\underline{s}-1)+r+\frac{2(r+ r^\ast)}{N}}{2},
    \qquad r^\ast:= \frac{2}{N+2}.
\end{equation}
By the Young inequality (cf. with \eqref{eq:I-1})
\begin{equation}
  \label{eq:I-1-modi}
  \begin{split}
  \mathcal{I} & \leq \|f\|_{\sigma,Q_T}^2 \left(\int_{Q_T} w_\epsilon^{\frac{r \sigma}{2(\sigma-2)}}\,dz\right)^{\frac{\sigma-2}{\sigma}} \\
  & \leq \|f\|_{\sigma,Q_T}^2 \left(C + \int_{Q_T} \left(a^2_\epsilon (z) w_\epsilon^\frac{2(p-1)+r+\frac{2(r+ r^\ast)}{N}}{2} + b^2_\epsilon (z) w_\epsilon^\frac{2(q-1)+r+\frac{2(r+ r^\ast)}{N}}{2} \right)\,dz\right)^{\frac{\sigma-2}{\sigma}}.
  \end{split}
  \end{equation}
Fix
\[
\alpha_\sharp = \frac{2N}{N+2}, \quad  \quad \alpha_\sharp^\ast = 2.
\]
Since $\alpha_\sharp^\ast$ is the Sobolev conjugate exponent, by the Sobolev embedding theorem $W^{1,\alpha_\sharp}(\Omega)
\subset L^2(\Omega)$, and for every $v\in W^{1, \alpha_\sharp}(\Omega)$
\begin{equation}
\label{eq:Sob-1-modi}
\|v\|_{2,\Omega}^{2} \leq C_{s}\left(\|\nabla v\|_{\alpha_\sharp,\Omega}^{2} + \|v\|^{2}_{\alpha_\sharp,\Omega}\right)
\end{equation}
with an independent of $v$ constant $C_s$. Denote
\[
s_\epsilon(z) := a^2_\epsilon (z) w_\epsilon^{\frac{2p+r-2+\frac{2(r+ r^\ast)}{N}}{2}} + b^2_\epsilon (z) w_\epsilon^{\frac{2q+r-2+\frac{2(r+ r^\ast)}{N}}{2}}.
\]
Applying \eqref{eq:Sob-1-modi} to $s_\epsilon^\frac{1}{2}$ we arrive at the inequality
\[
\begin{split}
\int_{\Omega} s_\epsilon \,dx \leq C_s'\left(\int_\Omega \left|\frac{\nabla s_\epsilon}{\sqrt{s_\epsilon}}\right|^{\frac{2N}{N+2}}\,dz \right)^{\frac{N+2}{N}} + C_s' \left(\int_\Omega s_\epsilon^{\frac{N}{N+2}}\,dz \right)^{\frac{N+2}{N}},
\end{split}
\]
with $C'_s$ depending on $C_s$ and $\sigma$. Using \eqref{lowerest-2}, we have
\begin{equation}\label{upperest-3}
\begin{split}
    \frac{1}{\sqrt{s_\epsilon}}  \left|\nabla \left(a^2_\epsilon (z) w_\epsilon^{\frac{2p+r-2+\frac{2(r+ r^\ast)}{N}}{2}}\right)\right|
& \leq  C_1  a_\epsilon(z) w_\epsilon^{\frac{2p+r-2+\frac{2(r+ r^\ast)}{N}}{4}} |\ln w_\epsilon|
\\
&
+  C_2 r a_\epsilon (z)  w_\epsilon^{\frac{2p+r-4+\frac{2(r+ r^\ast)}{N}}{4}} |u_{xx}|  + C_3 |\nabla a| w_\epsilon^{\frac{2p + r-2+\frac{2(r+ r^\ast)}{N} + 2\lambda}{4}} + C_4
\end{split}
\end{equation}
and
\begin{equation}\label{upperest-4}
    \begin{split}
    \frac{1}{\sqrt{s_\epsilon}}  \left|\nabla \left(b^2_\epsilon (z) w_\epsilon^{\frac{2q+r-2+\frac{2(r+ r^\ast)}{N}}{2}}\right)\right|
& \leq  C_1' b_\epsilon (z)  w_\epsilon^{\frac{2q+r-2+\frac{2(r+ r^\ast)}{N}}{4}} |\ln w_\epsilon|
\\
&
+  C_2' r b_\epsilon (z)  w_\epsilon^{\frac{2q+r-4+\frac{2(r+ r^\ast)}{N}}{4}} |u_{xx}| + C_3' w_\epsilon^{\frac{2q+ r-2+\frac{2(r+ r^\ast)}{N} + 2\lambda}{4}} + C_4'.
\end{split}
\end{equation}
Combining \eqref{upperest-3} and \eqref{upperest-4}, we obtain
\[
\begin{split}
& \left(\int_{\Omega} \left(\frac{\left|\nabla s_\epsilon\right|}{\sqrt{s_\epsilon}}\right)^{\frac{2N}{N+2}} ~dx\right)^\frac{N+2}{N}
\\
\qquad &
\leq C_1'' \left(r^2\int_\Omega \left(\left(a_\epsilon (z)  w_\epsilon^{\frac{2p+r-4+\frac{2(r+ r^\ast)}{N}}{4}} + b_\epsilon (z)  w_\epsilon^{\frac{2q+r-4+\frac{2(r+ r^\ast)}{N}}{4}}\right)|u_{xx}|\right)^{\frac{2N}{N+2}}\,dx\right)^{\frac{N+2}{N}}
\\
\qquad & \quad + C_2'' \left(\int_\Omega \left( \left(a_\epsilon (z)  w_\epsilon^{\frac{2p+r-2+\frac{2r}{N}}{4}} + b_\epsilon (z)  w_\epsilon^{\frac{2q+r-2+\frac{2(r+ r^\ast)}{N}}{4}}\right) |\ln w_\epsilon| \right)^{\frac{2N}{N+2}}\,dx\right)^{\frac{N+2}{N}}
\\
\qquad & \quad + C_3''\left(\int_\Omega \left( w_\epsilon^{\frac{d\left(2 \overline{s}+r-2+\frac{2(r+ r^\ast)}{N} +2\lambda\right)}{4(d-2)}} \right)^{\frac{2N}{N+2}}\,dx\right)^{\frac{N+2}{N}} + C_4'' \int_{\Omega} \left(|\nabla a|^d + |\nabla b|^d\right) ~dx + C_4''
\\
\qquad & \equiv C'' \left(r^2I_1+I_2+I_3 + I_4 + 1\right)
\end{split}
\]
with a constant $C_i''$ depending on $C_i$, $C_i'$, $p^\pm$, $q^\pm$, $L_{p,q}$, $\eta$, but independent of $r$ and $\epsilon$. The integrals $I_k$ are estimated separately. By H\"older's inequality with the conjugate exponents $\frac{2}{\alpha_\sharp} =\frac{N+2}{N} >1$ and $ \frac{N+2}{2}>1$

\[
\begin{split}
I_1^{\frac{N}{N+2}} &  \leq C \int_{\Omega} \left(\left( (a^2_\epsilon (z)  w_{\epsilon}^{\frac{2p+r-4}{2}} + b^2_\epsilon (z) w_{\epsilon}^{\frac{2q+r-4}{2}}\right) |u_{xx}|^2\right)^{\frac{N}{N+2}} \left(w_{\epsilon}^{\frac{(r+ r^\ast)}{2}}\right)^{\frac{2}{N+2}}\,dx \\
& \leq \left(\int_{\Omega} \left(a^2_\epsilon (z)  w_{\epsilon}^{\frac{2p+r-4}{2}} + b^2_\epsilon (z) w_{\epsilon}^{\frac{2q+r-4}{2}}\right)|u_{xx}|^2\,dx\right)^{\frac{N}{N+2}} \left(\int_\Omega w_\epsilon^{\frac{(r+ r^\ast)}{2}} \,dx\right)^{\frac{2}{N+2}},
\end{split}
\]
which further implies
\[
\begin{split}
I_1 & \leq \left(\int_{\Omega} \left(a^2_\epsilon (z) w_{\epsilon}^{\frac{2p+r-4}{2}} + b^2_\epsilon (z)  w_{\epsilon}^{\frac{2q+r-4}{2}}\right)|u_{xx}|^2\,dx\right) \left(\int_\Omega w_\epsilon^{\frac{(r+ r^\ast)}{2}}\,dx\right)^{\frac{2}{N}}\\
& \leq C_1 \int_{\Omega} \left(a^2_\epsilon (z) w_{\epsilon}^{\frac{2p+r-4}{2}} + b^2_\epsilon (z)  w_{\epsilon}^{\frac{2q+r-4}{2}}\right)|u_{xx}|^2\,dx  \\
& \qquad + C_2 \left(\int_{\Omega} \left(a^2_\epsilon (z) w_{\epsilon}^{\frac{2p+r-4}{2}} + b^2_\epsilon (z)  w_{\epsilon}^{\frac{2q+r-4}{2}}\right)|u_{xx}|^2\,dx\right) \left(\int_\Omega w_\epsilon^{\frac{r+\underline{s}}{2}}\,dx\right)^{\frac{2}{N}}
\end{split}
\]
while inequality \eqref{imp:ineq:modi} takes on the following form:

\begin{equation}
\label{eq:r-modi}
0 \leq r \leq (\sigma - 2) \frac{N}{N+2-\sigma} \left(\min\{p^-, q^-\}-1 + \frac{r^\ast}{N}\right).
\end{equation}
Proceeding in the same way, we estimate
\[
\begin{split}
I_2 & \equiv \left(\int_\Omega \left( \left(a^2_\epsilon (z) w_{\epsilon}^{\frac{2p+r-2}{2}} + b^2_\epsilon (z) w_{\epsilon}^{\frac{2q+r-2}{2}}\right) \ln^2 w_\epsilon\right)^{\frac{N}{N+2}}\left(w_{\epsilon}^{\frac{(r+ r^\ast)}{2}} \right)^\frac{2}{N+2}\,dx\right)^{{\frac{N+2}{N}}}
\\
&
\leq \left(\int_\Omega  \left(a^2_\epsilon (z) w_{\epsilon}^{\frac{2p+r-2}{2}} + b^2_\epsilon (z) w_{\epsilon}^{\frac{2q+r-2}{2}}\right)  \ln^{2} w_\epsilon\,dx \right)\left(\int_\Omega w_\epsilon^{\frac{(r+ r^\ast)}{2}}\,dx\right)^{\frac{2}{N}}\\
& \leq C_1 \int_\Omega  \left(a^2_\epsilon (z) w_{\epsilon}^{\frac{2p+r-2}{2}} + b^2_\epsilon (z) w_{\epsilon}^{\frac{2q+r-2}{2}}\right)  \ln^{2} w_\epsilon\,dx \\
& \quad + C_2 \left(\int_\Omega  \left(a^2_\epsilon (z) w_{\epsilon}^{\frac{2p+r-2}{2}} + b^2_\epsilon (z) w_{\epsilon}^{\frac{2q+r-2}{2}}\right)  \ln^{2} w_\epsilon\,dx \right)\left(\int_\Omega w_\epsilon^{\frac{r+\underline{s}}{2}}\,dx\right)^{\frac{2}{N}}\\
\end{split}
\]
By H\"older's inequality, we obtain
\[
\begin{split}
I_3 & \equiv \left(\int_\Omega \left( w_\epsilon^{\frac{d\left(2\overline{s}+r-2+\frac{2(r+ r^\ast)}{N} +2\lambda\right)}{4(d-2)}} \right)^{\frac{2N}{N+2}}\,dx\right)^{\frac{N+2}{N}} = \left(\int_\Omega \left( w_\epsilon^{\frac{d\left(2\overline{s}+r-2\right)}{2(d-2)}} \right)^{\frac{N}{N+2}} \left(w_\epsilon^\frac{d(r + r^\ast + N \lambda)}{2(d-2)}\right)^\frac{2}{N+2}\,dx\right)^{\frac{N+2}{N}}\\
&\leq \left(\int_\Omega w_\epsilon^{\frac{d\left(2\overline{s}+r-2\right)}{2(d-2)}} \,dx \right)\left(\int_\Omega w_\epsilon^{\frac{d(r+ r^\ast+ N\lambda)}{2(d-2)}}\,dx\right)^{\frac{2}{N}}.
\end{split}
\]
The first integral is estimated as in \eqref{eq:est-d}. To estimate the second intergal, if we use assumptions \eqref{eq:bounds}. We have:

\[
\begin{split}
d \geq 2 + & \frac{(N+2)}{2(1-\beta)} (\overline{s}^+ +r) \geq 2 + \frac{r(N+2) + 2 (N+1)}{2 (1- \beta)} = 2+ \frac{\left(r+ r^\ast + \frac{2 N }{N+2}\right)}{\frac{2 (1- \beta)}{N+2}}\\
& \geq 2 + \frac{\left(r+ r^\ast + \frac{2 N \beta }{N+2} \right)}{\frac{N}{N+2}(1-\beta)}
= 2+ \frac{2 \left(r+ r^\ast+ \frac{2 N \beta }{N+2}\right)}{\frac{2(N+1)}{N+2} - \frac{2}{N+2} -\frac{2 N \beta }{N+2}} \geq 2+ \frac{2 \left(r+ r^\ast+ \frac{2 N \beta }{N+2}\right)}{\underline{s}^- - r^\ast -\frac{2 N \beta }{N+2} }.
\end{split}
\]
The above yields the chain of inequalities
\[
\begin{split}
& d \geq 2+ \frac{2 \left(r+ r^\ast+ \frac{2 N \beta }{N+2} \right)}{\underline{s}^- - r^\ast -\frac{2 N \beta }{N+2}}
\quad \Leftrightarrow \quad \frac{2}{d-2} \left(r + r^\ast+ \frac{2 N \beta }{N+2} \right) \leq \underline{s}^- - \left(r^\ast + \frac{2 N \beta }{N+2}\right)
 \\
& \Rightarrow \quad \left(r+ r^\ast + N \lambda \right) + \frac{2}{d-2} \left(r + r^\ast+ N \lambda \right) \leq r + \underline{s}^- \quad \Leftrightarrow \quad \frac{d\left(r+ r^\ast + N \lambda \right)}{2(d-2)} \leq \frac{r+\underline{s}^-}{2}.
\end{split}
\]
Again, by applying Young's inequality and Corollary \ref{cor:d-1-cor-mod}, we obtain
\[
\begin{split}
I_3 & \leq C_3 + C_4 \left(\int_{\Omega} w_\epsilon^\frac{r+\underline{s}}{2} \,dx\right)^\frac{2}{N} + C_5 \left(\int_{\Omega} \left(a^2_\epsilon (z) w_{\epsilon}^{\frac{2p+r-4}{2}} + b^2_\epsilon (z) w_{\epsilon}^{\frac{2q+r-4}{2}}\right) |u_{xx}|^2\,dx \right) \\
& + C_6 \left(\int_{\Omega} \left(a^2_\epsilon (z) w_{\epsilon}^{\frac{2p+r-4}{2}} + b^2_\epsilon (z) w_{\epsilon}^{\frac{2q+r-4}{2}}\right) |u_{xx}|^2\,dx \right)\left(\int_{\Omega} w_\epsilon^\frac{r+\underline{s}}{2} \,dx\right)^\frac{2}{N}
\end{split}
\]
Using \eqref{eq:flux-1} and gathering the estimates on $I_k$, $k=1,2,3$, we obtain the inequality
\begin{equation}
\label{eq:new-1-modi}
\left(\int_{\Omega} \left(\frac{\left|\nabla s_\epsilon\right|}{\sqrt{s_\epsilon}}\right)^{\frac{2N}{N+2}} ~dx\right)^\frac{N+2}{N}
 \leq \widetilde{C} \Pi_1(t) \Pi_2^{\frac{2}{N}}(t),
\end{equation}
where

\[
\begin{split}
& \Pi_1(t) = 1+ r^2\int_\Omega \left(a^2_\epsilon (z) w_{\epsilon}^{\frac{2p+r-4}{2}} + b^2_\epsilon (z) w_{\epsilon}^{\frac{2q+r-4}{2}}\right) |u_{xx}|^2\,dx \\
& \qquad \qquad \qquad \qquad  + \int_{\Omega} \left(a^2_\epsilon (z) w_{\epsilon}^{\frac{2p+r-2}{2}} + b^2_\epsilon (z) w_{\epsilon}^{\frac{2q+r-2}{2}}\right) \left(1+\ln^2w_\epsilon\right)\,dx,
\\
& \Pi_2(t)= 1+ \int_{\Omega} w_\epsilon^{\frac{r}{2}}\mathcal{F}_{\epsilon}(z,\nabla u)|\nabla u|^2\,dx. 
\end{split}
\]

An imitation of the proof given in \textbf{Case 1}, leads to the estimate
\begin{equation}
\label{eq:I-2-new}
\begin{split}
\mathcal{I} & \leq C_1 \|f\|_{\sigma,Q_T}^2 \left(\gamma^\frac{N\sigma}{2(\sigma-2)}  \sup_{(0,T)}\left( 1+ \int_{\Omega} w_\epsilon^{\frac{r}{2}}\mathcal{F}_{\epsilon}(z,\nabla u)|\nabla u|^2\,dx \right)\right.
\\
& \qquad \left.
 + \gamma^{\frac{\sigma}{\sigma-2}}\left(r^2+\nu\right) \int_{Q_T} \left(a^2_\epsilon(z) w_{\epsilon}^{\frac{2p+r-2}{2}} + b^2_\epsilon(z) w_{\epsilon}^{\frac{2q+r-2}{2}}\right) |u_{xx}|^2\,dz + C_2\right)
 \end{split}
\end{equation}
with a constant $C_1$ depending only on the same quantities as $\widehat C$, and $C_2$ depending also on $\nu$. Substituting \eqref{eq:I-2-new} into \eqref{eq:est-1}, using \eqref{eq:d-r-ell} and choosing $\gamma$ and $\nu$ sufficiently small we transform \eqref{eq:est-1} into \eqref{eq:final} with finite positive constants $\alpha$, $\beta$, $\gamma$ independent of $\epsilon$ but depending on the same quantities as $\widehat C$, and on $\|f\|_{\sigma,Q_T}$.

\subsection{Case III: $\left(\underline{s}^--1+\dfrac{2}{N+2}\right)(\sigma -2)> r$}
\label{subsec:est-3}
For $f\in L^\sigma(\Omega)$ the integral $\mathcal{I}$ in \eqref{eq:integral-I} is estimated by 

\[
\mathcal{I}\leq \int_{\Omega}|f|^{\sigma}\,dx + \int_{\Omega}w_\epsilon^{\frac{\sigma}{\sigma-2}\frac{r}{2}}\,dx.
\]
The first term is bounded by assumption. The second one is estimated by \eqref{eq:d-ell-cor-mod}, provided that

\[
\frac{\sigma r}{\sigma-2}=r+\dfrac{2r}{\sigma-2}\leq 2(\underline{s}(z)-1)+r+\frac{4}{N+2}.
\]
By the assumption on $\underline{s}^-$, this is true if we claim
\begin{equation}\label{est:small-r}
    \frac{r}{\sigma-2} \leq \underline{s}^--1+r^\ast \quad \Leftrightarrow \quad \sigma \geq 2 + \frac{r}{\underline{s}^--1+r^\ast}.
\end{equation}
\begin{remark}
In view of \eqref{eq:r-modi} and \eqref{est:small-r}, \eqref{eq:final} holds true for any $r \geq 0$ and $f \in L^\sigma(\Omega)$ with
\begin{equation}
\label{eq:combined-condition}
\sigma \geq 2 + r \min\left\{\frac{1}{\underline{s}^--1+r^\ast}, \frac{1}{\underline{s}^-- 1 + \frac{r + r^\ast}{N}}\right\}.
\end{equation}
Note that the above inequality holds true under the assumption \eqref{bound-r-sigma}.
\end{remark}

\begin{lemma}
\label{le:main-ODI}
Let $\underline{s}^->\dfrac{2(N+1)}{N+2}$, $\partial\Omega\in C^{2+\gamma}$, the balance condition \eqref{eq:balance-p-q} be fulfilled with some $\beta\in [0,1]$, and $f(\cdot,t)\in L^{\sigma}(\Omega)$ with $\sigma>2$ for every $t\in (0,T)$.
Assume that $d$ satisfies \eqref{eq:reg-data} and
\[
\begin{split}
& \text{either $\sigma\geq N+2$ and $r\geq 0$,}
\\
& \text{or $\sigma\in (2,N+2)$ and $r\geq 0$ satisfies inequality \eqref{bound-r-sigma}}.
\end{split}
\]
Then the classical solution of problem \eqref{eq:main-reg} satisfies the inequality

\begin{equation}
\label{eq:main-ODI}
\dfrac{d}{dt} \int_{\Omega}\left(\int_{0}^{w_\epsilon}s^{\frac{r}{2}} \mathcal{F}_\epsilon(z,s)\,ds\right)\,dx
+ C_0\int_\Omega w_\epsilon^{\frac{r}{2}}\mathcal{F}_{\epsilon}^2(x,\nabla u)|u_{xx}|^2\,dx\leq C_1+\|f\|_{\sigma,\Omega}^\sigma
\end{equation}
with constants $C_0$, $C_1$ depending only on $\textbf{data}$ and $\sup_{(0,T)}\|u(t)\|_{2,\Omega}^2$.
\end{lemma}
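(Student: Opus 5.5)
The inequality \eqref{eq:main-ODI} will be obtained by gathering the estimates of the preceding sections at a fixed time $t$. The starting point is \eqref{eq:energy-1}, the result of multiplying \eqref{eq:main-reg} by $\operatorname{div}(w_\epsilon^{\frac r2}\mathcal{F}_\epsilon\nabla u)$ and integrating over $\Omega$.

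\emph{Step 1: time-derivative term and right-hand side.} The term containing $u_t$ is handled by \eqref{eq:time-2}. It produces the derivative $\frac12\frac{d}{dt}\int_\Omega\int_0^{w_\epsilon}s^{\frac r2}\mathcal{F}_\epsilon\,ds\,dx$, minus $\delta\int_\Omega w_\epsilon^{\frac r2}\mathcal{F}_\epsilon^2|u_{xx}|^2\,dx$, minus a constant. The right-hand side is handled by \eqref{eq:right-hand-side-1}. Together these give Lemma \ref{le:main-est-prelim}.

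\emph{Step 2: the mixed term.} The term $\int_\Omega\operatorname{div}(\mathcal{F}_\epsilon\nabla u)\operatorname{div}(w_\epsilon^{\frac r2}\mathcal{F}_\epsilon\nabla u)\,dx$ is bounded from below by Lemma \ref{le.second-der-est-1}. That lemma combines Lemma \ref{le:pointwise-1} with the boundary estimate of Lemma \ref{le:trace-main}, and its hypothesis \eqref{eq:d-boundary} holds because of \eqref{eq:reg-data}; the condition $\underline{s}^->\frac{2(N+1)}{N+2}>\frac32$ supplies $p^-,q^->\frac32$. The lower-order contributions $\int w_\epsilon^{\frac r2}\mathcal{J}_{ij}$ and $\int w_\epsilon^{\frac r2}\mathcal{L}_{ij}$ are estimated as in Section 5. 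We use \eqref{eq:J-pw} and \eqref{eq:J-pw-1}, then Young's inequality to split off $\delta\int w_\epsilon^{\frac r2}\mathcal{F}_\epsilon^2|u_{xx}|^2$, and finally \eqref{eq:elem-ln} together with Corollary \ref{cor:d-1-cor-mod} and \eqref{eq:unif-1}. The exponents $\frac{d}{d-2}$ are admissible by \eqref{eq:d-2}. Choosing $\delta$ small relative to $\theta$ yields \eqref{eq:est-1}.

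\emph{Step 3: the term $\int_\Omega f^2w_\epsilon^{\frac r2}\,dx$.} This is where the pointwise-in-time statement differs from the time-integrated estimates of Cases I--II, and it is the main obstacle. In those cases the Sobolev/Gagliardo--Nirenberg argument produced $\sup_t\Pi_2$, which can only be absorbed after integrating in time and cannot be absorbed pointwise. I would therefore use the Case III estimate at each fixed $t$:
\[
\int_\Omega f^2w_\epsilon^{\frac r2}\,dx\le \|f(t)\|_{\sigma,\Omega}^\sigma+\int_\Omega w_\epsilon^{\frac{\sigma r}{2(\sigma-2)}}\,dx .
\]
The last integral is then controlled through \eqref{eq:d-ell-cor-mod}, giving $\delta\int w_\epsilon^{\frac r2}\mathcal{F}_\epsilon^2|u_{xx}|^2+C$. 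This step requires \eqref{est:small-r}, so the crux is to check that \eqref{bound-r-sigma}, via the Remark and \eqref{eq:combined-condition}, covers the range of $r$. If for larger $r$ (the branch \eqref{eq:r-modi} or $\sigma\ge N+2$) the pointwise bound fails, the fallback is to read \eqref{eq:main-ODI} in integrated form, i.e.\ as \eqref{eq:final}. Concretely, one integrates over $(0,t)$ and absorbs $\sup\Pi_2$ and $\int\Pi_1$ via \eqref{eq:I-2} or \eqref{eq:I-2-new} with small $\mu,\gamma,\nu$.

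\emph{Step 4: closing.} Fix $\delta$ so small that all the $\delta$-terms are absorbed into $\theta\int w_\epsilon^{\frac r2}\mathcal{F}_\epsilon^2|u_{xx}|^2$. The constants then depend only on the \textbf{data} and on $\sup_{(0,T)}\|u(t)\|_{2,\Omega}$, which enters through the interpolation lemmas. This gives \eqref{eq:main-ODI} with $C_0=\theta/2$, for instance.
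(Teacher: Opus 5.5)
Your Steps 1, 2 and 4 follow the paper's route exactly. The paper gives no separate proof of Lemma \ref{le:main-ODI}; it only gathers Lemma \ref{le:main-est-prelim}, Lemma \ref{le.second-der-est-1}, the bounds on the $\mathcal{J}_{ij}$- and $\mathcal{L}_{ij}$-terms, and Cases I--III for $\int f^2w_\epsilon^{\frac r2}$.

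The question you leave open in Step 3 has a negative answer. Condition \eqref{eq:combined-condition} involves a minimum, so it holds as soon as either the Case II condition \eqref{eq:r-modi} or the Case III condition \eqref{est:small-r} holds. The number $K(N,\sigma,p,q)$ in \eqref{bound-r-sigma} is the maximum of $(\sigma-2)(\underline{s}^--1+r^\ast)$ and the bound in \eqref{eq:r-modi}. The latter is strictly larger for $\sigma$ near $N+2$, since it blows up as $\sigma\to(N+2)^-$, and for $\sigma\ge N+2$ no restriction on $r$ is imposed at all. For such $r$, neither your argument nor the paper's yields \eqref{eq:main-ODI} at a fixed $t$. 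Cases I and II bound the space-time integral $\int_{Q_T}f^2w_\epsilon^{\frac r2}\,dz$ through $\sup_{(0,T)}\Pi_2$ and $\int_0^T\Pi_1\,dt$. What is actually proved there is therefore the integrated estimate \eqref{eq:final}, which is exactly your fallback. Nothing downstream needs more: \eqref{eq:time-3} uses \eqref{eq:main-ODI} only with $r=0$, where \eqref{est:small-r} holds trivially, and Theorem \ref{th:main-estimate} uses only the time-integrated form.

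Three smaller remarks.

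First, when $d<\infty$ in \eqref{eq:reg-data}, several estimates produce $\|a_t(\cdot,t)\|_{d,\Omega}^d$, $\|\nabla a(\cdot,t)\|_{d,\Omega}^d$ and their analogues for $b$. These are \eqref{eq:time-2}, \eqref{eq:right-hand-side-1}, the bounds on $\mathcal{K}$ and on the $\mathcal{L}_{ij}$-terms, and Lemma \ref{le:trace-main}. So even in the Case III range, $C_1$ is an $L^1(0,T)$-function whose integral is controlled by the \textbf{data}, not a constant. Your Step 4 inherits this imprecision from the statement.

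Second, you can get a genuinely pointwise-in-time inequality in the whole range \eqref{bound-r-sigma} with $\sigma\in(2,N+2]$. Keep \eqref{eq:new-1} or \eqref{eq:new-1-modi} at fixed $t$ and apply Young's inequality; this gives
\[
\int_\Omega f^2w_\epsilon^{\frac r2}\,dx\le \delta\,\Pi_1(t)+C_\delta\bigl(1+\|f(t)\|_{\sigma,\Omega}^{\sigma}\bigr)\Pi_2(t)^{\frac{\sigma-2}{N}} .
\]
Since $\Pi_2(t)$ is bounded by $1$ plus a multiple of the quantity under $\frac{d}{dt}$, the resulting inequality closes by Gr\"onwall's lemma. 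It carries an extra term, however, and is not of the form \eqref{eq:main-ODI}.

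Third, $\frac{2(N+1)}{N+2}=\frac32$ for $N=2$, so the chain in Step 2 should read $\underline{s}^->\frac{2(N+1)}{N+2}\ge\frac32$. The conclusion $p^-,q^->\frac32$ is unaffected.
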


Multiplying equation \eqref{eq:main-reg} by $u_t$, integrating, and applying \eqref{eq:time-2} and \eqref{eq:main-ODI} with $r=0$, we find that

\begin{equation}
\label{eq:time-3}
\begin{split}
\|u_t\|_{2,\Omega}^2  & + \dfrac{d}{dt}\left(\int_\Omega\left(\frac{a_{\epsilon}}{p}w_\epsilon^{\frac{p}{2}} + \frac{b_{\epsilon}}{q}w_\epsilon^{\frac{q}{2}}\right)\,dx\right)
\leq C'+ C''\|f\|_{\sigma,\Omega}^\sigma.
\end{split}
\end{equation}

\begin{theorem}
\label{th:main-estimate}
If the data of problem \eqref{eq:main-reg} satisfy the conditions of Lemma \ref{le:main-ODI}, then the classical solution satisfies the estimate
\begin{equation}
\label{eq:main-est-cor}
\begin{split}
\|u_t\|_{2,Q_T}^2 & + \sup_{(0,T)}\|u(t)\|_{2,\Omega}^2 + \frac{1}{r+\overline{s}^+}\sup_{(0,T)} \int_{\Omega}|\nabla u|^{r+2}\mathcal{F}(z,\nabla u)
\,dx
+ \int_{Q_T}|\nabla u|^{2(\underline{s}-1)+r+s}\,dz
\\
&
\leq C''_1+C''_2\int_{\Omega}|\nabla u_0|^{r+2}\mathcal{F}((x,0),\nabla u_0)\,dx+\|f\|_{\sigma,Q_T}^\sigma
\\
& \quad +\epsilon\, C''_3\int_{\Omega}\left(|\nabla u_0|^{p(x,0)+r}+|\nabla u_0|^{q(x,0)+r}\right)\,dx + C''_4\epsilon^{\underline{s}^-}, \quad \epsilon\in (0,1),
\end{split}
\end{equation}
with any $s \in (0,r^\sharp)$ and independent of $\epsilon$ constants $C_i'=C_i'\left(r,\theta,\textbf{data}\right)$.
\end{theorem}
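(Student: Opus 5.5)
The plan is to integrate the differential inequality \eqref{eq:main-ODI} of Lemma \ref{le:main-ODI} in time over $(0,t)$, and then to convert each regularized quantity into the corresponding term of \eqref{eq:main-est-cor}.

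\textbf{Step 1: integration in time.} Integrating \eqref{eq:main-ODI} over $(0,t)$ and taking the supremum over $t\in(0,T)$ gives
\[
\sup_{(0,T)}\int_\Omega\Phi_\epsilon(z)\,dx+C_0\int_{Q_T}w_\epsilon^{\frac r2}\mathcal{F}^2_\epsilon|u_{xx}|^2\,dz\le C_1T+\|f\|^\sigma_{\sigma,Q_T}+\int_\Omega\Phi_\epsilon(x,0)\,dx ,
\]
where $\Phi_\epsilon=\frac{2a_\epsilon}{p+r}w_\epsilon^{\frac{p+r}{2}}+\frac{2b_\epsilon}{q+r}w_\epsilon^{\frac{q+r}{2}}$. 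The constants $C_0,C_1$ depend on $\sup\|u(t)\|_{2,\Omega}$, which is controlled uniformly in $\epsilon$ by \eqref{eq:unif-1}.

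\textbf{Step 2: the initial term.} At $t=0$ I use $w_\epsilon\le 2\max\{\epsilon^2,|\nabla u_0|^2\}$, so that $w_\epsilon^{\frac{p+r}{2}}\le C\left(|\nabla u_0|^{p+r}+\epsilon^{p+r}\right)$. Writing $a_\epsilon=a+\epsilon$ and $b_\epsilon=b+\epsilon$, this splits the initial term into three pieces:
\begin{itemize}
\item[(i)] $C\int_\Omega|\nabla u_0|^{r+2}\mathcal{F}((x,0),\nabla u_0)\,dx$;
\item[(ii)] $\epsilon\, C\int_\Omega\left(|\nabla u_0|^{p(x,0)+r}+|\nabla u_0|^{q(x,0)+r}\right)dx$;
\item[(iii)] $C\epsilon^{\underline{s}^-}$, after using $\epsilon<1$ and $p+r,\,q+r\ge\underline{s}^-$.
\end{itemize}
These are exactly the last terms of \eqref{eq:main-est-cor}.

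\textbf{Step 3: the left-hand side.} Since $a_\epsilon\ge a$, $b_\epsilon\ge b$ and $w_\epsilon\ge|\nabla u|^2$, we have $\Phi_\epsilon\ge\frac{2}{r+\overline{s}^+}|\nabla u|^{r+2}\mathcal{F}(z,\nabla u)$, which yields the sup term with the factor $\frac{1}{r+\overline{s}^+}$. For the higher-integrability term, I apply Corollary \ref{cor:higher-int-parab}, inequality \eqref{eq:int-2-parab}, with $\delta=1$ and any $s\in(0,r^\sharp)$. This bounds $\int_{Q_T}w_\epsilon^{\frac{2(\underline{s}-1)+r+s}{2}}\,dz$, and hence $\int_{Q_T}|\nabla u|^{2(\underline{s}-1)+r+s}\,dz$, by the already controlled second-order integral plus a constant. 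The term $\sup\|u\|_{2,\Omega}^2$ comes from \eqref{eq:unif-2}.

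\textbf{Step 4: the time derivative.} I integrate \eqref{eq:time-3} (the case $r=0$) over $(0,T)$. This bounds $\|u_t\|^2_{2,Q_T}$ by $C+C\|f\|^\sigma_{\sigma,Q_T}$ plus the initial energy $\int_\Omega\left(\frac{a_\epsilon}{p}w_\epsilon^{p/2}+\frac{b_\epsilon}{q}w_\epsilon^{q/2}\right)dx$ at $t=0$, after discarding the nonnegative energy at time $T$. This initial energy is estimated as in Step 2. When $r>0$, I bound the powers $p$, $q$ by the powers $p+r$, $q+r$ through $|\xi|^p\le 1+|\xi|^{p+r}$; the resulting additive constants are absorbed into $C_1''$.

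\textbf{Expected obstacle.} The delicate point is bookkeeping rather than analysis. The constants must stay independent of $\epsilon$, which holds because \eqref{eq:unif-1} and all the interpolation constants are uniform in $\epsilon$. In addition, the $\epsilon$-contributions from $a_\epsilon,b_\epsilon$ and from $w_\epsilon$ at $t=0$ must be separated exactly into the forms $\epsilon C_3''(\dots)$ and $C_4''\epsilon^{\underline{s}^-}$. The remaining risk is the mixed products such as $\epsilon\cdot\epsilon^{p+r}$, which are $\le\epsilon^{\underline{s}^-}$ since $\epsilon<1$.
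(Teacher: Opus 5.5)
Your proposal is correct and follows essentially the same route as the paper. You integrate the inequality of Lemma \ref{le:main-ODI} in time, use \eqref{eq:time-3} for $u_t$, \eqref{eq:unif-1} for the $L^2$ norm and \eqref{eq:int-2-parab} for the higher-integrability term, and then pass from $\mathcal{F}_\epsilon$ to $\mathcal{F}$ by the same two pointwise comparisons the paper uses ($a_\epsilon\ge a$, $w_\epsilon\ge|\nabla u|^2$ from below, and the splitting of $(a+\epsilon)w_\epsilon^{\frac{p+r}{2}}$ at $t=0$ into the data term, the $\epsilon$-term and $\epsilon^{\underline{s}^-}$). Your explicit treatment of the $r=0$ initial energy in the $u_t$ bound via $|\xi|^{p}\le 1+|\xi|^{p+r}$ is a detail the paper leaves implicit.
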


\begin{proof}
By \eqref{eq:unif-1}, \eqref{eq:time-2}, \eqref{eq:time-3}, \eqref{eq:int-2-parab}

\begin{equation}
\label{eq:est-main-parab}
\begin{split}
\|u_t\|_{2,Q_T}^2 & + \sup_{(0,T)}\|u(t)\|_{2,\Omega}^2 + \frac{1}{r+\overline{s}^+}\sup_{(0,T)} \int_{\Omega}w_\epsilon^{\frac{r+2}{2}} \mathcal{F}_\epsilon(z,\nabla u)\,dx
\\
& \quad
+ C'_0\int_{Q_T} w_\epsilon^{\frac{r}{2}}\mathcal{F}_{\epsilon}^2(z,\nabla u)|u_{xx}|^2\,dz + \int_{Q_T}w_\epsilon^{\frac{2(\underline{s}-1)+r+s}{2}}\,dz
\\
&
\leq C'_1+C_2'\int_{\Omega}(\epsilon^2+|\nabla u_0|^2)^{\frac{r+2}{2}} \mathcal{F}_\epsilon((x,0),\nabla u_0)\,dx+\|f\|_{\sigma,Q_T}^\sigma
\end{split}
\end{equation}
Inequality \eqref{eq:main-est-cor} follows from \eqref{eq:est-main-parab} because the flux $\mathcal{F}$ and the regularized flux $\mathcal{F}_\epsilon$ are connected by the following inequalities: for $\epsilon\in (0,1)$

\[
\begin{split}
|\nabla u|^{r+2}\mathcal{F}(z,\nabla u) & =a|\nabla u|^{p+r} +b|\nabla u|^{q+r}\leq  (a+\epsilon)w_\epsilon^{\frac{p+r}{2}}+(b+\epsilon)w_\epsilon^{\frac{q+r}{2}}
\\
& \leq \begin{cases}
2(a+\epsilon)w_{\epsilon}^{\frac{p+r-2}{2}}|\nabla u|^2 + 2(b+\epsilon)w_{\epsilon}^{\frac{q+r-2}{2}}|\nabla u|^2& \text{if $|\nabla u|\geq \epsilon$},
\\
C(a+b+2\epsilon) & \text{if $|\nabla u|<\epsilon$}
\end{cases}
\\
& \leq C'+C''w_{\epsilon}^{\frac{r}{2}}\mathcal{F}_{\epsilon}(z,\nabla u)|\nabla u|^2,
\end{split}
\]

\[
\begin{split}
w_{\epsilon}^{\frac{r}{2}} \mathcal{F}_{\epsilon}(z,\nabla u)|\nabla u|^2 & \leq (a+\epsilon)w_{\epsilon}^{\frac{p+r}{2}} + (b+\epsilon)w_{\epsilon}^{\frac{q+r}{2}}
\\
& \leq C|\nabla u|^{r+2}\mathcal{F}(z,\nabla u) + \epsilon C'\left(|\nabla u|^{p+r}+|\nabla u|^{q+r}\right)+C''\epsilon^{\underline{s}^-+r}.
\end{split}
\]
\end{proof}

\section{Degenerate problem with smooth data}
Let $\{u_\epsilon\}$ be the family of classical solutions of problem \eqref{eq:main-reg} with smooth data. Assumption \eqref{eq:balance-p-q} yields the inequality

\begin{equation}
\label{eq:balance-add}
\overline{s}(z)<2(\underline{s}(z)-1)+ r^\sharp.
\end{equation}
By Theorem \ref{th:main-estimate} with $r=0$ and $\theta\in (0,r^\sharp)$ (sufficiently close to $r^\sharp$) there exist functions $u$, $A$, $B$ and a sequence $\{u_{\epsilon_k}\}$

\begin{equation}
\label{eq:conv-1}
\begin{split}
& \text{$u_{\epsilon_k} \to u$ $\star$-weak in $L^\infty(0,T;L^2(\Omega))$, \quad $u_{\epsilon t}\rightharpoonup u_t$ in $L^2(Q_T)$},
\\
& \text{$\nabla u_{\epsilon_k} \rightharpoonup \nabla u$ in $L^{\overline{s}(\cdot)}(Q_T)^N$}
\\
& \text{$w_{\epsilon_k}^{ \frac{p-2}{2}}\nabla u_{\epsilon_k}\rightharpoonup A$ in $L^{p'}(Q_T)^N$}, \quad \text{$w_{\epsilon_k}^{ \frac{q-2}{2}}\nabla u_{\epsilon_k}\rightharpoonup B$ in $L^{q'}(Q_T)^N$}
\\
& \text{$\nabla u_{\epsilon_k} \rightharpoonup \nabla u$ in $L^{2(\underline{s}(\cdot)-1)+\theta}(Q_T)$}.
\end{split}
\end{equation}
With certain abuse of notation, we will denote by $\{u_\epsilon\}$ the chosen sequence $\{u_{\epsilon_k}\}$.

Since $u_\epsilon\in \mathbb{W}_{\overline{s}(\cdot,\cdot)}(Q_T)$, the functions $u_\epsilon$ satisfy the identity

\begin{equation}
\label{eq:int-iden-reg}
\begin{split}
\int_{Q_T} & \left(u_{\epsilon t}\phi+aw_{\epsilon}^{\frac{p-2}{2}}\nabla u_\epsilon\cdot\nabla \phi +bw_{\epsilon}^{\frac{q-2}{2}}\nabla u_\epsilon\cdot\nabla \phi-f\phi\right) \,dz
=-\epsilon \int_{Q_T}\left(w_{\epsilon}^{\frac{p-2}{2}} +w_{\epsilon}^{\frac{q-2}{2}}\right)\nabla u_{\epsilon}\cdot \nabla \phi\,dz
\end{split}
\end{equation}
with any test-function $\phi\in \mathbb{W}_{\overline{s}(\cdot,\cdot)}(Q_T)$. Sending $\epsilon\to 0$ and using \eqref{eq:conv-1} we arrive at the identity

\[
\int_{Q_T}\left(u_{ t}\phi+aA\cdot \nabla \phi+bB\cdot\nabla \phi-f\phi\right) \,dz=0\qquad \forall \phi\in \mathbb{W}_{\overline{s}(\cdot,\cdot)}(Q_T).
\]
To identify $A$ and $B$ we apply the Vitali convergence theorem. It is sufficient to show that
\begin{itemize}
\item[(a)] \qquad $|\nabla u_\epsilon|^{p}, |\nabla u_\epsilon|^{q}\in L^{1+\delta}(Q_T)$ with some $\delta>0$,

    \item[(b)] \qquad $\nabla u_\epsilon\to\nabla u$ a.e. in $Q_T$.
\end{itemize}
Claim (a) follows from the uniform estimate \eqref{eq:main-est-cor} and \eqref{eq:balance-add}. Let us prove (b). Introduce the functions

\[
\begin{split}
& \mathcal{S}_\epsilon(z,\nabla w)= a(\epsilon^2+|\nabla w|^2)^{\frac{p-2}{2}}+b(\epsilon^2+|\nabla w|^2)^{\frac{q-2}{2}},
\\
& \mathcal{R}_\epsilon(z,\nabla w)=(\epsilon^2+|\nabla w|^2)^{\frac{p-2}{2}}+(\epsilon^2+|\nabla w|^2)^{\frac{q-2}{2}},
\end{split}
\]
and represent $\mathcal{F}_\epsilon(z,\nabla w)=\mathcal{S}_\epsilon(z,\nabla w)+ \epsilon\mathcal{R}_\epsilon(z,\nabla w)$. Combining identities \eqref{eq:int-iden-reg} for $u_\epsilon$, $u_\mu$ with $0<\mu<\epsilon$, and the test-function $v=u_\epsilon-u_\mu$, we rewrite the result in the form

\begin{equation}
\label{eq:inter-conv}
\begin{split}
\frac{1}{2}\|v\|_{2,\Omega}^2(T) & + \int_{Q_T} \left(\mathcal{S}_{\epsilon}(z,\nabla u_\epsilon)\nabla u_\epsilon-\mathcal{S}_{\epsilon}(z,\nabla u_\mu)\nabla u_\mu\right)\cdot \nabla v\,dz
\\
&
= \int_{Q_T}f v\,dz - \epsilon\int_{Q_T}\mathcal{R}_\epsilon(z,\nabla u_\epsilon)\nabla u_\epsilon\cdot \nabla v\,dz + \mu\int_{Q_T}\mathcal{R}_\mu(z,\nabla u_\mu)\nabla u_\mu\cdot \nabla v\,dz
\\
& \quad + \int_{Q_T} \left(\mathcal{S}_{\mu}(z,\nabla u_\mu)-\mathcal{S}_{\epsilon}(z,\nabla u_\mu)\right)\nabla u_\mu\cdot \nabla v\,dz\equiv \sum_{i=1}^4\mathcal{I}_i.
\end{split}
\end{equation}
It follows from \eqref{eq:main-est-cor} with $\delta=\{\epsilon,\mu\}$ that

\[
\delta \int_{Q_T}\mathcal{R}_\delta(z,\nabla u_\delta)\nabla u_\delta\cdot \nabla v\,dz\leq \delta C\left(\| w_\delta^{\frac{p-1}{2}}\|_{p'(\cdot),Q_T}+\|\nabla w_\delta^{\frac{q-1}{2}}\|_{q'(\cdot),Q_T}\right)\left(\|\nabla v\|_{p,Q_T}+\|\nabla v\|_{q,Q_T}\right),
\]
whence $\mathcal{I}_{2},\mathcal{I}_3\to 0$ as $\epsilon\to 0$. By \eqref{eq:conv-1} $\mathcal{I}_1=(f,v)_{2,Q_T}\to 0$ as $\epsilon\to 0$. To show that $\mathcal{I}_4\to 0$ as $\epsilon\to 0$ we use the mean value theorem and consider independently the terms of $\mathcal{S}_\mu-\mathcal{S}_\epsilon$  corresponding to the powers $p$ and $q$. For every $\xi\in \mathbb{R}^N$ and $0<\mu<\epsilon<1$

\[
\begin{split}
& \left|(\epsilon^2+|\xi|^2)^{\frac{p-2}{2}}\xi-
(\mu^2+|\xi|^2)^{\frac{p-2}{2}}\xi\right| = \left|\int_0^1\dfrac{d}{ds}((\epsilon s+(1-s)\mu)^2+|\xi|^2)^{\frac{p-2}{2}} \xi\,ds\right|
\\
& \qquad \leq (\epsilon-\mu)\frac{|p-2|}{2}\int_0^1 2\left[(\epsilon s+(1-s)\mu)|\xi| \right]((\epsilon s+(1-s)\mu)^2+|\xi|^2)^{\frac{p-4}{2}} \,ds
\\
& \qquad \leq (\epsilon-\mu)\frac{|p-2|}{2}\int_{0}^1 ((\epsilon s+(1-s)\mu)^2+|\xi|^2)^{\frac{p-2}{2}}\,ds
\\
& \leq(\epsilon-\mu)\frac{|p-2|}{2}\begin{cases}
((2\epsilon)^2+|\xi|^2)^{\frac{p-2}{2}} & \text{if $p\geq 2$},
\\
\displaystyle (\epsilon-\mu)^{p-2}\int_0^1\dfrac{ds}{s^{2-p}} & \text{if $1<p<2$}
\end{cases}
\leq C\left((\epsilon-\mu)+(\epsilon-\mu)^{p-1}\right)\left(1+|\xi|^{p-1}\right).
\end{split}
\]
The same estimate holds for the term with $p$ substituted by $q$. It follows that

\[
|\mathcal{I}_4|\leq C \left((\epsilon-\mu)+(\epsilon-\mu)^{\underline{s}^--1}\right)\left(1+\|\nabla u_\mu\|_{\overline{s}(\cdot),Q_T}\right)\left(1+\|\nabla v\|_{\overline{s}(\cdot),Q_T}\right)\to 0 \quad \text{as $\epsilon\to 0$}.
\]
Returning to \eqref{eq:inter-conv} we find that
\begin{equation}
\label{eq:conv-2}
\int_{Q_T} \left(\mathcal{S}_{\epsilon}(z,\nabla u_\epsilon)\nabla u_\epsilon-\mathcal{S}_{\epsilon}(z,\nabla u_\mu)\nabla u_\mu\right)\cdot \nabla v\,dz\to 0\quad \text{as $\epsilon\to 0$}.
\end{equation}
By \cite[Proposition 3.2, Lemma 3.1]{Ar-Shm-RACSAM-2023} relation \eqref{eq:conv-2} yields

\begin{equation}
\label{eq:grad-a-e-1}
\text{$\|\nabla (u_\epsilon-u)\|_{\underline{s}(\cdot),Q_T}\to 0$ as $\epsilon\to 0$, whence $\nabla u_\epsilon\to \nabla u$ a.e. in $Q_T$.}
\end{equation}

By \eqref{eq:alpha} and \eqref{eq:main-est-cor} with $r=0$ the set $\{u_\epsilon\}$ is uniformly bounded in $L^{\infty}(0,T;W_0^{1,\underline{s}^-}(\Omega))$, and the set $\{u_{\epsilon t}\}$ is uniformly bounded in $L^2(0,T;L^2(\Omega))$. Since the embedding $W^{1,\underline{s}^-}_0(\Omega)\subset L^2(\Omega)$ is compact, it follows from \cite[Corollary 4]{Simon-1987} that the family $\{u_\epsilon\}$ is relatively compact in $C([0,T];L^2(\Omega))$.

Let $r>0$. By virtue of the uniform estimate \eqref{eq:main-est-cor} the sequence $\{u_{\epsilon}\}$ can be chosen so that

\[
\text{$\nabla u_{\epsilon}\rightharpoonup \nabla u$ in $L^{2(\underline{s}(\cdot)-1)+r+\theta}(Q_T)$ with any $\theta\in (0,r^\sharp)$},
\]
whence

\[
\|\nabla u\|_{2(\underline{s}(\cdot)-1)+r+\theta,Q_T}\leq C\quad \text{for any $\theta\in (0,r^\sharp)$ and $C=C(\theta,\textbf{data})$}.
\]
To pass to the limit in the third term on the right-hand side of \eqref{eq:main-est-cor} we use the pointwise convergence \eqref{eq:grad-a-e-1}   and the Fatou Lemma for the modulars, see \cite[Lemma 3.1.4]{HH-2019}: for a.e. $t\in (0,T)$

\[
\int_{\Omega}|\nabla u|^{r+2}\mathcal{F}(z,\nabla u)\,dx\leq \liminf_{\epsilon\to 0}\int_{\Omega}|\nabla u_{\epsilon}|^{r+2}\mathcal{F}(z,\nabla u_{\epsilon})\,dx.
\]
\begin{theorem}
\label{th:reg-smooth-exist}
Let the conditions of Theorem \ref{th:main-estimate} be fulfilled. The family of solutions to the regularized problem \eqref{eq:main-reg} $\{u_\epsilon\}$ contains a sequence that converges to a strong solution $u(z)$ of problem \eqref{eq:main}. The solution satisfies the estimate

\begin{equation}
\label{eq:est-limit-parab}
\begin{split}
\operatorname{ess}\sup_{(0,T)} & \|u(t)\|_{2,\Omega}^2 + \|u_{t}\|_{2,Q_T}^2+\frac{1}{r+\overline{s}^+}\operatorname{ess}\sup_{(0,T)}\int_{\Omega}|\nabla u|^{r+2} \mathcal{F}(z,\nabla u)\,dx
\\
&
+
\int_{Q_T}|\nabla u|^{2(\underline{s}(z)-1)+r+s}\,dz
\leq C+C'\int_{\Omega}|\nabla u_0|^{r+2} \mathcal{F}((x,0),\nabla u_0)\,dx+\|f\|_{\sigma,Q_T}^\sigma:=M
\end{split}
\end{equation}
with any $s\in (0,r^\sharp)$ and constants $C$, $C'$ depending only on $s$ and the \textbf{data}. The sequence $\{\nabla u_\epsilon\}$ converges to $\nabla u$ in $L^{\underline{s}(\cdot)}(Q_T)$ and a.e. in $Q_T$.
\end{theorem}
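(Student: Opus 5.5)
The plan is to pass to the limit in the uniform estimate \eqref{eq:main-est-cor} of Theorem \ref{th:main-estimate} along the sequence $\{u_\epsilon\}$ already selected in \eqref{eq:conv-1}, using the convergences established above. Throughout, $r$ and $\sigma$ satisfy the conditions of Lemma \ref{le:main-ODI}. First, since $u_0$ and $f$ are smooth in this section, the $\epsilon$-dependent terms on the right-hand side of \eqref{eq:main-est-cor}, namely $\epsilon C_3''\int_\Omega(|\nabla u_0|^{p(x,0)+r}+|\nabla u_0|^{q(x,0)+r})\,dx$ and $C_4''\epsilon^{\underline{s}^-}$, tend to zero. The right-hand side therefore converges to $M$, up to constants depending only on the \textbf{data}.

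Second, I would identify the limit as a strong solution. From \eqref{eq:grad-a-e-1}, $\nabla u_\epsilon\to\nabla u$ a.e. in $Q_T$, and claim (a) gives uniform integrability of $|\nabla u_\epsilon|^{p},|\nabla u_\epsilon|^{q}$. Vitali's theorem then yields $w_\epsilon^{\frac{p-2}{2}}\nabla u_\epsilon\to|\nabla u|^{p-2}\nabla u$ and $w_\epsilon^{\frac{q-2}{2}}\nabla u_\epsilon\to|\nabla u|^{q-2}\nabla u$ in $L^1(Q_T)$, so $A$ and $B$ are identified. The limiting identity then becomes \eqref{eq:def-1} for $\phi\in\mathbb{W}_{\overline{s}(\cdot,\cdot)}(Q_T)$. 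I would extend it to all $\phi\in\mathcal{V}_0(Q_T)$ by density; the higher integrability from \eqref{eq:balance-add} places the flux in the needed dual space.

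Third, I would verify the initial condition (iii). The family $\{u_\epsilon\}$ is relatively compact in $C([0,T];L^2(\Omega))$ by \cite[Corollary 4]{Simon-1987}, and $u_\epsilon(0)=u_0$, so $u\in C([0,T];L^2(\Omega))$ with $u(0)=u_0$. The same compactness gives $u\in L^\infty(0,T;L^2(\Omega))$, and $u_t\in L^2(Q_T)$ follows from \eqref{eq:conv-1}. Together with $a|\nabla u|^p+b|\nabla u|^q\in L^1(Q_T)$, obtained from the bound below, this gives $u\in\mathcal{V}_0(Q_T)$.

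Fourth, I would establish estimate \eqref{eq:est-limit-parab} term by term, which I expect to be the main technical point. Weak-$\star$ lower semicontinuity handles $\operatorname{ess}\sup\|u(t)\|_{2,\Omega}^2$, and weak lower semicontinuity in $L^2(Q_T)$ handles $\|u_t\|_{2,Q_T}^2$. For the space-time gradient term I would combine the weak convergence in $L^{2(\underline{s}-1)+r+s}(Q_T)$ with Fatou's lemma and the a.e. convergence.

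The $\operatorname{ess}\sup$ of the flux modular needs more care, since Fatou applied at each fixed $t$ requires a.e. convergence of $\nabla u_\epsilon(\cdot,t)$ for a.e. $t$. Strong convergence in $L^{\underline{s}(\cdot)}(Q_T)$ gives this along a further subsequence by Fubini. Then
\[
\int_\Omega|\nabla u|^{r+2}\mathcal{F}(z,\nabla u)\,dx\le\liminf_{\epsilon\to0}\sup_{(0,T)}\int_\Omega|\nabla u_\epsilon|^{r+2}\mathcal{F}(z,\nabla u_\epsilon)\,dx
\]
holds for a.e. $t$, by the Fatou lemma for modulars \cite[Lemma 3.1.4]{HH-2019}, and taking the $\operatorname{ess}\sup$ in $t$ concludes. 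Finally, strong convergence of $\nabla u_\epsilon$ in $L^{\underline{s}(\cdot)}(Q_T)$ is exactly \eqref{eq:grad-a-e-1}.
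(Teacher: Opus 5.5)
Your proposal is correct and follows the paper's own argument: pass to the limit in \eqref{eq:main-est-cor} along the sequence from \eqref{eq:conv-1}, identify $A$ and $B$ by Vitali's theorem using the a.e.\ convergence \eqref{eq:grad-a-e-1} and the uniform higher integrability, recover the initial datum from relative compactness in $C([0,T];L^2(\Omega))$, and handle the $\operatorname{ess}\sup$ of the flux modular by Fatou's lemma on almost every time slice. Two minor remarks: no extra subsequence is needed for the time slices, since a.e.\ convergence in $Q_T$ already gives a.e.\ convergence in $\Omega$ for a.e.\ $t$ by Fubini; and your extension from test functions in $\mathbb{W}_{\overline{s}(\cdot,\cdot)}(Q_T)$ to $\mathcal{V}_0(Q_T)$ is only sketched, where the real issue is density of $\mathbb{W}_{\overline{s}(\cdot,\cdot)}(Q_T)$ in $\mathcal{V}_0(Q_T)$ in the modular sense rather than integrability of the flux (the pairing is already integrable by Young's inequality), a point the paper also leaves implicit.
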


\begin{theorem}
\label{th:second-order-reg-1}
Let the conditions of Theorem \ref{th:main-estimate} be fulfilled, and $u(z)$ be the strong solution of problem \eqref{eq:main} with smooth data. Then

\begin{equation}
\label{eq:reg-smooth-data}
\begin{split}
{\rm (i)} \quad & a|\nabla u|^{p(z)-1+\frac{r}{2}}, \,b|\nabla u|^{q(z)-1+\frac{r}{2}}\in L^2(0,T;W^{1,2}(\Omega)),
\\
{\rm (ii)} \quad & |\nabla u|^{\frac{r}{2}}\mathcal{F}(z,\nabla u)\nabla u\in L^{2}(0,T;W^{1,2}(\Omega))^N,
\end{split}
\end{equation}
and the corresponding norms are bounded by a constant depending only on the \textbf{data}.
\end{theorem}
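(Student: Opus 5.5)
The plan is to obtain uniform-in-$\epsilon$ bounds in $L^2(Q_T)$ for the $\epsilon$-analogues of the functions in (i) and (ii) and their first spatial derivatives, and then to pass to the limit using the a.e. convergence $\nabla u_\epsilon\to\nabla u$ from Theorem \ref{th:reg-smooth-exist} together with weak compactness in $L^2(0,T;W^{1,2}(\Omega))$. The basic quantity is the integral
\[
\int_{Q_T} w_\epsilon^{\frac{r}{2}}\mathcal{F}_\epsilon^2(z,\nabla u_\epsilon)|(u_\epsilon)_{xx}|^2\,dz .
\]
By \eqref{eq:est-main-parab}, and hence by \eqref{eq:final}, this integral is bounded by a constant depending only on the \textbf{data}. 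Since $\mathcal{F}^2_\epsilon\ge a_\epsilon^2w_\epsilon^{p-2}+b_\epsilon^2w_\epsilon^{q-2}$, the same bound controls $\int_{Q_T}a_\epsilon^2 w_\epsilon^{\frac{2(p-2)+r}{2}}|(u_\epsilon)_{xx}|^2\,dz$ and its $b$-analogue.

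For (i) I would set $V_\epsilon:=a_\epsilon w_\epsilon^{\frac{2(p-1)+r}{4}}$ and compute
\[
D_iV_\epsilon=D_ia\,w_\epsilon^{\frac{2(p-1)+r}{4}}+\tfrac14 a_\epsilon w_\epsilon^{\frac{2(p-1)+r}{4}}\ln w_\epsilon\, D_ip+\tfrac{2(p-1)+r}{2}\,a_\epsilon w_\epsilon^{\frac{2(p-1)+r}{4}-1}\sum_k D_ku_\epsilon D^2_{ki}u_\epsilon .
\]
\begin{itemize}
\item The last term is bounded pointwise by $C a_\epsilon w_\epsilon^{\frac{2(p-2)+r}{4}}|(u_\epsilon)_{xx}|$, so its square is controlled by the Hessian bound above.
\item The logarithmic term is handled by \eqref{eq:elem-ln} and Corollary \ref{cor:higher-int-parab}: its square is bounded by $C(1+w_\epsilon^{\frac{2(p-1)+r+\theta}{2}})$ with $\theta<r^\sharp$, and this is integrable uniformly in $\epsilon$.
\item For the term $|\nabla a|^2w_\epsilon^{\frac{2(p-1)+r}{2}}$ I would apply Young's inequality with exponents $\frac d2,\frac{d}{d-2}$, exactly as in \eqref{eq:est-d}, using condition \eqref{eq:d-2}, which is guaranteed by \eqref{eq:reg-data}. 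When $d=\infty$ this step is immediate.
\end{itemize}
Together with $\int_{Q_T}V_\epsilon^2\,dz\le C$, which again follows from \eqref{eq:int-2-parab}, this gives a uniform bound for $V_\epsilon$ in $L^2(0,T;W^{1,2}(\Omega))$; the same argument applies with $b,q$ in place of $a,p$.

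For (ii) I would treat $\vec{G}_\epsilon:=w_\epsilon^{\frac r4}\mathcal{F}_\epsilon(z,\nabla u_\epsilon)\nabla u_\epsilon$ in the same way. Its derivative $D_j(\vec G_\epsilon)_i$ consists of a Hessian part bounded by $C w_\epsilon^{\frac r4}\mathcal{F}_\epsilon|(u_\epsilon)_{xx}|$, with the factor $r/2$ coming from differentiating $w_\epsilon^{r/4}$, together with the same logarithmic and $\nabla a,\nabla b$ lower-order terms as in $D_j\alpha_i$. These lower-order terms were already estimated in the treatment of $\mathcal{J}_{ij}$ and $\mathcal{L}_{ij}$, so the uniform bound follows.

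The passage to the limit is the step needing care. The bounded families $V_\epsilon$ and $\vec G_\epsilon$ have weakly convergent subsequences in $L^2(0,T;W^{1,2}(\Omega))$, so it remains to identify the limits. Because $\nabla u_\epsilon\to\nabla u$ a.e. and $a_\epsilon\to a$, $w_\epsilon\to|\nabla u|^2$, we have $V_\epsilon\to a|\nabla u|^{p-1+\frac r2}$ a.e. These functions are uniformly bounded in $L^{1+\delta}$ by the higher integrability $\int_{Q_T}w_\epsilon^{\frac{2(\underline s-1)+r+s}{2}}\,dz\le C$ from \eqref{eq:est-main-parab}, so Vitali's theorem gives strong convergence in $L^1$ and identifies the weak limit. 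The same argument identifies the limit of $\vec G_\epsilon$ as $|\nabla u|^{\frac r2}\mathcal{F}(z,\nabla u)\nabla u$, since the $\epsilon$-terms in $a_\epsilon,b_\epsilon$ vanish in the limit. Weak lower semicontinuity of the norm then bounds the limit by the \textbf{data}.

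The main obstacle I expect is controlling the $\nabla a,\nabla b$ terms uniformly. This is where $d$ must satisfy \eqref{eq:reg-data}, and the exponent bookkeeping has to be checked against \eqref{eq:d-2}.
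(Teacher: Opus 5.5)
Your proposal is correct and follows essentially the same route as the paper. Both arguments derive uniform $L^2(0,T;W^{1,2}(\Omega))$ bounds for $V_\epsilon$ and $w_\epsilon^{r/4}\mathcal{F}_\epsilon(z,\nabla u_\epsilon)\nabla u_\epsilon$ from the Hessian bound in \eqref{eq:est-main-parab}, Corollary \ref{cor:higher-int-parab}, and Young's inequality with exponents $\frac d2,\frac{d}{d-2}$ under \eqref{eq:d-2}, then use weak compactness and identify the limit through the a.e. convergence of $\nabla u_\epsilon$. The differences are cosmetic: the paper bounds $|\nabla(w_\epsilon^\gamma)|^2$ via $w_\epsilon|u_{xx}|^2\geq\frac14|\nabla w_\epsilon|^2$ instead of differentiating $V_\epsilon$ directly, your Vitali argument makes the identification of the limit more explicit, and the coefficient of the logarithmic term in $D_iV_\epsilon$ should be $\frac12$ rather than $\frac14$, which does not matter.
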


\begin{proof} By the Cauchy-Schwarz inequality

\begin{equation}
\label{eq:F-1}
\begin{split}
w_\epsilon|u_{xx}|^2 & \geq |\nabla u|^2|u_{xx}|^2 =\left(\sum_{k=1}^N\left(D_ku\right)^2\right)\left(\sum_{i,j=1}^N\left(D^2_{ij}u\right)^2\right)
\geq \sum_{i=1}^N\left(\sum_{j=1}^N D^2_{ij}uD_ju\right)^2
\\
& =\frac{1}{4}\sum_{i=1}^N\left(D_i\left(\epsilon^2+\sum_{j=1}^N \left(D_j u\right)^2\right)\right)^2 \equiv \frac{1}{4}|\nabla w_\epsilon|^2.
\end{split}
\end{equation}
By the straightforward computation

\begin{equation}
\label{eq:second-order-new}
\begin{split}
\left|\nabla \left(w_\epsilon^{\gamma}\right)\right|^2 & = (\gamma w_\epsilon^{\gamma-1}\nabla w_\epsilon + w_\epsilon^\gamma \ln w_\epsilon \nabla \gamma,\gamma w_\epsilon^{\gamma-1}\nabla w_\epsilon+ w_\epsilon^\gamma \ln w_\epsilon \nabla \gamma)
\\
& \leq 2\gamma^2w_\epsilon^{2(\gamma-1)}|\nabla w_\epsilon|^2 +2w_{\epsilon}^{2\gamma}\ln^2w_{\epsilon}|\nabla \gamma|^2.
\end{split}
\end{equation}
Setting

\[
2\gamma-1=\dfrac{r}{2}+\dfrac{2(p-2)}{2}\qquad \Leftrightarrow \qquad \gamma=\dfrac{2(p-1)+r}{4}
\]
and using \eqref{eq:elem-ln}, \eqref{eq:F-1}, \eqref{eq:second-order-new}, \eqref{eq:est-main-parab} we find that

\[
\begin{split}
\left|\nabla \left(w_\epsilon^{\frac{2(p-1)+r}{4}}\right)\right|^2 & \leq C_1w_{\epsilon}^{\frac{2(p-2)+r}{2}}|u_{xx}|^2 +C_2w_{\epsilon}^{\frac{2(p-1)+r}{2}}(1+w_{\epsilon}^\theta)
\end{split}
\]
with any $\theta>0$ and constants $C_1$, $C_2$ depending on $\textbf{data}$ and $\theta$, but independent of $\epsilon$. Take $\theta\in (0,r^\sharp)$, and integrate the previous inequality over $Q_T$. By Corollary \ref{cor:higher-int-parab} and Theorem \ref{th:main-estimate}

\[
\int_{Q_T}a_\epsilon^2 \left|\nabla \left(w_\epsilon^{\frac{2(p-1)+r}{4}}\right)\right|^2\,dz\leq C,\qquad C=C(\textbf{data}).
\]
Repeating these arguments with $p$, $a_\epsilon$ substituted by $q$, $b_\epsilon$ and gathering the results we arrive at the inequality

\begin{equation}
\label{eq:grad-flux-eps}
\int_{Q_T}\left(a_\epsilon^2\left|\nabla\left( w_\epsilon^{\frac{2(p-1)+r}{4}}\right)\right|^2 +b_\epsilon^2\left|\nabla\left(w_\epsilon^{\frac{2(q-1)+r}{4}}\right)\right|^2\right)\,dz\leq C,\quad C=C(\textbf{data}).
\end{equation}
Accept the notation
\[
V_\epsilon=a_\epsilon w_\epsilon^{\frac{2(p-1)+r}{4}},\qquad W_\epsilon=b_\epsilon w_\epsilon^{\frac{2(q-1)+r}{4}}.
\]
By \eqref{eq:grad-flux-eps} and \eqref{eq:est-d}, for every $i=\overline{1,N}$

\[
\begin{split}
\int_{Q_T}|D_iV_\epsilon|^2\,dz & \leq C(\textbf{data}) +C'\int_{Q_T}|\nabla a|^{2}w_{\epsilon}^{\frac{2(p-1)+r}{2}}\,dz
\\
& \leq C(\textbf{data})+C'\|\nabla a\|_{d,Q_T}^d +C'\int_{Q_T}w_\epsilon^{\frac{d}{d-2}\frac{2(p-1)+r}{2}}\,dz\leq C''(\textbf{data}).
\end{split}
\]
The same estimate is true for $D_{i}W_{\epsilon}$. It follows that there is a sequence $\{\epsilon_k\}$, $\epsilon_k\to 0^+$, and $\eta_i,\zeta_i\in L^2(Q_T)$ such that

\[
D_iV_{\epsilon_k}\rightharpoonup \eta_i, \quad D_iW_{\epsilon_k}\rightharpoonup \zeta_i\quad \text{as $\epsilon_k\to 0$.}
\]
Take and arbitrary $\phi\in C_0^\infty(Q_T)$. On the one hand,

\[
\left\langle D_iV_{\epsilon_k},\phi\right\rangle_{2,Q_T}\to \langle \eta_i,\phi\rangle_{2,Q_T},\qquad \left\langle D_iW_{\epsilon_k},\phi\right\rangle_{2,Q_T}\to \langle \zeta_i,\phi\rangle_{2,Q_T}\quad \text{as $\epsilon_k\to 0$},
\]
on the other hand,

\[
\left\langle D_iV_{\epsilon_k},\phi\right\rangle_{2,Q_T}=- \left\langle V_{\epsilon_k},D_i \phi\right\rangle_{2,Q_T} \to -\langle a|\nabla u|^{p-1+\frac{r}{2}},D_i \phi\rangle_{2,Q_T}
\]
because of the a.e. convergence $w_\epsilon\to |\nabla u|^2$, the uniform convergence $a_\epsilon\to a$, and the uniform boundedness of $D_iV_{\epsilon_k}$, $D_iW_{\epsilon_k}$ in $L^2(Q_T)$. The inclusion \eqref{eq:reg-smooth-data} (i) is proven.

The inclusion \eqref{eq:reg-smooth-data} (ii) follows by similar arguments. For the solution $u_\epsilon$ of problem \eqref{eq:main-reg}, every $i,j=\overline{1,N}$, $\beta\in C^{1}(Q_T)$, and $\theta\in (0,r^\sharp)$

\[
\begin{split}
& \left(D_j\left(a_\epsilon w_{\epsilon}^{\beta}D_iu_{\epsilon}\right)\right)^2
\\
& \qquad = \left(2a_\epsilon \beta w_{\epsilon}^{\beta-1}D_iu_{\epsilon}\sum_{k=1}^N D_k u_{\epsilon} D^2_{kj}u_{\epsilon} + a_\epsilon w_\epsilon^\beta D^2_{ij}u_{\epsilon} + a_\epsilon w_\epsilon^\beta D_iu_{\epsilon} \ln w_\epsilon D_j\beta +w_\epsilon^\beta D_iu_{\epsilon}D_ja\right)^2
\\
&  \qquad \leq C\left(a_\epsilon^2 w_\epsilon^{2\beta} |(u_{\epsilon})_{xx}|^2 + a_\epsilon^2 w_\epsilon^{2\beta+1+\theta}+ |D_ja|^2w_{\epsilon}^{2\beta+1} +1 \right)
\\
&  \qquad \leq C\left(a_\epsilon^2 w_\epsilon^{2\beta} |(u_{\epsilon})_{xx}|^2 + a_\epsilon^2 w_\epsilon^{2\beta+1+\theta}+ |D_ja|^d+w_{\epsilon}^{\frac{d}{d-2}(2\beta+1)} +1 \right)
\end{split}
\]
Take $2\beta=\frac{2(p-2)+r}{2}$. As in the derivation of \eqref{eq:reg-smooth-data} (i) we find that

\[
\left(D_j\left(a_\epsilon w_{\epsilon}^{\frac{2(p-2)+r}{4}}D_iu_\epsilon\right)\right)^2\leq C\left(a_{\epsilon}^2w_\epsilon^{\frac{2(p-2)+r}{2}}|(u_{\epsilon})_{xx}|^2 + w_{\epsilon}^{\frac{2(p-1)+r+\theta}{2}}+1\right)\leq C',\quad C'=C'(\textbf{data}).
\]
Repeating this estimate with $q$, $b$ instead of $p$, $a$, gathering the results and integrating over $Q_T$ we obtain the uniform estimate

\[
\left\|w_{\epsilon}^{\frac{r}{4}}\mathcal{F}_{\epsilon}(z,\nabla u_\epsilon)\nabla u_\epsilon\right\|_{L^2(0,T;W^{1,2}(\Omega))}\leq C(\textbf{data}),
\]
which yields the existence of $\eta_{ij}\in L^2(0,T;W^{1,2}(\Omega))$ such that

\[
D_j\left(w_{\epsilon}^{\frac{r}{4}}\mathcal{F}_{\epsilon}(z,\nabla u_\epsilon)D_ju_\epsilon\right)\rightharpoonup \eta_{ij}\quad \text{in $L^2(0,T;W^{1,2}(\Omega))$}.
\]
To identify $\eta_{ij}$ we use the a.e. convergence $w_\epsilon\to |\nabla u|^2$.
\end{proof}

\section{Degenerate problem with nonsmooth data. Proofs of the main results}
\subsection{Approximation of the data}
Consider problem \eqref{eq:main} with the data chosen in a special way. Assume that $p,q$, $a,b$ satisfy \eqref{eq:coeff}, \eqref{eq:balance-p-q}, \eqref{eq:reg-data}, $f\in L^\sigma(Q_T)$ with $\sigma> 2$, and $u_0\in \mathcal{W}_r(\Omega)$.
By \cite[Subsec.~3.2]{Arora-Shmarev-JGA-2026} there exist approximation sequences $\{f_m\}$, $\{p_m\}$, $\{q_m\}$ with the following properties:

\begin{equation}
\label{eq:approximation-1}
\begin{split}
& f_m\in C_0^\infty(Q_T),\qquad \text{$f_m\to f$ in $L^\sigma(Q_T)$},
\\
& p_m(z),\,q_m(z)\in C^\infty(Q_T),\qquad \text{$p_m\nearrow p$, $q_m\nearrow q$ in $C^{0,1}(Q_T)$}.
\end{split}
\end{equation}
To obtain the approximating sequences $\{a_m\}$, $\{b_m\}$ we extend the coefficients $a$, $b$ outside $Q_T$, introduce the functions

\[
\widetilde a_m(z)=\left(a(z)-\frac{\alpha}{2^{m+1}}\right)_+, \qquad \widetilde b_m(z)=\left(b(z)-\frac{\alpha}{2^{m+1}}\right)_+,\qquad (\,\cdot\,)_+=\max\{0,\cdot\},
\]
and mollify them with the standard kernel: $a_m=\rho_{1/m}\star \widetilde a_m$, $b_m=\rho_{1/m}\star \widetilde b_m$. The sequences $\{a_m\}$, $\{b_m\}$ have the following properties:

\begin{equation}
\label{eq:approximation-2}
\begin{split}
&
a_m,b_m\in C^{\infty}(Q_T),\qquad \text{$a_m\nearrow a$, $b_m\nearrow b$ uniformly in $\overline Q_T$},
\\
& \text{$a_m\leq a$, $b_m\leq b$ in $Q_T$},\qquad \dfrac{\alpha}{2}\leq a_m(z)+b_m(z)\quad \text{in $Q_T$},
\\
& \text{$\|\nabla a_m\|_{d,Q_T}+\|\nabla b_m\|_{L^d(Q_T)}+\|a_{m t}\|_{d,Q_T}+\|b_{m t}\|_{d,Q_T}\leq M$ uniformly in $m$}.
\end{split}
\end{equation}
By the definition of the space $\mathcal{W}_0(\Omega)$ there is a sequence 

\begin{equation}
\label{eq:initial-approx}
u_{0m}\in C_0^\infty(\Omega), \quad \text{$u_{0m}\to u_0$ in $\mathcal{W}_0(\Omega)$}\quad \Leftrightarrow \quad  \int_\Omega \mathcal{F}((x,0),\nabla(u_{0m}-u_0))|\nabla (u_{0 m}-u_0)|^2\,dx\to 0.
\end{equation}
By \cite[Lemma 3.1]{RACSAM-2023}, \eqref{eq:initial-approx} implies

\[
\int_{\Omega}|\nabla (u_{0 m}-u_0)|^{\underline{s}(x,0)}\,dx\to 0\quad \text{as $m\to \infty$},
\]
whence $|\nabla (u_{0 m}-u_0)|\rightharpoonup 0$ in $L^1(\Omega)$.
Introduce the functions
\begin{equation}
\label{eq:flux-m}
\mathcal{F}^{(m)}(z,\nabla u_m)=a_m|\nabla u_m|^{p_m-2}+b_m|\nabla u_m|^{q_m-2}.
\end{equation}
By \eqref{eq:approximation-1}, \eqref{eq:approximation-2} and Young's inequality

\begin{equation}
\label{eq:approximation-3}
\int_{\Omega}\mathcal{F}^{(m)}((x,0),\nabla u_{0 m})|\nabla u_{0m}|^2\,dx\leq
 C+\int_\Omega \mathcal{F}((x,0),\nabla u_{0 m})|\nabla u_{0 m}|^2\,dx\leq M
\end{equation}
with an independent of $m$ constant $M$.

\begin{proposition}[Lemma 3.1, \cite{Chipot-Oliveira-2019}]
\label{pro:Chipot-Gildo-2019}
Assume that the sequences $\{v_m\}$, $\{p_m\}$ satisfy the conditions

\[
\begin{split}
& 1<p^-\leq p_m(x)\leq p^+<\infty\quad \text{for a.e. $x\in \Omega$, $p^\pm=const.$},
\\
& \text{$p_m\to p$ a.e. in $\Omega$}, \quad \text{$v_m\rightharpoonup v$ in $L^1(\Omega)$},
\quad\displaystyle \int_{\Omega}|v_m|^{p_m}\,dx\leq M.
\end{split}
\]
Then $v\in L^{p(\cdot)}(\Omega)$ and

\[
\int_\Omega |v|^{p(x)}\,dx\leq \liminf_{m\to \infty} \int_\Omega | v_m|^{p_m(x)}\,dx \leq M.
\]
\end{proposition}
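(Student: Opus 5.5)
The idea is to reduce the statement to the case $p=p(x)$ constant in $m$, which is the classical lower semicontinuity of convex integrals. Then account for the variation of the exponents through a truncation argument.

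\emph{Step 1 (localization to bounded values).} Fix $k>0$ and write
\[
\Omega_k=\{x:\ |v(x)|\le k\}.
\]
Since the functional $w\mapsto\int_E |w|^{\gamma(x)}dx$ is convex and strongly lower semicontinuous on $L^1(E)$, it is weakly lower semicontinuous by Mazur's lemma. We want to compare $|v_m|^{p_m}$ with $|v_m|^{p}$. For this we use the pointwise Young-type inequality: for every $\delta\in(0,1)$ there is $C_\delta$ such that
\[
|s|^{p(x)}\le (1+\delta)|s|^{p_m(x)}+C_\delta\bigl(|p_m(x)-p(x)|\,\ln(e+|s|)\bigr)\text{-type error}.
\]
This is awkward for large $|s|$. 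A cleaner route avoids large values altogether. By Egorov's theorem, for every $\eta>0$ there is $E_\eta\subset\Omega$ with $|\Omega\setminus E_\eta|<\eta$ on which $p_m\to p$ uniformly.

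\emph{Step 2 (fixed exponent on $E_\eta$).} On $E_\eta$, for $m$ large, $p_m\ge p-\tau$ with a small constant $\tau>0$ and $p-\tau>1$. Hence
\[
|v_m|^{p-\tau}\le 1+|v_m|^{p_m}.
\]
This gives
\[
\int_{E_\eta}|v_m|^{p(x)-\tau}dx\le |\Omega|+M.
\]
Weak lower semicontinuity of the convex functional with the fixed exponent $p(\cdot)-\tau$ on $E_\eta$ (note $v_m\rightharpoonup v$ in $L^1(E_\eta)$) yields
\[
\int_{E_\eta}|v|^{p-\tau}dx\le\liminf_m\int_{E_\eta}|v_m|^{p-\tau}dx.
\]
To obtain the sharp constant, replace $1+|v_m|^{p_m}$ by the finer bound $|s|^{p-\tau}\le |s|^{p_m}+|s|^{p_m}\mathbf 1_{\{|s|<1\}}$. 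More precisely, $|s|^{p-\tau}\le |s|^{p_m}$ for $|s|\ge1$, and for $|s|<1$ we have $\bigl||s|^{p-\tau}-|s|^{p_m}\bigr|\le \omega(\tau+\|p_m-p\|_{L^\infty(E_\eta)})$ with $\omega(h)\to0$ as $h\to0$. Since $\sup_{0<s<1}|s^{a}-s^{b}|\to0$ as $|a-b|\to0$ uniformly for $a,b\ge p^->1$, we get
\[
\int_{E_\eta}|v|^{p-\tau}dx\le\liminf_m\int_\Omega|v_m|^{p_m}dx+|\Omega|\,\omega(\tau).
\]

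\emph{Step 3 (limits).} Let $\tau\to0$ by Fatou's lemma, using $|v|^{p-\tau}\to|v|^{p}$ pointwise. Then let $\eta\to0$ by monotone convergence along an increasing choice of the sets $E_\eta$, which can be arranged by taking unions. This gives $\int_\Omega|v|^{p}\le\liminf_m\int_\Omega|v_m|^{p_m}\le M$, and in particular $v\in L^{p(\cdot)}(\Omega)$.

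The main obstacle is Step 2. There we must control the mismatch between the $m$-dependent exponent and a fixed one without losing a multiplicative constant, so that the bound is exactly $\liminf$ and not $C\liminf$. The decisive observation is to split into $|s|\ge1$ and $|s|<1$: the lower exponent $p-\tau$ is dominated by $p_m$ for large values, and for small values the error is uniformly small.
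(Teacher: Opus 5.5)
The paper does not prove this proposition; it quotes it from Chipot--de Oliveira, so there is no in-paper argument to compare against. Your argument is a correct self-contained proof. It runs as follows:
\begin{itemize}
\item Egorov gives sets $E_\eta$ on which $p_m\to p$ uniformly.
\item On $E_\eta$, for large $m$, you have the pointwise bound $|s|^{p-\tau}\le |s|^{p_m}+\omega\bigl(\tau+\sup_{E_\eta}|p_m-p|\bigr)$. For $|s|\ge1$ this uses $p_m\ge p-\tau$; for $|s|<1$ it uses a uniform modulus.
\item The functional $w\mapsto\int_{E_\eta}|w|^{p-\tau}\,dx$ is weakly l.s.c.\ on $L^1(E_\eta)$. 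It is convex because $p\ge p^-$ a.e.\ (as a limit of $p_m\ge p^-$), so $p-\tau>1$ for $\tau<p^--1$. It is strongly l.s.c.\ by Fatou, hence weakly l.s.c.\ by Mazur.
\item Combining these gives $\int_{E_\eta}|v|^{p-\tau}\le\liminf_m\int_\Omega|v_m|^{p_m}+|\Omega|\,\omega(\tau)$.
\item Fatou as $\tau\to0$, then monotone convergence along increasing finite unions of Egorov sets, finishes the proof.
\end{itemize}

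Some points should be cleaned up.

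\textbf{The ``finer bound'' is false.} The displayed inequality $|s|^{p-\tau}\le |s|^{p_m}+|s|^{p_m}\mathbf 1_{\{|s|<1\}}$ says, for $|s|<1$, that $|s|^{p-\tau-p_m}\le 2$. This fails as $s\to0$ whenever $p_m>p-\tau$, which is exactly the situation on $E_\eta$ once $\sup_{E_\eta}|p_m-p|<\tau$. Delete it and keep only the additive estimate stated after ``More precisely''.

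\textbf{The uniformity range is slightly off.} The modulus must be uniform over exponents $a,b\ge p^--\tau$, not $a,b\ge p^-$, since $p-\tau$ can lie below $p^-$. Explicitly, for $0<s<1$ and $a\le b$,
\[
0\le s^a-s^b=s^a\bigl(1-s^{b-a}\bigr)\le (b-a)\,s^a\ln(1/s)\le \frac{b-a}{e\,a}.
\]
So $\omega(h)=h/\bigl(e(p^--\tau)\bigr)$ works. The error term before taking $\liminf_m$ is $|\Omega|\,\omega\bigl(\tau+\sup_{E_\eta}|p_m-p|\bigr)$, whose $\limsup$ in $m$ is $|\Omega|\,\omega(\tau)$.

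\textbf{Unused material.} The set $\Omega_k$, the vague Young-type inequality in Step 1, and the bound $\int_{E_\eta}|v_m|^{p-\tau}\le|\Omega|+M$ play no role and can be removed.

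\textbf{Finite measure.} Egorov's theorem uses $|\Omega|<\infty$, which holds here because $\Omega$ is bounded; this should be stated.
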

It follows from Proposition \ref{pro:Chipot-Gildo-2019} applied to the functions $a^{\frac{1}{p_m}}_m|\nabla (u_{0 m}-u_0)|$, $b^{\frac{1}{q_m}}_m|\nabla (u_{0 m}-u_0)|$ with the exponents $p_m$, $q_m$ that

\begin{equation}
\label{eq:init-m-conv}
\int_\Omega \mathcal{F}^{(m)}((x,0),\nabla (u_{0 m}-u_0))|\nabla (u_{0 m}-u_0)|^2\,dx\to 0.
\end{equation}

\subsection{Existence of a unique strong solution}
Denote by $\{u_m\}$ the sequence of strong solutions to the problem

\begin{equation}
\label{eq:main-non-smooth}
\begin{split}
& u_{m t}-\operatorname{div}\mathcal{F}^{(m)}(z,\nabla u_m)\nabla u_m=f_m\quad \text{in $Q_T$},
\\
& \text{$u_m=0$ on $\partial\Omega\times (0,T)$},\quad \text{$u_m(x,0)=u_{0m}(x)$ in $\Omega$}
\end{split}
\end{equation}
with the flux function $\mathcal{F}^{(m)}$ defined in \eqref{eq:flux-m}.

The data of problem \eqref{eq:main-non-smooth}, $\textbf{data}_m=\{a_m,b_m,p_m,q_m,u_{0m},f_m\}$, are smooth approximations of $\textbf{data}$. By Theorem \ref{th:reg-smooth-exist} the functions $u_m$ satisfy estimate \eqref{eq:est-limit-parab}, which is uniform with respect to $m$. Fix some $s\in (0,r^\sharp)$. By \eqref{eq:unif-2}, \eqref{eq:est-limit-parab} with $r=0$, there exists a function $u$ such that, up to a subsequence,

\begin{equation}
\label{eq:conv-m-1}
\begin{split}
& \text{$u_m\rightharpoonup u$ $\star$-weak in $L^{\infty}(0,T;L^2(\Omega))$},\qquad \text{$u_{m t}\rightharpoonup u_t$ in $L^2(Q_T)$},
\\
& \text{$\nabla u_m\rightharpoonup \nabla u$ in $L^{2(\underline{s}_m(\cdot)-1)+s}(Q_T)\subset L^{\overline{s}(\cdot)}(Q_T)$},\quad \underline{s}_m(z)=\min\{p_m(z),q_m(z)\}.
\end{split}
\end{equation}
By virtue of \eqref{eq:est-limit-parab} with $r=0$ the sequence $\{u_{m t}\}$ is uniformly bounded in $L^2(Q_T)$, while $\{\nabla u_m\}$ is uniformly bounded in $L^\infty(0,T;L^{\underline{s}^-}(\Omega))$ because of \eqref{eq:est-limit-parab} and \eqref{eq:alpha}: for a.e. $t\in (0,T)$

\[
\alpha \int_{\Omega}|\nabla u_m|^{\underline{s}^-}\,dx\leq \int_{\Omega}|\nabla u_m|^2\mathcal{F}^{(m)}(z,\nabla u_m)\,dx +C \leq M
\]
with a constant $M$ not depending on $m$. Since $\underline{s}^->\frac{2(N+1)}{N+2}$, the embedding $W_0^{1,\underline{s}^-}(\Omega)\subset L^2(\Omega)$ is compact. By \cite[Colollary 4]{Simon-1987}

\begin{equation}
\label{eq:rel-comp}
\text{the sequence $\{u_m\}$ is relatively compact in $C([0,T];L^2(\Omega)$}.
\end{equation}
Because of \eqref{eq:balance-p-q}, for all $s$ sufficiently close to $r^\sharp$, and the sufficiently large $m$

\begin{equation}
\label{eq:exp-lev}
\begin{split}
\overline{s}_m(z)-\underline{s}_m(z) & < 
\frac{2}{N+2}=\frac{2(N+1)}{N+2}-2+\frac{4}{N+2}<\underline{s}_m-2+r^\sharp.
\\
& \Rightarrow\qquad
\overline{s}_m(z)<2(\underline{s}_m(z)-1)+s.
\end{split}
\end{equation}
It follows that there exist $\delta>0$ and $m_\delta$ such that for all $m \geq m_\delta$

\begin{equation}
\label{eq:big-m}
\overline{s}(z)= (\overline{s}(z)-\overline{s}_m(z))+\overline{s}_m(z)<\delta+\overline{s}_m(z) < 2(\underline{s}_m(z)-1)+s,
\end{equation}
and there exists $\theta>0$ and $m_0$ such that the sequence $\{\nabla u_m\}_{m\geq m_0}$ is uniformly bounded in $L^{\overline{s}(\cdot)+\theta}(Q_T)^N$.
Since $\overline{s}(z)=\max\{p(z),q(z)\}$,
there exist $\eta\in L^{p'(\cdot)}(Q_T)^N$, $\zeta\in L^{q'(\cdot)}(Q_T)^N$ such that, up to a subsequence,

\[
\text{$a|\nabla u_m|^{p-2}\nabla u_m\rightharpoonup \eta$ in  $L^{p'(\cdot)}(Q_T)^N$},\qquad \text{$b|\nabla u_m|^{q-2}\nabla u_m\rightharpoonup \zeta$ in $L^{q'(\cdot)}(Q_T)^N$}.
\]
By construction, for every $m$ the solution of problem \eqref{eq:main-non-smooth} satisfies the identity

\begin{equation}
\label{eq:ident-1}
\int_{Q_T}\left(u_{m t}\phi +\mathcal{F}^{(m)}(z,\nabla u_m)\nabla u_m\cdot \nabla \phi -f_m\phi\right)\,dz=0
\end{equation}
with any test function $\phi\in \mathbb{W}_{\overline{s}(\cdot)}(Q_T)$. We rewrite \eqref{eq:ident-1} in the form

\begin{equation}
\label{eq:identity-prim}
\int_{Q_T}\left(u_{m t}\phi+\mathcal{F}(z,\nabla u_m)\cdot \nabla \phi-f_m\phi\right)\,dz=\int_{Q_T}\left(\mathcal{F}(z,\nabla u_m) -\mathcal{F}^{(m)}(z,\nabla u_m)\right)\nabla u_m\cdot \nabla \phi\,dz\equiv \mathcal{K}_m.
\end{equation}
Passing in \eqref{eq:identity-prim} to the limit as $m\to \infty$ we obtain

\begin{equation}
\label{eq:ident-2}
\int_{Q_T}\left(u_{t}\phi +(\eta+\zeta)\cdot \nabla \phi -f\phi\right)\,dz=\lim_{m\to \infty}\mathcal{K}_m.
\end{equation}
The proof of existence will be completed if we identify the limits $\eta$ and $\zeta$, and prove that $|\mathcal{K}_m|\to 0$ as $m\to \infty$.

\begin{lemma}
\label{le:strong-grad-1}
The sequence $\{\nabla u_m\}$ converges to $\nabla u$ strongly in $L^{\underline{s}(\cdot)}(Q_T)$ and a.e. in $Q_T$.
\end{lemma}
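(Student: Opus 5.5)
We follow the argument used for the $\epsilon$-limit in Section 6 (relation \eqref{eq:conv-2} and its consequence \eqref{eq:grad-a-e-1}), now comparing two approximate problems with different data. Take $m>k$ and subtract the identities \eqref{eq:ident-1} for $u_m$ and $u_k$. Use the test function $v=u_m-u_k$, which is admissible because $\{\nabla u_m\}$ is uniformly bounded in $L^{\overline{s}(\cdot)+\theta}(Q_T)$ by \eqref{eq:big-m}. Write each flux as the fixed limit flux plus an error:
\[
\mathcal{F}^{(m)}(z,\nabla u_m)\nabla u_m=\mathcal{F}(z,\nabla u_m)\nabla u_m+\bigl(\mathcal{F}^{(m)}-\mathcal{F}\bigr)(z,\nabla u_m)\nabla u_m .
\]
This gives
\[
\tfrac12\|v(T)\|_{2,\Omega}^2+\int_{Q_T}\bigl(\mathcal{F}(z,\nabla u_m)\nabla u_m-\mathcal{F}(z,\nabla u_k)\nabla u_k\bigr)\cdot\nabla v\,dz
=\tfrac12\|u_{0m}-u_{0k}\|_{2,\Omega}^2+\int_{Q_T}(f_m-f_k)v\,dz+\mathcal{E}_{m}-\mathcal{E}_k ,
\]
where
\[
\mathcal{E}_j=\int_{Q_T}\bigl(\mathcal{F}-\mathcal{F}^{(j)}\bigr)(z,\nabla u_j)\nabla u_j\cdot\nabla v\,dz .
\]
We then show that every term on the right-hand side tends to zero as $m,k\to\infty$.

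The initial-data term tends to zero because $u_{0m}\to u_0$ in $W^{1,\underline{s}(\cdot,0)}_0\subset L^2$. The source term tends to zero because $f_m\to f$ in $L^\sigma$ and $\{v\}$ is bounded in $L^\infty(0,T;L^2)$. Alternatively, \eqref{eq:rel-comp} gives $v\to0$ in $C([0,T];L^2)$ along the chosen subsequence.

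The main work is $\mathcal{E}_j$. We split
\[
\bigl(\mathcal{F}-\mathcal{F}^{(j)}\bigr)|\xi|=(a-a_j)|\xi|^{p-1}+a_j\bigl(|\xi|^{p-1}-|\xi|^{p_j-1}\bigr)+(\text{the same with }b,q).
\]
The first part is bounded by $\|a-a_j\|_\infty\,(1+|\xi|^{\overline{s}-1})$, and $a_j\to a$ uniformly by \eqref{eq:approximation-2}. For the second part we use the mean value theorem in the exponent, $\bigl||\xi|^{p-1}-|\xi|^{p_j-1}\bigr|\le |p-p_j|\,|\ln|\xi||\,(|\xi|^{p-1}+|\xi|^{p_j-1})$, together with \eqref{eq:elem-ln}; this gives the bound $C\|p-p_j\|_\infty(1+|\xi|^{\overline{s}-1+\gamma})$ for a small $\gamma>0$. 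Hölder's inequality and the uniform bound on $\nabla u_j$ in $L^{\overline{s}(\cdot)+\theta}(Q_T)$, with $\gamma<\theta$ chosen so that $(\overline{s}-1+\gamma)(\overline{s}+\theta)'\le\overline{s}+\theta$, then yield $|\mathcal{E}_j|\le C(\|a-a_j\|_\infty+\|b-b_j\|_\infty+\|p-p_j\|_\infty+\|q-q_j\|_\infty)\to0$. We expect this to be the main obstacle, because the logarithmic factor and the variable exponents $p_j$ must be absorbed into the fixed higher-integrability margin. The choice $m\ge m_0$ in \eqref{eq:big-m} provides exactly that margin.

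It follows that
\[
\int_{Q_T}\bigl(\mathcal{F}(z,\nabla u_m)\nabla u_m-\mathcal{F}(z,\nabla u_k)\nabla u_k\bigr)\cdot\nabla(u_m-u_k)\,dz\to0 .
\]
The flux $\mathcal{F}$ satisfies the standard monotonicity bounds for $|\xi|^{s-2}\xi$: for $s\ge2$ the pairing dominates $|\xi-\eta|^{s}$, and for $s<2$ it dominates $|\xi-\eta|^2(|\xi|+|\eta|)^{s-2}$. Combining these with $a+b\ge\alpha$ and the uniform $L^{\underline{s}(\cdot)}$ bounds, exactly as in \cite[Proposition 3.2, Lemma 3.1]{Ar-Shm-RACSAM-2023} used for \eqref{eq:grad-a-e-1}, we obtain $\int_{Q_T}|\nabla(u_m-u_k)|^{\underline{s}(z)}dz\to0$. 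In the subquadratic case this step uses Hölder's inequality with the bounded factor $(|\nabla u_m|+|\nabla u_k|)^{\underline{s}}$. Hence $\{\nabla u_m\}$ is Cauchy in $L^{\underline{s}(\cdot)}(Q_T)$, and its limit equals $\nabla u$ by \eqref{eq:conv-m-1}. A further subsequence converges a.e. in $Q_T$.
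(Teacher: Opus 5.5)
Your proposal is correct and follows the paper's proof essentially step for step. Both arguments test the difference of the identities \eqref{eq:ident-1} with $u_m-u_k$. Both split the flux error into a coefficient part and an exponent part, control the exponent part by the mean value theorem, \eqref{eq:elem-ln} and the integrability margin from \eqref{eq:big-m}, and conclude with the monotonicity results of \cite{Ar-Shm-RACSAM-2023}. The only differences are cosmetic: you pair $a-a_j$ with $|\xi|^{p-1}$ rather than $|\xi|^{p_m-1}$, and you handle the source term with the $L^2$ bound on $v$ instead of the compactness \eqref{eq:rel-comp}.
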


\begin{proof}
Fix $m,n\in \mathbb{N}$, take $u_m-u_n$ for the test function, and combine identities \eqref{eq:ident-1} for $u_m$, $u_n$:

\begin{equation}
\label{eq:aux-RACSAM}
\begin{split}
\frac{1}{2}\|u_m-u_n\|^2_{2,\Omega}(T) & +\int_{Q_T}  \left(\mathcal{F}(z,\nabla u_m)\nabla u_m- \mathcal{F}(z,\nabla u_n)\nabla u_n\right)\nabla(u_m-u_n)\,dz
\\
& =\int_{Q_T} (f_m-f_n) (u_m-u_n)\,dz +\frac{1}{2}\|u_{0m}-u_{0n}\|^2_{2,\Omega}
\\
& \qquad + \int_{Q_T} \left(\mathcal{F}(z,\nabla u_m)\nabla u_m- \mathcal{F}^{(m)}(z,\nabla u_m)\nabla u_m\right)\nabla(u_m-u_n)\,dz
\\
& \qquad + \int_{Q_T} \left(\mathcal{F}^{(n)}(z,\nabla u_n)\nabla u_n- \mathcal{F}(z,\nabla u_n)\nabla u_n\right)\nabla(u_m-u_n)\,dz
\equiv \sum_{i=1}^4 \mathcal{I}_i.
\end{split}
\end{equation}
By \eqref{eq:rel-comp} and the choice of the sequence $\{f_m\}$ $\mathcal{I}_1\to 0$ as $m,n\to \infty$, while $\mathcal{I}_2\to 0$ by the choice of the sequence $\{u_{0m}\}$. The integrals $\mathcal{I}_{3,4}$ are estimated similarly, therefore  we provide the detailed estimating only for the first term of $\mathcal{I}_3$. By the mean value theorem

\[
\begin{split}
(a|\nabla u_m|^{p-2} & -a_m|\nabla u_m|^{p_m-2})|\nabla u_m|= a\left(|\nabla u_m|^{p-1}- |\nabla u_m|^{p_m-1}\right) + (a-a_m)|\nabla u_m|^{p_m-1}
\\
& = a(p-p_m)|\nabla u_m|^{\theta p+(1-\theta)p_m-1}\ln |\nabla u_m| + (a-a_m)|\nabla u_m|^{p_m-1}
\end{split}
\]
with some $\theta\in (0,1)$. Since $p_m\leq p$ in $Q_T$, the strict inequality \eqref{eq:big-m} allows one to choose $\delta$ so small, and $s$ so close to $r^\sharp$, that

\[
\theta p+(1-\theta)p_m-1+\frac{\delta}{2}=p_m+\theta(p-p_m)-1+\frac{\delta}{2} \leq p-1 +\delta \leq \overline{s}-1+\delta \leq 2(\underline{s}-1)+ s-1.
\]

By \eqref{eq:elem-ln} and Young's inequality

\[
|\nabla u_m|^{\theta p+(1-\theta)p_m-1}|\ln |\nabla u_m||\leq C+ |\nabla u_m|^{\overline{s}-1+\delta}.
\]
By \eqref{eq:est-limit-parab}

\[
\begin{split}
\int_{Q_T} & \left|a|\nabla u_m|^{p-2} -a_m|\nabla u_m|^{p_m-2})|\nabla u_m|\right||\nabla(u_m-u_n)|\,dz \\
&
\leq a^+\sup_{Q_T}|p_m-p|\int_{Q_T}\left(C+|\nabla u_m|^{\overline{s}-1+\delta}\right)|\nabla (u_m-u_n)|\,dz
\\
& \qquad + \sup_{Q_T}|a-a_m|\int_{Q_T}|\nabla u_m|^{p_m-1}|\nabla (u_m-u_n)|\,dz\equiv \mathcal{J}_1 +\mathcal{J}_2.
\end{split}
\]
Using \eqref{eq:est-limit-parab} and \eqref{eq:big-m} with a new $\delta>0$, chosen as small as is needed, we may estimate

\[
\begin{split}
\mathcal{J}_1 & \leq a^+\sup_{Q_T}|p-p_m|\int_{Q_T}\left(C+|\nabla (u_m-u_n)|^{\overline{s}} + |\nabla u_m|^{\overline{s}+\delta \overline{s}'}\right)\,dz\leq C \sup_{Q_T}|p-p_m|,
\\
\mathcal{J}_2 & \leq \sup_{Q_T}|a-a_m|\left(\int_{Q_T}|\nabla (u_m-u_n)|^{\overline{s}}\,dz + \int_{Q_T}|\nabla u_m|^{\frac{p_m-1}{\overline{s}-1}\overline{s}}\,dz\right)\leq C \sup_{Q_T}|a-a_m|
\end{split}
\]
with an independent of $m,n$ constant $C$, whence $\mathcal{J}_1, \mathcal{J}_2\to 0$ as $m,n\to \infty$. Dropping the first term on the left-hand side of \eqref{eq:aux-RACSAM} we find that

\[
\int_{Q_T}  \left(\mathcal{F}(z,\nabla u_m)\nabla u_m- \mathcal{F}(z,\nabla u_n)\nabla u_n\right)\nabla(u_m-u_n)\,dz \to 0\quad \text{as $m,n\to \infty$}.
\]
By \cite[Proposition~3.2, Lemma~3.1]{RACSAM-2023} this relation yields
\[
\int_{Q_T}|\nabla (u_m-u_n)|^{\underline{s}(z)}\,dz\to 0 \qquad  \text{as $m,n\to \infty$},
\]
and the assertion follows from \cite[Lemma 3.2]{RACSAM-2023}.
\end{proof}

The integrals $\mathcal{K}_m$ on the right-hand side of \eqref{eq:identity-prim} are estimated exactly as $\mathcal{I}_{3,4}$ in \eqref{eq:aux-RACSAM}: $|\mathcal{K}_m|\to 0$ as $m\to \infty$. Passing in \eqref{eq:identity-prim} to the limit as $m\to \infty$ we arrive at identity \eqref{eq:ident-2} with the right-hand side zero. The sequence $\{\nabla u_m\}$ is uniformly bounded in $L^{\overline{s}(\cdot)+\theta}(Q_T)$, and by Lemma \ref{le:strong-grad-1} $\nabla u_m\to \nabla u$ a.e. in $Q_T$. Applying the Vitali convergence theorem to both terms of $\mathcal{F}(z,\nabla u_m)\nabla u_m$ we conclude that $\eta=a|\nabla u|^{p-2}\nabla u$, $\zeta=b|\nabla u|^{q-2}\nabla u$.

\subsection{Higher integrability and second-order regularity}
Let $0<r<\sigma-2$. Inequality \eqref{eq:est-limit-parab} allows one to choose a subsequence of $\{u_m\}$ in such a way that, in addition to \eqref{eq:conv-m-1},

\begin{equation}
\label{eq:conv-m-2}
\text{$\nabla u_m\rightharpoonup \nabla u$ in $L^{\overline{s}(\cdot)+r+s}(Q_T)$ with $s\in (0,r^\sharp)$}.
\end{equation}
The constant $M$ in \eqref{eq:est-limit-parab} depends on the structural constants, $\|u_{0m}\|_{2,\Omega}$, and

\[
\int_{\Omega}|\nabla u_{0m}|^{r+2} \mathcal{F}^{(m)}((x,0),\nabla u_{0m})\,dx+\|f_m\|_{\sigma,Q_T}^\sigma.
\]
Because of the choice of the sequences $\{f_m\}$, $\{u_{0m}\}$ and due to \eqref{eq:init-m-conv}, \eqref{eq:big-m}, inequality \eqref{eq:est-limit-parab} can be continued as follows: given $s\in (0,r^\sharp)$, for all sufficiently large $m$

\begin{equation}
\label{eq:est-m}
\begin{split}
\operatorname{ess}\sup_{(0,T)} & \|u_m(t)\|_{2,\Omega}^2 + \|u_{m t}\|_{2,Q_T}^2+\frac{1}{r+\overline{s}_m^+}\operatorname{ess}\sup_{(0,T)} \int_{\Omega}|\nabla u_m|^{r+2} \mathcal{F}^{(m)}(z,\nabla u_m)\,dx
\\
&
+
\int_{Q_T}|\nabla u_m|^{\overline{s}(z)+r+s}\,dz
\leq C+C'\int_{\Omega}|\nabla u_{0}|^{r+2} \mathcal{F}((x,0),\nabla u_{0})\,dx+\|f\|_{\sigma,Q_T}^\sigma:=N
\end{split}
\end{equation}
with independent of $m$ constants $C$, $C'$. The first, second and fourth terms on the left-hand side of \eqref{eq:est-m} have limits as $m\to \infty$, while the right-hand side does not depend on $m$ .

\begin{lemma}
\label{le:flux-conv}
For a.e. $t\in (0,T)$

\begin{equation}
\label{eq:conv-mod-degenerate}
|\nabla u_m|^{r+2}\mathcal{F}^{(m)}(z,\nabla u_m)\to |\nabla u|^{r+2}\mathcal{F}(z,\nabla u)\quad \text{a.e. in $\Omega$}
\end{equation}
and

\begin{equation}
\label{eq:bound-1}
\operatorname{ess}\sup_{(0,T)}\int_{\Omega}|\nabla u|^{r+2}\mathcal{F}(z,\nabla u)\,dx\leq N
\end{equation}
with the constant $N$ from \eqref{eq:est-m}.
\end{lemma}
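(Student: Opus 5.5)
The plan is to obtain \eqref{eq:conv-mod-degenerate} from the a.e. convergence of gradients in Lemma \ref{le:strong-grad-1} together with the convergence of the data, and then to deduce \eqref{eq:bound-1} from the uniform bound \eqref{eq:est-m} by Fatou's lemma, applied for a.e. fixed $t$.

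\emph{Step 1: pointwise convergence.} By Lemma \ref{le:strong-grad-1}, $\nabla u_m\to\nabla u$ a.e. in $Q_T$. By Fubini, for a.e. $t\in(0,T)$ this convergence also holds a.e. in $\Omega$. At each such point $z$ we have:
\begin{itemize}
\item[] $a_m(z)\to a(z)$ and $b_m(z)\to b(z)$, by the uniform convergence in \eqref{eq:approximation-2};
\item[] $p_m(z)\to p(z)$ and $q_m(z)\to q(z)$, by \eqref{eq:approximation-1}.
\end{itemize}
The map $(\alpha,\beta,\pi,\kappa,\xi)\mapsto \alpha|\xi|^{\pi+r}+\beta|\xi|^{\kappa+r}$ is continuous on $[0,\infty)^2\times(1,\infty)^2\times\mathbb{R}^N$, including at $\xi=0$, since $\pi+r>1$. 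Hence
\[
|\nabla u_m|^{r+2}\mathcal{F}^{(m)}(z,\nabla u_m)=a_m|\nabla u_m|^{p_m+r}+b_m|\nabla u_m|^{q_m+r}\to a|\nabla u|^{p+r}+b|\nabla u|^{q+r}
\]
a.e. in $\Omega$ for a.e. $t$. This is \eqref{eq:conv-mod-degenerate}.

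\emph{Step 2: the uniform bound.} Estimate \eqref{eq:est-m} holds for all sufficiently large $m$, with $N$ independent of $m$. It controls $\int_\Omega|\nabla u_m(\cdot,t)|^{r+2}\mathcal{F}^{(m)}(z,\nabla u_m)\,dx$ by $N$ for a.e. $t$. Here $N$ is understood as the quantity from \eqref{eq:est-m} that dominates each term on its left-hand side, i.e., with the harmless prefactor already incorporated into $C$ and $C'$.

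To pass from essential suprema in $t$ to a single exceptional null set, I would work with a countable family of $m$. Taking the union over $m$ of the null sets where the bound fails, together with the null set from Step 1, gives one null set $E\subset(0,T)$. For $t\notin E$ the integrands are nonnegative and converge a.e. in $\Omega$, so Fatou's lemma gives
\[
\int_\Omega|\nabla u|^{r+2}\mathcal{F}(z,\nabla u)\,dx\le\liminf_{m\to\infty}\int_\Omega|\nabla u_m|^{r+2}\mathcal{F}^{(m)}(z,\nabla u_m)\,dx\le N.
\]
Taking the essential supremum over $t$ yields \eqref{eq:bound-1}.

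\emph{Main obstacle.} The delicate point is Step 1. The a.e. convergence must hold on almost every time slice rather than merely in $Q_T$, which requires passing to a subsequence and using Fubini's theorem. One must also check the joint continuity of the integrand in the variable exponents at $\xi=0$, which is where the condition $p_m+r,\,q_m+r>1$ is used. Once these are in place, no uniform integrability is needed, because Fatou only requires nonnegativity.
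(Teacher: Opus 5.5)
Your proof is correct, but it reaches \eqref{eq:bound-1} by a different and more elementary route than the paper.

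\textbf{Pointwise convergence.} The paper proves \eqref{eq:conv-mod-degenerate} exactly as you do. It then does not use it for the bound.

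\textbf{How the paper gets the bound.} It introduces $v_m=a_m^{1/(p_m+r)}|\nabla u_m|$ and $w_m=b_m^{1/(q_m+r)}|\nabla u_m|$. From the strong convergence in $L^{\underline{s}^-}$ it derives $v_m(\cdot,t)\rightharpoonup a^{1/(p+r)}|\nabla u|$ and $w_m(\cdot,t)\rightharpoonup b^{1/(q+r)}|\nabla u|$ in $L^1(\Omega)$ for a.e.\ $t$. It then invokes the Chipot--de Oliveira lower semicontinuity result (Proposition \ref{pro:Chipot-Gildo-2019}) for the modulars with variable exponents $p_m+r$, $q_m+r$.

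\textbf{How you get it.} You feed the pointwise convergence of the whole nonnegative integrand directly into Fatou's lemma. This makes Proposition \ref{pro:Chipot-Gildo-2019} and the weak-convergence step unnecessary. The paper's tool is what you would need if only weak $L^1$ convergence were available. Here a.e.\ convergence of the gradients is already supplied by Lemma \ref{le:strong-grad-1}, so nothing is lost, and your argument is more self-contained.

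\textbf{Points where you are more careful than the paper.}
\begin{itemize}
\item You explicitly take the countable union of exceptional time sets, which the paper leaves implicit.
\item You notice the factor $\frac{1}{r+\overline{s}_m^+}$ in \eqref{eq:est-m}. The paper silently drops it; strictly, one only gets the bound $(r+\overline{s}^+)N$ unless that factor is absorbed into $C$, $C'$, as you propose.
\end{itemize}

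\textbf{One small correction.} No subsequence is needed in Step 1. The a.e.\ convergence in $Q_T$ already gives a.e.\ convergence on almost every time slice by Fubini. It is rather the paper's passage from $L^{\underline{s}^-}(Q_T)$ convergence to slice-wise convergence that implicitly requires one.
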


\begin{proof}
The pointwise convergence \eqref{eq:conv-mod-degenerate} follows from the a.e. in $Q_T$ convergence $\nabla u^{(m)}\to \nabla u$ (Lemma \ref{le:strong-grad-1}) and the uniform convergence $a_m\to a$, $b_m\to b$, $p_m\to p$, $q_m\to q$. Consider the sequences

\[
v_m=a_m^{\frac{1}{p_m+r}}|\nabla u_m|\in L^{p_m+r}(Q_T),\qquad w_m=b_m^{\frac{1}{q_m+r}}|\nabla u_m|L^{q_m+r}(Q_T).
\]
Set $v=a^{\frac{1}{p+r}}|\nabla u|$. The continuous embedding $L^{\underline{s}(\cdot)}(Q_T)\subseteq L^{\underline{s}^-}(Q_T)$ and Lemma \ref{le:strong-grad-1} imply that $v_m(\cdot,t)\to v(\cdot,t)$ in $L^{\underline{s}^-}(\Omega)$ for a.e. $t\in (0,T)$ and, thus, $v_m\rightharpoonup v$ in $L^1(\Omega)$ for a.e. $t\in (0,T)$. For the same reason $w_m\rightharpoonup w\equiv b^{\frac{1}{q+r}}|\nabla u|$ in $L^1(\Omega)$ for a.e. $t\in (0,T)$.
Proposition \ref{pro:Chipot-Gildo-2019} applied to $\{v_m\}$ and $\{w_m\}$ with the exponents $p_m+r$ and $q_m+r$ yields $\mathcal{F}(z,\nabla u)|\nabla u|^{r+2}\in L^1(\Omega)$ for a.e. $t\in (0,T)$, whence \eqref{eq:bound-1}.
\end{proof}

Estimate \eqref{eq:th-2-est} follows now by passing  in \eqref{eq:est-m} to the limit as $m\to \infty$.

By Theorem \ref{th:second-order-reg-1} for every $m\in \mathbb{N}$

\begin{equation}
\label{eq:reg-m}
\begin{split}
{\rm (i)} \quad & a_m|\nabla u_m|^{p_m(z)-1+\frac{r}{2}}, \,b_m|\nabla u_m|^{q_m(z)-1+\frac{r}{2}}\in L^2(0,T;W^{1,2}(\Omega)),
\\
{\rm (ii)} \quad & |\nabla u_m|^{\frac{r}{2}}\mathcal{F}^{(m)}(z,\nabla u_m)\nabla u_m\in L^{2}(0,T;W^{1,2}(\Omega))^N,
\end{split}
\end{equation}
with the norms bounded by a constant independent of $m$. Denote

\[
V_m^{(i)}=|\nabla u_m|^{\frac{r}{2}}\mathcal{F}^{(m)}(z,\nabla u_m)D_i u_m,\qquad V^{(i)}=|\nabla u|^{\frac{r}{2}}\mathcal{F}(z,\nabla u)D_i u,
\]
where $u$ is the strong solution of problem \eqref{eq:main}. By virtue of \eqref{eq:reg-m} there exist a subsequence (which we assume coinciding with $\{u_m\}$) and functions $\eta_i,\xi_i,\zeta_i\in L^{2}(0,T;W^{1,2}(\Omega))$ such that

\[
\begin{split}
& a_m|\nabla u_m|^{p_m(z)-1+\frac{r}{2}}\rightharpoonup \xi_i, \; b_m|\nabla u_m|^{q_m(z)-1+\frac{r}{2}}\rightharpoonup \zeta_i\quad \text{in $L^{2}(0,T;W^{1,2}(\Omega))$},
\\
&
D_jV_m^{(i)}\rightharpoonup \eta_{ij}\quad \text{in $L^{2}(Q_T)$}.
\end{split}
\]
By virtue of the balance condition \eqref{eq:balance-p-q} and the choice of the sequences $p_m$, $q_m$

\[
2(\overline{s}_m(z)-1)+r\leq 2(\underline{s}_m(z)-1)+r+r^\sharp\quad \text{for all sufficiently large $m$}.
\]
It follows then from inequalities \eqref{eq:est-limit-parab} and \eqref{eq:est-m} that

\[
\|V_m^{(i)}\|_{2,Q_T}^2\leq C\left(1+\int_{Q_T}|\nabla u_m|^{2(p_m-1)+r}\,dz +|\nabla u_m|^{2(q_m-1)+r}\,dz\right)\leq C
\]
with an independent of $m$ constant $C$. Hence, $V_m^{(i)}\rightharpoonup \sigma_i$ in $L^2(Q_T)$ for some $\sigma_i$. By virtue of Lemma \ref{le:strong-grad-1} and the uniform convergence $a_m\to a$, $b_m\to b$, $p_m\to p$, $q_m\to q$ it is necessary that $\sigma_i=V^{(i)}$. For every $\phi\in C_0^\infty(Q_T)$

\[
\begin{split}
& \langle D_jV_m^{(i)},\phi\rangle_{2,Q_T} \to \langle \eta_{ij},\phi\rangle_{2,Q_T},
\\
& \langle D_jV_m^{(i)},\phi\rangle_{2,Q_T} = - \langle V_m^{(i)}, D_j\phi\rangle_{2,Q_T}\to - \langle V^{(i)}, D_j\phi\rangle_{2,Q_T},
\end{split}
\]
which means that

\[
\eta_{ij}=D_j\left(|\nabla u|^{\frac{r}{2}}\mathcal{F}(z,\nabla u)D_i u\right),
\]
whence inclusions \eqref{eq:reg-nonsmooth-data}.

\bibliographystyle{elsarticle-num}
\bibliography{Arora-Shmarev-2026-1}

\begin{thebibliography}{10}
\expandafter\ifx\csname url\endcsname\relax
  \def\url#1{\texttt{#1}}\fi
\expandafter\ifx\csname urlprefix\endcsname\relax\def\urlprefix{URL }\fi
\expandafter\ifx\csname href\endcsname\relax
  \def\href#1#2{#2} \def\path#1{#1}\fi

\bibitem{Arora-Shmarev-JGA-2026}
R.~Arora, S.~Shmarev, \href{https://link.springer.com/article/10.1007/s12220-026-02421-0}{Irregular double-phase evolution problem: Existence and global regularity}, Journal of Geometric Analysis 36~(6) (2026).
\newblock \href {https://doi.org/10.1007/s12220-026-02421-0} {\path{doi:10.1007/s12220-026-02421-0}}.
\newline\urlprefix\url{https://link.springer.com/article/10.1007/s12220-026-02421-0}

\bibitem{Zhikov-1986}
V.~V. Zhikov, Averaging of functionals of the calculus of variations and elasticity theory, Izv. Akad. Nauk SSSR Ser. Mat. 50~(4) (1986) 675--710, 877.

\bibitem{Zhikov-1995}
V.~V. Zhikov, On {L}avrentiev's phenomenon, Russian J. Math. Phys. 3~(2) (1995) 249--269.

\bibitem{DeFilippis-2020}
C.~De~Filippis, \href{https://doi.org/10.1007/s00526-020-01822-5}{Gradient bounds for solutions to irregular parabolic equations with {$(p, q)$}-growth}, Calc. Var. Partial Differential Equations 59~(5) (2020) Paper No. 171, 32.
\newblock \href {https://doi.org/10.1007/s00526-020-01822-5} {\path{doi:10.1007/s00526-020-01822-5}}.
\newline\urlprefix\url{https://doi.org/10.1007/s00526-020-01822-5}

\bibitem{RACSAM-2023}
R.~Arora, S.~Shmarev, \href{https://doi.org/10.1007/s13398-022-01346-x}{Existence and regularity results for a class of parabolic problems with double phase flux of variable growth}, Rev. R. Acad. Cienc. Exactas F\'is. Nat. Ser. A Mat. RACSAM 117~(1) (2023) Paper No. 34, 48.
\newblock \href {https://doi.org/10.1007/s13398-022-01346-x} {\path{doi:10.1007/s13398-022-01346-x}}.
\newline\urlprefix\url{https://doi.org/10.1007/s13398-022-01346-x}

\bibitem{Wontae-Kim-JMAA-2025}
W.~Kim, \href{https://doi.org/10.1016/j.jmaa.2025.129593}{Calder\'on-{Z}ygmund type estimate for the singular parabolic double-phase system}, J. Math. Anal. Appl. 551~(1) (2025) Paper No. 129593.
\newblock \href {https://doi.org/10.1016/j.jmaa.2025.129593} {\path{doi:10.1016/j.jmaa.2025.129593}}.
\newline\urlprefix\url{https://doi.org/10.1016/j.jmaa.2025.129593}

\bibitem{Wontae-Kim-NoDeA-2024}
W.~Kim, L.~S\"{a}rki\"{o}, \href{https://doi.org/10.1007/s00030-024-00928-5}{Gradient higher integrability for singular parabolic double-phase systems}, NoDEA Nonlinear Differential Equations Appl. 31~(3) (2024) Paper No. 40, 38.
\newblock \href {https://doi.org/10.1007/s00030-024-00928-5} {\path{doi:10.1007/s00030-024-00928-5}}.
\newline\urlprefix\url{https://doi.org/10.1007/s00030-024-00928-5}

\bibitem{Mingione-Radulescu-2021}
G.~Mingione, V.~Radulescu, \href{https://doi.org/10.1016/j.jmaa.2021.125197}{Recent developments in problems with nonstandard growth and nonuniform ellipticity}, J. Math. Anal. Appl. 501~(1) (2021) Paper No. 125197, 41.
\newblock \href {https://doi.org/10.1016/j.jmaa.2021.125197} {\path{doi:10.1016/j.jmaa.2021.125197}}.
\newline\urlprefix\url{https://doi.org/10.1016/j.jmaa.2021.125197}

\bibitem{Chl-Gw-Gw-Wr-2021}
I.~Chlebicka, P.~Gwiazda, A.~\'Swierczewska-Gwiazda, A.~Wr\'oblewska-Kami\'nska, \href{https://doi.org/10.1007/978-3-030-88856-5}{Partial differential equations in anisotropic {M}usielak-{O}rlicz spaces}, Springer Monographs in Mathematics, Springer, Cham, [2021] \copyright 2021.
\newblock \href {https://doi.org/10.1007/978-3-030-88856-5} {\path{doi:10.1007/978-3-030-88856-5}}.
\newline\urlprefix\url{https://doi.org/10.1007/978-3-030-88856-5}

\bibitem{Iwaniec-1983}
T.~Iwaniec, \href{https://doi.org/10.4064/sm-75-3-293-312}{Projections onto gradient fields and {$L\sp{p}$}-estimates for degenerated elliptic operators}, Studia Math. 75~(3) (1983) 293--312.
\newblock \href {https://doi.org/10.4064/sm-75-3-293-312} {\path{doi:10.4064/sm-75-3-293-312}}.
\newline\urlprefix\url{https://doi.org/10.4064/sm-75-3-293-312}

\bibitem{DiBenedetto-Manfredi-AMJ-1993}
E.~DiBenedetto, J.~Manfredi, \href{https://doi.org/10.2307/2375066}{On the higher integrability of the gradient of weak solutions of certain degenerate elliptic systems}, Amer. J. Math. 115~(5) (1993) 1107--1134.
\newblock \href {https://doi.org/10.2307/2375066} {\path{doi:10.2307/2375066}}.
\newline\urlprefix\url{https://doi.org/10.2307/2375066}

\bibitem{Byun-Kim-2022}
S.-S. Byun, W.~Kim, \href{https://doi.org/10.1007/s00208-020-02089-z}{Global {C}alder\'on-{Z}ygmund estimate for {$p$}-{L}aplacian parabolic system}, Math. Ann. 383~(1-2) (2022) 77--118.
\newblock \href {https://doi.org/10.1007/s00208-020-02089-z} {\path{doi:10.1007/s00208-020-02089-z}}.
\newline\urlprefix\url{https://doi.org/10.1007/s00208-020-02089-z}

\bibitem{Andrade-Bogelein-Duzaar-Moring-2026}
P.~Andrade, V.~B\"{o}gelein, F.~Duzaar, K.~Moring, {C}alder\'on-{Z}ygmund estimates for parabolic p-laplacian systems with non-divergence form right-hand sides, Ricerche mat (2026).
\newblock \href {https://doi.org/https://doi.org/10.1007/s11587-026-01137-1} {\path{doi:https://doi.org/10.1007/s11587-026-01137-1}}.

\bibitem{Kinnunen-Zhou-2001}
J.~Kinnunen, S.~Zhou, A boundary estimate for nonlinear equations with discontinuous coefficients, Differential Integral Equations 14~(4) (2001) 475--492.

\bibitem{Byun-Ok-Ryu-2013}
S.-S. Byun, J.~Ok, S.~Ryu, \href{https://doi.org/10.1016/j.jde.2013.03.004}{Global gradient estimates for general nonlinear parabolic equations in nonsmooth domains}, J. Differential Equations 254~(11) (2013) 4290--4326.
\newblock \href {https://doi.org/10.1016/j.jde.2013.03.004} {\path{doi:10.1016/j.jde.2013.03.004}}.
\newline\urlprefix\url{https://doi.org/10.1016/j.jde.2013.03.004}

\bibitem{Colombo-Mingione-2016}
M.~Colombo, G.~Mingione, \href{https://doi.org/10.1016/j.jfa.2015.06.022}{Calder\'on-{Z}ygmund estimates and non-uniformly elliptic operators}, J. Funct. Anal. 270~(4) (2016) 1416--1478.
\newblock \href {https://doi.org/10.1016/j.jfa.2015.06.022} {\path{doi:10.1016/j.jfa.2015.06.022}}.
\newline\urlprefix\url{https://doi.org/10.1016/j.jfa.2015.06.022}

\bibitem{De-Filippis-Mingione-2020}
C.~De~Filippis, G.~Mingione, \href{https://doi.org/10.1090/spmj/1608}{A borderline case of {C}alder\'on-{Z}ygmund estimates for nonuniformly elliptic problems}, Algebra i Analiz 31~(3) (2019) 82--115.
\newblock \href {https://doi.org/10.1090/spmj/1608} {\path{doi:10.1090/spmj/1608}}.
\newline\urlprefix\url{https://doi.org/10.1090/spmj/1608}

\bibitem{Acerbi-Mingione-2007}
E.~Acerbi, G.~Mingione, \href{https://doi.org/10.1215/S0012-7094-07-13623-8}{Gradient estimates for a class of parabolic systems}, Duke Math. J. 136~(2) (2007) 285--320.
\newblock \href {https://doi.org/10.1215/S0012-7094-07-13623-8} {\path{doi:10.1215/S0012-7094-07-13623-8}}.
\newline\urlprefix\url{https://doi.org/10.1215/S0012-7094-07-13623-8}

\bibitem{Seregin-Acerbi-Mignione-2004}
E.~Acerbi, G.~Mingione, G.~A. Seregin, \href{https://doi-org.uniovi.idm.oclc.org/10.1016/S0294-1449(03)00031-3}{Regularity results for parabolic systems related to a class of non-{N}ewtonian fluids}, Ann. Inst. H. Poincar\'{e} C Anal. Non Lin\'{e}aire 21~(1) (2004) 25--60.
\newblock \href {https://doi.org/10.1016/S0294-1449(03)00031-3} {\path{doi:10.1016/S0294-1449(03)00031-3}}.
\newline\urlprefix\url{https://doi-org.uniovi.idm.oclc.org/10.1016/S0294-1449(03)00031-3}

\bibitem{Crispo-Maremonti-Ruzicka-2019}
F.~Crispo, P.~Maremonti, M.~Rucicka, \href{https://projecteuclid.org/euclid.ade/1556762454}{Global {$L^r$}-estimates and regularizing effect for solutions to the {$p(t,x)$}-{L}aplacian systems}, Adv. Differential Equations 24~(7-8) (2019) 407--434.
\newline\urlprefix\url{https://projecteuclid.org/euclid.ade/1556762454}

\bibitem{Baroni-Bogelin-2014}
P.~Baroni, V.~B\"ogelein, \href{https://doi.org/10.4171/RMI/817}{Calder\'on-{Z}ygmund estimates for parabolic {$p(x,t)$}-{L}aplacian systems}, Rev. Mat. Iberoam. 30~(4) (2014) 1355--1386.
\newblock \href {https://doi.org/10.4171/RMI/817} {\path{doi:10.4171/RMI/817}}.
\newline\urlprefix\url{https://doi.org/10.4171/RMI/817}

\bibitem{Byun-Oh-Wang-2015}
S.-S. Byun, J.~Oh, L.~Wang, \href{https://doi.org/10.1093/imrn/rnu203}{Global {C}alder\'on-{Z}ygmund theory for asymptotically regular nonlinear elliptic and parabolic equations}, Int. Math. Res. Not. IMRN~(17) (2015) 8289--8308.
\newblock \href {https://doi.org/10.1093/imrn/rnu203} {\path{doi:10.1093/imrn/rnu203}}.
\newline\urlprefix\url{https://doi.org/10.1093/imrn/rnu203}

\bibitem{Kim-2025}
W.~Kim, Calder\'on-zygmund type estimate for the parabolic double-phase system, to appear in Ann. Sc. Norm. Super. Pisa Cl. Sci. (2025).

\bibitem{Ant-Zhikov-2005}
S.~Antontsev, V.~Zhikov, Higher integrability for parabolic equations of {$p(x,t)$}-{L}aplacian type, Adv. Differential Equations 10~(9) (2005) 1053--1080.

\bibitem{Zhikov-Past-2010}
V.~V. Zhikov, S.~E. Pastukhova, \href{https://doi.org/10.1134/S0001434610010256}{On the property of higher integrability for parabolic systems of variable order of nonlinearity}, Mat. Zametki 87~(2) (2010) 179--200.
\newblock \href {https://doi.org/10.1134/S0001434610010256} {\path{doi:10.1134/S0001434610010256}}.
\newline\urlprefix\url{https://doi.org/10.1134/S0001434610010256}

\bibitem{Hasto-OK-2021}
P.~H\"{a}st\"{o}, J.~Ok, \href{https://doi.org/10.1016/j.jde.2021.08.012}{Higher integrability for parabolic systems with {O}rlicz growth}, J. Differential Equations 300 (2021) 925--948.
\newblock \href {https://doi.org/10.1016/j.jde.2021.08.012} {\path{doi:10.1016/j.jde.2021.08.012}}.
\newline\urlprefix\url{https://doi.org/10.1016/j.jde.2021.08.012}

\bibitem{Sen-2025}
A.~Sen, \href{https://doi.org/10.1007/s12220-025-01950-4}{Gradient higher integrability for degenerate/singular parabolic multi-phase problems}, J. Geom. Anal. 35~(6) (2025) Paper No. 170, 95.
\newblock \href {https://doi.org/10.1007/s12220-025-01950-4} {\path{doi:10.1007/s12220-025-01950-4}}.
\newline\urlprefix\url{https://doi.org/10.1007/s12220-025-01950-4}

\bibitem{Ar-Sh-2021}
R.~Arora, S.~Shmarev, \href{https://doi.org/10.1016/j.jmaa.2020.124506}{Strong solutions of evolution equations with {$p(x,t)$}-{L}aplacian: existence, global higher integrability of the gradients and second-order regularity}, J. Math. Anal. Appl. 493~(1) (2021) Paper No. 124506, 31.
\newblock \href {https://doi.org/10.1016/j.jmaa.2020.124506} {\path{doi:10.1016/j.jmaa.2020.124506}}.
\newline\urlprefix\url{https://doi.org/10.1016/j.jmaa.2020.124506}

\bibitem{Ar-Sh-2024}
R.~Arora, S.~Shmarev, \href{https://doi.org/10.1515/anona-2024-0016}{Optimal global second-order regularity and improved integrability for parabolic equations with variable growth}, Advances in Nonlinear Analysis 13~(1) (2024) 20240016 [cited 2024-07-07].
\newblock \href {https://doi.org/doi:10.1515/anona-2024-0016} {\path{doi:doi:10.1515/anona-2024-0016}}.
\newline\urlprefix\url{https://doi.org/10.1515/anona-2024-0016}

\bibitem{Arora-Shmarev-JMAA-2025}
R.~Arora, S.~Shmarev, \href{https://doi.org/10.1016/j.jmaa.2025.129582}{Global gradient estimates for solutions of parabolic equations with nonstandard growth}, J. Math. Anal. Appl. 549~(2) (2025) Paper No. 129582, 36.
\newblock \href {https://doi.org/10.1016/j.jmaa.2025.129582} {\path{doi:10.1016/j.jmaa.2025.129582}}.
\newline\urlprefix\url{https://doi.org/10.1016/j.jmaa.2025.129582}

\bibitem{Duzaar-Mignione-Steffen-2011}
F.~Duzaar, G.~Mingione, K.~Steffen, \href{https://doi-org.uniovi.idm.oclc.org/10.1090/S0065-9266-2011-00614-3}{Parabolic systems with polynomial growth and regularity}, Mem. Amer. Math. Soc. 214~(1005) (2011) x+118.
\newblock \href {https://doi.org/10.1090/S0065-9266-2011-00614-3} {\path{doi:10.1090/S0065-9266-2011-00614-3}}.
\newline\urlprefix\url{https://doi-org.uniovi.idm.oclc.org/10.1090/S0065-9266-2011-00614-3}

\bibitem{Feng-Parviainen-Sarsa-2023}
Y.~Feng, M.~Parviainen, S.~Sarsa, A systematic approach on the second order regularity of solutions to the general parabolic {$p$}-{L}aplace equation, Calc. Var. Partial Differential Equations 62~(7) (2023) Paper No. 204.

\bibitem{Berselli-Ruzicka-2022}
L.~C. Berselli, M.~Ruzicka, \href{https://doi.org/10.1007/s00526-022-02247-y}{Natural second-order regularity for parabolic systems with operators having {$(p,\delta)$}-structure and depending only on the symmetric gradient}, Calc. Var. Partial Differential Equations 61~(4) (2022) Paper No. 137, 49.
\newblock \href {https://doi.org/10.1007/s00526-022-02247-y} {\path{doi:10.1007/s00526-022-02247-y}}.
\newline\urlprefix\url{https://doi.org/10.1007/s00526-022-02247-y}

\bibitem{Cianchi-Maz'ya-2019-1}
A.~Cianchi, V.~G. Maz'ya, \href{https://doi-org.uniovi.idm.oclc.org/10.1007/s12220-019-00213-3}{Second-order regularity for parabolic {$p$}-{L}aplace problems}, J. Geom. Anal. 30~(2) (2020) 1565--1583.
\newblock \href {https://doi.org/10.1007/s12220-019-00213-3} {\path{doi:10.1007/s12220-019-00213-3}}.
\newline\urlprefix\url{https://doi-org.uniovi.idm.oclc.org/10.1007/s12220-019-00213-3}

\bibitem{DHHR-2011}
L.~Diening, P.~Harjulehto, P.~H\"ast\"o, M.~Ruzicka, \href{https://doi.org/10.1007/978-3-642-18363-8}{Lebesgue and {S}obolev spaces with variable exponents}, Vol. 2017 of Lecture Notes in Mathematics, Springer, Heidelberg, 2011.
\newblock \href {https://doi.org/10.1007/978-3-642-18363-8} {\path{doi:10.1007/978-3-642-18363-8}}.
\newline\urlprefix\url{https://doi.org/10.1007/978-3-642-18363-8}

\bibitem{HH-2019}
P.~Harjulehto, P.~H\"ast\"o, \href{https://doi.org/10.1007/978-3-030-15100-3}{Orlicz spaces and generalized {O}rlicz spaces}, Vol. 2236 of Lecture Notes in Mathematics, Springer, Cham, 2019.
\newblock \href {https://doi.org/10.1007/978-3-030-15100-3} {\path{doi:10.1007/978-3-030-15100-3}}.
\newline\urlprefix\url{https://doi.org/10.1007/978-3-030-15100-3}

\bibitem{Grisvard-2011}
P.~Grisvard, \href{https://doi.org/10.1137/1.9781611972030.ch1}{Elliptic problems in nonsmooth domains}, Vol.~69 of Classics in Applied Mathematics, Society for Industrial and Applied Mathematics (SIAM), Philadelphia, PA, 2011, reprint of the 1985 original [MR0775683], With a foreword by Susanne C. Brenner.
\newblock \href {https://doi.org/10.1137/1.9781611972030.ch1} {\path{doi:10.1137/1.9781611972030.ch1}}.
\newline\urlprefix\url{https://doi.org/10.1137/1.9781611972030.ch1}

\bibitem{Ar-Shm-RACSAM-2023}
R.~Arora, S.~Shmarev, \href{https://doi.org/10.1007/s13398-022-01346-x}{Existence and regularity results for a class of parabolic problems with double phase flux of variable growth}, Rev. R. Acad. Cienc. Exactas F\'is. Nat. Ser. A Mat. RACSAM 117~(1) (2023) Paper No. 34, 48.
\newblock \href {https://doi.org/10.1007/s13398-022-01346-x} {\path{doi:10.1007/s13398-022-01346-x}}.
\newline\urlprefix\url{https://doi.org/10.1007/s13398-022-01346-x}

\bibitem{Simon-1987}
J.~Simon, \href{https://doi.org/10.1007/BF01762360}{Compact sets in the space {$L^p(0,T;B)$}}, Ann. Mat. Pura Appl. (4) 146 (1987) 65--96.
\newblock \href {https://doi.org/10.1007/BF01762360} {\path{doi:10.1007/BF01762360}}.
\newline\urlprefix\url{https://doi.org/10.1007/BF01762360}

\bibitem{Chipot-Oliveira-2019}
M.~Chipot, H.~B. de~Oliveira, \href{https://doi.org/10.1007/s00208-019-01803-w}{Some results on the {$p(u)$}-{L}aplacian problem}, Math. Ann. 375~(1-2) (2019) 283--306.
\newblock \href {https://doi.org/10.1007/s00208-019-01803-w} {\path{doi:10.1007/s00208-019-01803-w}}.
\newline\urlprefix\url{https://doi.org/10.1007/s00208-019-01803-w}

\end{thebibliography}
\end{document}